\documentclass[11pt]{amsart}


\voffset=5mm
\oddsidemargin=5pt \evensidemargin=5pt
\headheight=9pt     \topmargin=-24pt
\textheight=655pt   \textwidth=463.pt

\usepackage{amssymb,verbatim}
\usepackage{amsmath,amsfonts}
\usepackage[mathscr]{euscript} 
\usepackage{amsthm}
\usepackage{url}
\usepackage{graphicx} 
\usepackage{enumitem} 
\usepackage{hyperref} 
\hypersetup{colorlinks} 
\usepackage{bbm} 
\usepackage{bm} 
\usepackage{mathrsfs} 
\usepackage{cancel} 

\usepackage[colorinlistoftodos]{todonotes}

\RequirePackage{cleveref}
\usepackage{hypcap}
\hypersetup{colorlinks=true, citecolor=darkblue, linkcolor=darkblue}
\definecolor{darkblue}{rgb}{0.0,0,0.7}
\newcommand{\darkblue}{\color{darkblue}}

\definecolor{darkred}{rgb}{0.68,0,0}
\newcommand{\darkred}{\color{darkred}}

\definecolor{darkgreen}{rgb}{0,.38,0}
\newcommand{\darkgreen}{\color{darkgreen}}

\definecolor{magenta}{rgb}{.51, 0, .51}
\newcommand{\magenta}{\color{magenta}}

\newcommand{\defn}[1]{\emph{\darkblue #1}}
\newcommand{\defna}[1]{\emph{\darkred #1}}
\newcommand{\defnb}[1]{\emph{\darkblue #1}}
\newcommand{\defng}[1]{\emph{\darkgreen #1}}
\newcommand{\defnm}[1]{\emph{\magenta #1}}




\setlist[enumerate]{
	label=\textnormal{({\roman*})},
	ref={\roman*}}

\makeatletter
\def\th@plain{%
	\thm@notefont{}
	\itshape 
}
\def\th@definition{%
	\thm@notefont{}
	\normalfont 
}
\makeatother


\newtheorem{thm}{Theorem}[section]
\newtheorem{lemma}[thm]{Lemma}
\newtheorem{claim}[thm]{Claim}
\newtheorem*{claim*}{Claim}
\newtheorem{cor}[thm]{Corollary}
\newtheorem{prop}[thm]{Proposition}
\newtheorem{conj}[thm]{Conjecture}

\theoremstyle{definition}

\newtheorem{rem}[thm]{Remark}

\numberwithin{figure}{section}
\numberwithin{equation}{section}



\def\zz{\mathbb Z}

\def\nz{\mathbb \zz_{\ge 1}}

\def\nn{\mathbb N}

\def\rr{\mathbb R}
\def\qqq{\mathbb Q}

\def\pp{\mathbb P}

\def\sm{\smallsetminus}

\def\Om{\Omega}

\def\la{\lambda}

\def\de{\delta}

\def\al{\alpha}
\def\be{\beta}

\def\vp{\varphi}

\def\cC{\mathcal C}

\def\cF{\mathcal F}

\def\cO{\mathcal O}

\def\cS{\mathcal S}

\def\ssu{\subset}

\def\<{\langle}
\def\>{\rangle}

\def\TU{\text{{\rm TU}}}

\def\rI{ {\text {\rm I} } }

\def\vt{\vartheta}

\def\Q{{\text {\rm Q} } }

\def\0{{\mathbf 0}}

\def\.{\hskip.06cm}
\def\ts{\hskip.03cm}

\def\bz{\zb}

\def\bx{\xb}

\def\ba{\bxa}

\def\bc{\bxc}

\def\bbb{\bxb}

\def\di{{\small{\ts\diamond\ts}}}

\newcommand{\per}{\mathrm{per}}


\def\aM{\textrm{M}}

\def\aN{\textrm{N}}
\def\aNr{\textrm{\em N}}

\def\ag{\rho}
\def\agr{\rho}

\def\.{\hskip.06cm}
\def\ts{\hskip.03cm}

\def\nin{\noindent}

\newcommand{\textsu}[1]{\textup{\textsf{#1}}}

\newcommand{\ComCla}[1]{\textup{\textsu{#1}}}

\newcommand{\sharpP}{\ComCla{\#P}}
\newcommand{\SP}{\ComCla{\#P}}

\newcommand{\GapP}{\ComCla{GapP}}
\newcommand{\GapPP}{\ComCla{GapP}_{\ge 0}}

\newcommand{\Sigmap}{\ensuremath{\Sigma^{{\textup{p}}}}}

\newcommand{\NP}{\ComCla{NP}}
\newcommand{\BPP}{\ComCla{BPP}}
\newcommand{\coNP}{\ComCla{coNP}}
\renewcommand{\P}{\ComCla{P}}
\newcommand{\CeqP}{\ComCla{C$_=$P}}

\newcommand{\PH}{\ComCla{PH}}
\newcommand{\PSPACE}{\ComCla{PSPACE}}

\newcommand{\FP}{\ComCla{FP}}
\newcommand{\PP}{\ComCla{PP}}

\def\SP{\sharpP}
\def\poly{{\P}}
\def\CEP{{\CeqP}}

\def\xxi{\textbf{\textit{x}}}

\def\bbi{\textbf{{\textit{b}}}}

\def\Com{ {\text {\rm Com} } }
\def\lc{ {\textup {\rm lc} } }
\def\uc{ {\textup {\rm uc} } }

\newcommand{\iA}{\textnormal{A}} 
\newcommand{\iB}{\textnormal{B}} 
\newcommand{\iK}{\textnormal{K}} 
\newcommand{\iL}{\textnormal{L}} 
\newcommand{\iM}{\textnormal{M}} 
\newcommand{\iN}{\textnormal{N}} 

\newcommand{\iQ}{\textnormal{Q}} 

\newcommand{\iS}{\textnormal{S}} 
\newcommand{\iV}{\textnormal{V}} 

\newcommand{\Vol}{\textnormal{Vol}} 


\DeclareMathOperator{\cb}{\mathbf{c}} 
\DeclareMathOperator{\Ec}{\mathcal{E}} 


\DeclareMathOperator{\Rb}{\mathbb{R}} 
 %
 %
\DeclareMathOperator{\xb}{\mathbf{x}} 
\DeclareMathOperator{\yb}{\mathbf{y}} 
\DeclareMathOperator{\zb}{\mathbf{z}} 

\DeclareMathOperator{\bxa}{\mathbf{a}} 
\DeclareMathOperator{\bxb}{\mathbf{b}} 
\DeclareMathOperator{\bxc}{\mathbf{c}} 

\DeclareMathOperator{\Nc}{\mathcal{N}}


\DeclareMathOperator{\inc}{\text{inc}} 
\DeclareMathOperator{\comp}{\text{com}} 

\DeclareMathOperator{\EAF}{\textsc{EqualityAF}}
\DeclareMathOperator{\ESta}{\textsc{EqualityStanley}}

\DeclareMathOperator{\aF}{\textnormal{F}}
\def\aFr{\textrm{\em F}}


\title[Equality cases of the Alexandrov--Fenchel inequality]
{Equality cases of the Alexandrov--Fenchel inequality \\ are not in the polynomial hierarchy}
\date{\today}

 \author{Swee Hong Chan}
 \address[Swee Hong Chan]{Department of Mathematics, Rutgers University,  Piscatway, NJ 08854.}
 \email{\texttt{sweehong.chan@rutgers.edu}}

 \author[\ts Igor Pak]{Igor Pak}
 \address[Igor Pak]{Department of Mathematics, UCLA,  Los Angeles, CA 90095.}
 \email{\texttt{pak@math.ucla.edu}}


\begin{document}

\begin{abstract}
Describing the equality conditions of the \emph{Alexandrov--Fenchel inequality}
\ts has been a major open problem for decades.  We prove that in the case
of convex polytopes, this description is not in the polynomial hierarchy
unless the polynomial hierarchy collapses to a finite level.  This is the
first hardness result for the problem, and is a complexity counterpart of
the recent result by Shenfeld and van~Handel \cite{SvH-acta}, which
gave a geometric characterization of the equality conditions.
The proof involves Stanley's \emph{order polytopes} \ts and employs
poset theoretic technology.

\end{abstract}
	
		\keywords{Alexandrov–Fenchel inequality, equality cases, order polytope, stability, polynomial hierarchy, log-concavity, Stanley's inequality, linear extension, continued fraction}
	
	\subjclass[2020]{{05A20, 06A07, 52A40} (Primary); {11A55, 52A39, 68Q15, 68Q17, 68R05}~(Secondary).}
	
\maketitle
	
\vskip-.5cm


\section{Introduction}\label{s:intro}

\subsection{Foreword} \label{ss:intro-foreword}
Geometric inequalities play a central role in convex geometry, probability
and analysis, with numerous combinatorial and algorithmic applications.
The \defna{Alexandrov--Fenchel {\em (AF)}~inequality} \ts lies close to
the heart of convex geometry.  It is one of the deepest and most general
results in the area, generalizing a host of simpler geometric
inequalities such as the \emph{isoperimetric inequality} \ts
and the \emph{Brunn--Minkowski inequality},
see~$\S$\ref{ss:hist-geom}.

The equality conditions for geometric inequalities are just as fundamental
as the inequalities themselves, and are crucial for many applications,
see~$\S$\ref{ss:finrem-equality}.
For simpler inequalities they tend to be straightforward and follow from
the proof.  As the inequalities become more complex, their proofs became
more involved, and the equality cases become more numerous and cumbersome.
This is especially true for the Alexandrov--Fenchel inequality, where
the complete description of the equality cases remain open despite much
effort and many proofs, see~$\S$\ref{ss:hist-AF}.

We use the language and ideas from computational complexity and
tools from poset theory, to prove that the equality cases of the
Alexandrov--Fenchel inequality cannot be explicitly described
for convex polytopes in a certain formal sense.
We give several applications to stability in geometric inequalities
and to combinatorial interpretation of the defect of poset inequalities.
We also raise multiple questions, both mathematical and philosophical,
see Section~\ref{s:finrem}.


\smallskip

\subsection{Alexandrov--Fenchel inequality} \label{ss:intro-main}
Let \. $\iV(\iQ_1,\ldots, \iQ_n)$ \. denote the \emph{mixed volume} \ts
of convex bodies \. $\iQ_1,\ldots,\iQ_n$ \ts in \ts $\rr^n$ (see below).
The \defn{Alexandrov--Fenchel inequality} \ts states that for convex bodies
\ts $\iK, \iL,\iQ_1,\ldots,\iQ_{n-2}$ \ts in $\rr^{n}$, we have:
\begin{equation}\label{eq:AF} \tag{AF}
		\iV\big(\iK,\iL,\iQ_1,\ldots,\iQ_{n-2}\big)^2  \, \geq \, \iV\big(\iK,\iK,\iQ_1,\ldots,\iQ_{n-2}\big)
\.\cdot \.   \iV\big(\iL,\iL,\iQ_1,\ldots,\iQ_{n-2}\big).
\end{equation}

\smallskip

Let polytope \ts $\iK \subset \rr^n$ \ts be defined by a system of inequalities
\. $A \. \xxi \leqslant \bbi$.
We say that \ts $\iK$ \ts is a \ts \defn{$\TU$-polytope} \ts if vector \. $\bbi\in \zz^n$,
and matrix \ts $A$ \ts is  \defn{totally unimodular}, i.e.\ all its minors
have determinants in  \ts $\{0,\pm 1\}$.  Note that all vertices of TU-polytopes
are integral.  Denote by \ts $\EAF$ \ts the \defnm{equality verification
 problem of the Alexandrov--Fenchel inequality}, defined as
 the decision problem whether \eqref{eq:AF}  is an equality.

\smallskip

\begin{thm}[{\rm Main theorem}{}]\label{t:main-AF}
Let \. $\iK, \iL,\iQ_1,\ldots,\iQ_{n-2} \subset \rr^{n}$ \ts be \ts $\TU$-polytopes.
Then the equality verification problem of the Alexandrov--Fenchel inequality~\eqref{eq:AF}
is not in the polynomial hierarchy unless the polynomial hierarchy collapses
to a finite level:
$$\EAF \ts \in \PH \ \ \Longrightarrow \ \ \PH=\Sigmap_m  \quad \ \text{for some} \ \, m\ts.
$$
\end{thm}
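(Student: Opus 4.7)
The plan is to reduce $\EAF$ for $\TU$-polytopes to an equality problem on linear-extension counts of finite posets, and then to establish hardness of the latter via a poset-theoretic encoding. For the reduction, consider Stanley's order polytope: for a finite poset $P$ on $n$ elements with a distinguished element $z$, the order polytope $\cO(P) \subset \rr^n$ is defined by $0 \leq x_i \leq 1$ and $x_i \leq x_j$ for $i \prec_P j$. Its defining matrix is a node--arc incidence matrix of a digraph, hence totally unimodular, so $\cO(P)$ is a $\TU$-polytope. A classical theorem of Stanley expresses certain mixed volumes of $\cO(P)$ (combined with simple auxiliary $0/1$-polytope factors tracking the coordinate of $z$) in terms of the linear-extension counts $N_k(P,z)$ --- the number of linear extensions placing $z$ in position $k$. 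Under this dictionary, \eqref{eq:AF} specializes to \emph{Stanley's inequality} $N_k^2 \geq N_{k-1} N_{k+1}$, and the equality case in \eqref{eq:AF} corresponds verbatim to the equality $N_k^2 = N_{k-1} N_{k+1}$. This yields a polynomial-time reduction $\ESta \leq \EAF$, where $\ESta$ denotes the equality problem for Stanley's inequality.

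The main step is to establish the hardness of $\ESta$. The strategy is to reduce a canonical hard equality problem --- such as the $\CEP$-complete problem of deciding whether two Boolean formulas $\vp$, $\psi$ have the same number of satisfying assignments --- to $\ESta$. From $(\vp,\psi)$ one builds a poset $(P,z)$ in which $N_{k-1}$, $N_k$, $N_{k+1}$ become explicit polynomial-time-computable arithmetic expressions in $\#\vp$ and $\#\psi$, engineered so that $N_k^2 - N_{k-1} N_{k+1} = C \cdot (\#\vp - \#\psi)$ for some positive integer factor $C$. Then $\ESta$ holds precisely when $\#\vp = \#\psi$. The key technical device is the continued-fraction expansion of rationals: any target ratio can be realized as a nested sequence of reciprocals, and these nested operations correspond to iterated poset operations --- ordinal sums, disjoint unions, duals, and chain-gluings --- that independently tune the ratios $N_{k-1}/N_k$ and $N_k/N_{k+1}$. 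Combining such continued-fraction gadgets with ``counting gadgets'' whose linear-extension counts realize $\#\vp$ or $\#\psi$ completes the reduction.

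Putting the two steps together: if $\EAF \in \PH$, then $\ESta \in \PH$, hence $\CEP \subseteq \PH$, and a standard collapse argument --- invoking Toda's theorem and the closure properties of the polynomial hierarchy --- forces $\PH = \Sigmap_m$ for some finite $m$.

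The main obstacle will lie in the reduction of the second step. One must construct gadget posets whose linear-extension counts \emph{simultaneously} realize three prescribed arithmetic relations in $\#\vp$ and $\#\psi$, a significantly more delicate task than producing a single $\sharpP$ value. Ensuring that the continued-fraction gadgets attach to the counting gadgets cleanly, without introducing parasitic lower-order contributions that spoil the exact squared-equality $N_k^2 = N_{k-1} N_{k+1}$, is the combinatorial heart of the argument. This is where the flexibility of continued-fraction expansions, combined with careful poset-theoretic surgery, will be essential.
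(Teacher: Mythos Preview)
Your high-level plan matches the paper's architecture---reduce $\ESta$ to $\EAF$ via order polytopes, then establish hardness of $\ESta$ by encoding a $\SP$-hard counting problem through continued-fraction poset gadgets, and conclude via Toda's theorem. However, there are two concrete gaps in your second step.

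First, your reduction targets the wrong version of Stanley's problem. With a single distinguished element $z$ and no further constraints, you are working with $\ESta_0$, and Shenfeld--van~Handel proved $\ESta_0\in\P$ (the paper extends this to $\ESta_1\in\P$). So no hardness reduction to $\ESta_0$ is possible. The paper's argument necessarily passes through $\ESta_k$ for $k\ge 2$: the reduction goes $\text{\sc FlatLE}_0 \to \ESta_2$ by adjoining a three-chain $u\prec v\prec w$ and pinning $f(u)$ and $f(w)$, which forces the companion structure so that the Stanley defect becomes $(\aM_1-\aM_2)^2$ for two controlled counts $\aM_1,\aM_2$. Note this is a \emph{square}: your proposed identity $N_k^2-N_{k-1}N_{k+1}=C\cdot(\#\vp-\#\psi)$ is impossible as written, since the left side is nonnegative by \eqref{eq:Sta} while the right side can be negative.

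Second, your continued-fraction sketch misses the central obstacle. The gadgets of Kravitz--Sah realize $\rho(P,x)=[a_0;a_1,\ldots,a_s]$ on a poset of size $a_0+\cdots+a_s$. But the target ratios arising from $\SP$ values can have numerator and denominator up to $n!$, and the partial-quotient sum of $A/B$ can be as large as $A$ itself---so a direct encoding yields an exponential-size poset. The paper resolves this with a genuinely nontrivial number-theoretic step (its Verification Lemma): one \emph{guesses} an auxiliary $m$ such that both $m/A'$ and $m/B'$ have partial-quotient sums $O((\log A)^2)$, whose existence follows from the Yao--Knuth average and Markov's inequality. This converts verification of $\rho(P,x)=A/B$ into a single call to a four-poset product-of-ratios equality $\text{\sc QuadRLE}$, which then reduces (via $\text{\sc CRLE}\to\text{\sc FlatLE}_0$) to $\ESta_2$. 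Your proposal does not address how to keep the gadget posets polynomial-size, and without this the reduction fails.
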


\smallskip

Informally, the theorem says that the equality cases of the
Alexandrov--Fenchel inequality \eqref{eq:AF} are \defna{unlikely to have a
description in the polynomial hierarchy}.\footnote{The collapse in the
theorem contradicts standard assumptions in computational complexity.
A conjecture that the collapse does not happen is a strengthening of
the \ts $\P\ne \NP$ \ts conjecture that remains out of reach,
see~$\S$\ref{ss:hist-CA}.}
This is in sharp contrast with other geometric inequalities,
including many special cases of~\eqref{eq:AF},
when the equality cases have an explicit description,
thus allowing an efficient verification (see~$\S$\ref{ss:hist-geom}).

Let us emphasize that constraining to TU-polytopes makes the theorem
stronger rather than weaker.  Indeed, one would hope that the equality
verification problem is easy at least in the case when both vertices
and facets are integral (cf.~$\S$\ref{ss:finrem-polytopes}).  In fact,
we chose the \emph{smallest} \ts natural class of H-polytopes which
contains all order polytopes (see below).

Let us quickly unpack the very strong claim of Theorem~\ref{t:main-AF}.
In particular, the theorem implies that given the polytopes,
the equality in~\eqref{eq:AF} cannot be decided in polynomial time: \ts
\ts $\EAF\notin \P$, nor even in probabilistic polynomial
time: \ts $\EAF\notin\BPP$ \ts (unless \ts $\PH$ \ts collapses).
Moreover, 
there can be no polynomial size certificate which verifies
that \eqref{eq:AF} is an equality: \ts $\EAF\ts\notin \NP$, or
a strict inequality:  \ts  $\EAF\notin \coNP$ \ts (ditto).

Our results can be viewed as a complexity theoretic counterpart of the
\defng{geometric description} \ts
of the Alexandrov--Fenchel inequality that was proved recently by Shenfeld
and van~Handel \cite{SvH-acta}.  In this context, Theorem~\ref{t:main-AF} says
that this geometric description is not computationally effective, and cannot
be made so under standard complexity assumptions.  From this point of view,
the results in \cite{SvH-acta} are optimal, at least for convex polytopes
in the full generality (cf.~$\S$\ref{ss:finrem-meaning}).

\medskip

{\small
\nin
\emph{Warning}: \ts Here we only give statements of the results without
a context.  Our hands are tied by the interdisciplinary nature of the
paper with an extensive background in both convex geometry,
poset theory and computational complexity.
We postpone the definitions until Section~\ref{s:def}, and
the review until Section~\ref{s:hist}.
}

\smallskip

\subsection{Stability} \label{ss:intro-stability}
In particular, Theorem~\ref{t:main-AF} prohibits
certain \defna{stability inequalities}.  In the context of general inequalities,
these results give quantitative measurements of how close are the objects of
study (variables, surfaces, polytopes, lattice points, etc.) to the equality cases
in some suitable sense, when the inequality is close
to an equality, see e.g.\ \cite{Fig13}.

In the context of geometric inequalities, many sharp stability results
appear in the form of \defn{Bonnesen type inequality}, see \cite{Oss79}.
These are defined as the strengthening of a geometric inequality \.
$f\geqslant g$ \. to \. $f-g\geqslant h$, such that \. $h\geqslant 0$,
and \. $h=0$ \. \underline{if and only if} \. $f=g$.\footnote{Following~\cite{Oss79},
function $h$ should also have a (not formally defined) ``geometric description''. }
They are named after the celebrated extension of the \defng{isoperimetric
inequality} \ts by Bonnesen (see~$\S$\ref{ss:hist-stab}).

While there are numerous Bonnesen type inequalities of various strength
for the Brunn--Minkowski inequalities and their relatives,
the case of Alexandrov--Fenchel inequality~\eqref{eq:AF}
remains unapproachable in full generality.  Formally, define the
\defn{Alexandrov--Fenchel defect} \. as:
{\small
\begin{equation*}\label{eq:AF-stab} 
\de\big(\iK,\iL,\iQ_1,\ldots,\iQ_{n-2}\big)  \. := \.
		\iV\big(\iK,\iL,\iQ_1,\ldots,\iQ_{n-2}\big)^2 \ts - \ts
\iV\big(\iK,\iK,\iQ_1,\ldots,\iQ_{n-2}\big) \cdot
\ts \iV\big(\iL,\iL,\iQ_1,\ldots,\iQ_{n-2}\big).
\end{equation*}}

\nin
One would want to find a bound of the form \. $\de(\cdot) \geqslant \xi(\cdot)$, where \ts
$\xi$ \ts is a nonnegative computable function of the polytopes.
The following result is an easy corollary from the proof of Theorem~\ref{t:main-AF}.

\smallskip

\begin{cor}\label{c:main-AF-stab}
Suppose \. $\de\big(\iK,\iL,\iQ_1,\ldots,\iQ_{n-2}\big)  \geqslant
\xi\big(\iK,\iL,\iQ_1,\ldots,\iQ_{n-2}\big)$ \.
is a Bonnesen type inequality, such that \ts $\xi$ \ts is computable
in polynomial time on all \ts $\TU$-polytopes.  Then \ts $\PH=\NP$.
\end{cor}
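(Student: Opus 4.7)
The plan is to deduce from the hypothesis that $\EAF \in \P$, and then to invoke (the proof of) Theorem~\ref{t:main-AF}.

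For the first step, I use the defining features of a Bonnesen-type bound: $\xi \geqslant 0$, and $\xi = 0$ precisely when $\delta = 0$, i.e., precisely when \eqref{eq:AF} is an equality. If $\xi$ is computable in polynomial time on TU-polytopes (in a representation where zero-testing is itself polynomial, e.g., a rational number in binary), then deciding $\EAF$ reduces to computing $\xi(\iK, \iL, \iQ_1, \ldots, \iQ_{n-2})$ and checking whether the result vanishes. This places $\EAF \in \P$.

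For the second step, I apply Theorem~\ref{t:main-AF}. Since $\P \subseteq \PH$, the theorem yields $\PH = \Sigmap_m$ for some $m$. To sharpen this to $\PH = \NP$, I would unpack the proof of the main theorem: the phrasing ``easy corollary from the proof of Theorem~\ref{t:main-AF}'' suggests that the proof must furnish hardness strong enough that $\EAF \in \P$ collapses the hierarchy all the way down. The natural mechanism is that the proof produces a polynomial-time reduction from a $\coNP$-hard (or $\SP$-hard) problem to $\EAF$; then $\EAF \in \P$ would force $\P = \NP$ --- directly in the $\coNP$-hard case, or via Toda's theorem in the $\SP$-hard case --- so that $\PH = \P = \NP$.

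The main obstacle lies not in the corollary itself but in verifying that the proof of Theorem~\ref{t:main-AF} supplies a reduction of sufficient strength to collapse $\PH$ to $\NP$ when $\EAF$ is polynomial-time decidable. Once that is in hand, the corollary is a formal consequence: the Bonnesen hypothesis merely converts a quantitative stability bound into a polynomial-time decision procedure for the equality case of \eqref{eq:AF}, which then clashes with the hardness built into the main theorem's proof.
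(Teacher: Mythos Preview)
Your first step matches the paper exactly: the Bonnesen condition \ts $\xi=0 \Leftrightarrow \delta=0$ \ts together with $\xi \in \FP$ gives $\EAF \in \P$.

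Your second step has the right instinct --- go into the proof of the main theorem --- but the mechanism you speculate about is not the one the paper actually supplies. The proof does \emph{not} establish that $\EAF$ is $\coNP$-hard or $\SP$-hard under polynomial-time reductions. What the proof of Theorem~\ref{t:main-Sta} establishes (equations \eqref{eq:together-LE}--\eqref{eq:together-verify}), combined with Proposition~\ref{p:AF-Sta}, is the oracle containment
\[
\PH \ \subseteq \ \NP^{\<\ESta_2\>} \ \subseteq \ \NP^{\<\EAF\>}.
\]
The outer $\NP$ here is essential and cannot be replaced by~$\P$: the Verification Lemma~\ref{l:Verify-Quart} uses nondeterministic guessing (of the integer $m$ from Proposition~\ref{p:NTD}), so the passage from {\sc \#RLE} to {\sc QuadRLE} is not a polynomial-time reduction. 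Once $\EAF \in \P$, the paper simply reads off
\[
\PH \ \subseteq \ \NP^{\<\EAF\>} \ \subseteq \ \NP^{\P} \ = \ \NP,
\]
without ever passing through $\P=\NP$.

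So your overall structure is right and would yield a valid proof once the correct ingredient is plugged in, but the specific hardness you guessed (a polynomial-time reduction from a $\coNP$- or $\SP$-hard problem) is not what is available; the correct ingredient is the containment $\PH \subseteq \NP^{\<\EAF\>}$.
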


\smallskip

Informally, the corollary implies that for the stability of the
AF~inequality, one should either avoid polytopes altogether
and require some regularity conditions for the convex bodies
(as has been done in the past, see~$\S$\ref{ss:hist-stab}), or be content
with functions \ts $\xi$ \ts which are hard to compute (such inequalities
can still be very useful, of course).  See~$\S$\ref{ss:finrem-mass-transport}
for further implications.

To understand how the corollary follows from the proof of Theorem~\ref{t:main-AF},
the Bonnesen condition in this case states that \. $\xi(\cdot) = 0$ \.
\underline{if and only if} \. $\de(\cdot) = 0$.
Thus, the equality \. $\{\de(\cdot) =^? 0\}$ \. can be decided in
polynomial time on TU-polytopes, giving the assumption in the theorem.

\smallskip

\subsection{Stanley inequality} \label{ss:intro-Stanley}
We restrict ourselves to a subset of TU-polytopes
given by the \defng{order polytopes} \ts (see~$\S$\ref{ss:hist-poset-polytopes}).
Famously, Stanley showed in~\cite{Sta-AF}, that
the Alexandrov--Fenchel inequality applied to certain such polytopes
gives the \defna{Stanley inequality}, that the numbers of certain linear
extensions of finite posets form a log-concave sequence. This inequality
is of independent interest in order theory (see~$\S$\ref{ss:hist-LE}),
and is the starting point of our investigation.

\smallskip

Let \. $P=(X,\prec)$ \. be a poset with \. $|X|=n$ \. elements.
Denote \. $[n]:=\{1,\ldots,n\}$.
A \defn{linear extension} of $P$ is a bijection \. $f: X \to [n]$,
such that
\. $f(x) < f(y)$ \. for all \. $x \prec y$.
Denote by \ts $\Ec(P)$ \ts the set of linear extensions of $P$,
and let \. $e(P):=|\Ec(P)|$.

Let \. $x,z_1,\ldots,z_k\in X$ \. and \. $a,c_1,\ldots,c_k\in [n]$;
we write \. $\bz =(z_1,\ldots,z_k)$ \. and \. $\bc =(c_1,\ldots,c_k)$, and we assume without loss of generality that \. $c_1<\ldots< c_k$\..
Let \.
$\Ec_{\bz\bc}(P,x,a)$ \. be the set of linear extensions \. $f\in \Ec(P)$,
such that \. $f(x)=a$ \. and \. $f(z_i)=c_i$ \. for all \. $1\le i \le k$.
Denote by \. $\aN_{\bz\bc}(P, x,a):=\bigl|\Ec_{\bz\bc}(P,x,a)\bigr|$ \.
the number of such linear extensions. The \defn{Stanley inequality}~\cite{Sta-AF} states that
the sequence \. $\big\{\aN_{\bz\bc}(P, x,a), 1\le a \le n\big\}$ \. is \defng{log-concave}:
\begin{equation}\label{eq:Sta}\tag{Sta}
\aN_{\bz\bc}(P, x,a)^2 \, \ge \, \aN_{\bz\bc}(P, x,a+1) \.\cdot \.  \aN_{\bz\ts\bc}(P, x,a-1).
\end{equation}

\smallskip

The problem of finding the equality conditions for \eqref{eq:Sta} was
first asked by Stanley in the original paper \cite[$\S$3]{Sta-AF}, see also
\cite[Question~6.3]{BT02}, \cite[$\S$9.9]{CPP-effective} and \cite{MS22}.
Formally, for every \ts $k\ge 0$, denote by \ts $\ESta_k$ \ts the
\defnm{equality verification problem of the Stanley inequality \ts
with $k$ fixed elements},
defined as the decision problem whether \eqref{eq:Sta}  is an equality.
It was shown by Shenfeld and van Handel that \ts  $\ESta_0 \in \poly$, see
 \cite[Thm~15.3]{SvH-acta}.

 \smallskip

\begin{thm}\label{t:main-Sta}
Let \ts $k\ge 2$.
Then the equality verification problem of the Stanley inequality \eqref{eq:Sta}
is not in the polynomial hierarchy unless the polynomial hierarchy collapses
to a finite level:
$$\ESta_k \ts \in \PH \ \ \Longrightarrow \ \ \PH=\Sigmap_m  \quad \ \text{for some} \ \, m\ts.
$$
\end{thm}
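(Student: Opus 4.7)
The plan is to prove Theorem~\ref{t:main-Sta} by a polynomial-time many-one reduction from a canonical $\CeqP$-complete problem to $\ESta_k$. Once $\ESta_k$ is shown to be $\CeqP$-hard, the conclusion follows from standard complexity theory: by (a consequence of) Toda's theorem, a $\CeqP$-hard problem lies in $\PH$ only if $\PH$ collapses to a finite level, which is exactly the contrapositive form stated in the theorem.

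For the source problem I would take: given two finite posets $P_1, P_2$, decide whether \. $e(P_1) = e(P_2)$. Since counting linear extensions is $\SP$-complete (Brightwell--Winkler), this equality problem is $\CeqP$-complete under polynomial-time many-one reductions, and it lives naturally in the poset world where the Stanley inequality is formulated.

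The main technical step is a \emph{gadget construction}: given $P_1$ and $P_2$, build a poset $P$ with a distinguished element $x$ and two fixed elements $\bz=(z_1, z_2)$ at positions $\bc=(c_1, c_2)$, together with a target position $a$ (with $c_1 < a < c_2$ and $a \notin \{c_1 \pm 1, c_2 \pm 1\}$), such that the Stanley equality at $(x,a)$ holds if and only if $e(P_1) = e(P_2)$. The guiding idea is that fixing two positions $z_1 \mapsto c_1$ and $z_2 \mapsto c_2$ partitions the remaining elements into three \emph{slabs} (the images of positions below $c_1$, strictly between $c_1$ and $c_2$, and above $c_2$), and within any consistent slab assignment the linear-extension count factors as a product of three independent counts. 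By planting (copies of) $P_1$ and $P_2$ in separate slabs and arranging the local cover relations near $x$ so that shifting $x$ by $\pm 1$ pushes a single element across a slab boundary, each of the three quantities $\aN_{\bz\bc}(P,x,a{-}1)$, $\aN_{\bz\bc}(P,x,a)$, $\aN_{\bz\bc}(P,x,a{+}1)$ can be written as an explicit low-degree polynomial in $e(P_1)$ and $e(P_2)$, tuned so that the algebraic identity $\aN(a)^2 = \aN(a{-}1)\ts\aN(a{+}1)$ is equivalent to $e(P_1) = e(P_2)$ on nonnegative integers (for instance, by engineering the three quantities to reduce the Stanley equality to $e(P_1)^2 = e(P_2)^2$, or a similarly sharp identity whose only nonnegative-integer root is the equality of interest).

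The hard part will be the gadget: one must guarantee that (a)~each count factors into the prescribed product form, (b)~these factorizations are correlated so that $\aN(a)^2 / (\aN(a{-}1)\ts\aN(a{+}1))$ collapses to a single explicit rational expression in $e(P_1), e(P_2)$, and (c)~this expression equals $1$ \emph{iff} $e(P_1) = e(P_2)$, with no spurious solutions. This is precisely where $k \geq 2$ is used: one fixed element gives only two slabs, which is not enough to independently host $P_1$, $P_2$, and the moving element $x$ in a decoupled way. I expect the poset-theoretic tools hinted at in the keywords---continued-fraction realizations of rational ratios of linear-extension counts together with the multiplicativity of $e(\cdot)$ under disjoint unions---to be the key ingredients that permit the fine-tuned combinatorial identities needed for the reduction, while keeping the construction polynomial-size and yielding an honest order polytope (so that Corollary~\ref{t:main-AF} and the subsequent TU-polytope statement follow).
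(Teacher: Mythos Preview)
Your high-level instinct---reduce a hard equality-of-counts problem to $\ESta_k$ and invoke Toda---is correct in spirit, but the concrete plan you outline is not what the paper does, and as stated it aims for something the paper leaves open.

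\textbf{The gap.} You propose a polynomial-time many-one reduction from $\{e(P_1)=^? e(P_2)\}$ to $\ESta_k$, which would establish that $\ESta_k$ is $\CeqP$-hard. The paper does \emph{not} prove this; it is stated there as an open problem (Conjecture~\ref{conj:Sta-CEP}). So your gadget, even if you made it precise, would be proving a strictly stronger statement. The difficulty is exactly the one you flag but underestimate: the natural constructions that embed two posets into a single Stanley-type instance encode \emph{ratios} $\rho(P,x)=e(P)/e(P-x)$, not the raw values $e(P_1),e(P_2)$. Getting the Stanley defect to vanish iff $e(P_1)=e(P_2)$ (or $e(P_1)^2=e(P_2)^2$) on the nose, with no spurious roots, is precisely the step no one knows how to do.

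\textbf{What the paper actually does.} The argument is via a chain of polynomial \emph{Turing} reductions, and crucially uses an $\NP$ oracle machine rather than a many-one reduction. The chain is
\[
\text{\sc QuadRLE} \ \longrightarrow\ \text{\sc CRLE} \ \longrightarrow\ \text{\sc FlatLE}_0 \ \longrightarrow\ \ESta_2 \ \longrightarrow\ \ESta_k\,,
\]
where $\text{\sc CRLE}$ asks whether $\rho(P,x)=\rho(Q,y)$ and $\text{\sc QuadRLE}$ asks whether $\rho(P_1,x_1)\rho(P_2,x_2)=\rho(P_3,x_3)\rho(P_4,x_4)$. The continued-fraction machinery you mention is \emph{not} used to build the $\ESta$ gadget; it is used in the opposite direction, in the \emph{Verification Lemma}, to show $\NP^{\langle\text{\sc VerRLE}\rangle}\subseteq \NP^{\langle\text{\sc QuadRLE}\rangle}$. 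Here one \emph{guesses} an integer $m$ whose continued-fraction expansions modulo two given denominators are both short (such $m$ exists by a Yao--Knuth density estimate, but is not known to be findable deterministically in polynomial time), and then realizes the target rational $A/B$ as a product of three $\rho$-values of polynomial-size posets. This $\NP$ guess is the step that blocks a many-one reduction. Combined with Toda's theorem via $\PH\subseteq \P^{\SP}\subseteq \NP^{\langle\text{\sc VerRLE}\rangle}$, one gets $\PH\subseteq \NP^{\langle\ESta_k\rangle}$, which yields the collapse if $\ESta_k\in\Sigmap_m$.

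In short: your ``slab'' picture is close to how $\text{\sc CRLE}\to\text{\sc FlatLE}_0\to\ESta_2$ works, but the source problem must be a ratio coincidence, and closing the loop back to $\SP$-hardness requires an $\NP$ machine and the number-theoretic verification step, not a direct gadget encoding $e(P_1)=e(P_2)$.
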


\smallskip

In fact, the proof of Theorem~\ref{t:main-Sta} shows that, if \. $\ESta_k \ts \in \Sigmap_m$ \. for some $m$, then $\PH=\Sigmap_{m+1}$ (i.e. collapse to the $(m+1)$-th level).
In Section~\ref{s:AF}, we deduce Theorem~\ref{t:main-AF} from Theorem~\ref{t:main-Sta}.
For the proof, any fixed~$k$ in~\eqref{eq:Sta} suffices, of course.  In the opposite
direction, we prove the following extension of the Shenfeld and van Handel's result
mentioned above:

\smallskip

\begin{thm}\label{t:ESta-1} \.
$\ESta_1 \ts \in \ts \P$.
\end{thm}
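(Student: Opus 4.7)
The plan is to extend the Shenfeld--van~Handel proof that $\ESta_0 \in \P$ to the case of one pinning constraint. First, I would dispose of the trivial subcases: if $x = z$ then $\aN_{z,c}(P,x,a)$ vanishes unless $a = c$, and if $a \in \{c-1,c,c+1\}$ the boundary has to be handled by direct case analysis. Using the order-reversal symmetry of the Stanley inequality, I may then assume that the three positions $a-1, a, a+1$ all lie strictly below $c$ (the case strictly above is symmetric).

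Next, write $D := \{y \in X : y \prec z\}$, $U := \{y \in X : y \succ z\}$, and $I := X \smallsetminus (D \cup \{z\} \cup U)$. Fixing $f(z) = c$ forces the first $c-1$ positions to hold all of $D$ together with a subset $T \subseteq I$ of size $c-1-|D|$, and the remaining positions to hold $U \cup (I \smallsetminus T)$. When $x \in D$, this yields
\[
\aN_{z,c}(P, x, a) \;=\; \sum_{T \subseteq I,\; |T| = c-1-|D|} \aN\bigl(P[D \cup T],\, x,\, a\bigr)\cdot e\bigl(P[U \cup (I \smallsetminus T)]\bigr),
\]
with analogous formulas when $x \in I$ (requiring $x \in T$) or $x \in U$. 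Only the first factor depends on $a$, so $a \mapsto \aN_{z,c}(P,x,a)$ is a nonnegative combination of $k=0$ Stanley sequences indexed by $T$.

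The equality $\aN_{z,c}(P,x,a)^2 = \aN_{z,c}(P,x,a-1)\cdot \aN_{z,c}(P,x,a+1)$ corresponds to a specific equality case of the Alexandrov--Fenchel inequality for a mixed volume built from order polytopes sliced by the hyperplane $\{f(z)=c\}$. My plan is to adapt the Shenfeld--van~Handel geometric characterization of Alexandrov--Fenchel equality (which underpins their $\ESta_0 \in \P$ result) to this sliced system. I expect the characterization to translate into a combinatorial condition on $(P,x,z,a,c)$ describing the joint structure of the ``critical chain'' around $x$ and the element $z$, together with a compatibility condition between them, all verifiable on $P$ directly.

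The main obstacle is confirming that this sliced SvH characterization remains polynomial-time decidable even though the decomposition involves exponentially many admissible subsets $T$. I anticipate the argument to hinge on the fact that a single pinning constraint still localizes the equality structure, so the joint condition can be read off from the structure of $P$ around $x$ and $z$ without enumerating~$T$; with two or more pinning constraints (cf. Theorem~\ref{t:main-Sta}) this localization fails, enabling the encoding of hard combinatorial problems and placing $\ESta_k$ outside the polynomial hierarchy for $k \ge 2$.
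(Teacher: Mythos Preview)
Your proposal is a plan rather than a proof, and the gap you identify as ``the main obstacle'' is real and unresolved. The decomposition of $\aN_{z,c}(P,x,a)$ as a sum over admissible subsets $T$ is essentially correct (with the caveat that $D \cup T$ must be a down-set of $P$, not merely a set of the right size), but it does not reduce the $k=1$ equality problem to a collection of $k=0$ problems. A nonnegative combination of log-concave sequences need not be log-concave, so the log-concavity of the sum is not inherited from the summands; it comes from a \emph{single} Alexandrov--Fenchel inequality applied to the sliced polytopes, and the equality case is governed by that one AF instance, not by the per-$T$ equality cases. There is no evident mechanism for collapsing the exponential sum into a local criterion, and your hope that ``a single pinning constraint still localizes the equality structure'' is precisely the content of the theorem, not a step toward it.

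The paper's route is different and does not pass through your decomposition. It invokes the Ma--Shenfeld classification into subcritical, critical, and supercritical posets together with their equality criteria (Lemmas~\ref{lem:crit-cond} and~\ref{lem:supercrit-cond}). For supercritical posets the equality condition is already the local statement $(\divideontimes)$: both companions of $x$ are incomparable to $x$ in every relevant linear extension. The new work (Lemma~\ref{lem:k=1}) shows that for $k=1$ the \emph{critical} case also forces $(\divideontimes)$, via a promotion/demotion argument that exploits the numerical constraint a single splitting pair imposes (Lemma~\ref{lem:crit-prop}). Once $(\divideontimes)$ is established, it is rewritten as at most $6n$ vanishing conditions $\aN_{zc}(P,x,a',y,b')=0$, each decidable in $\P$ by the Daykin--Daykin criterion. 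The crux is therefore a poset-combinatorial argument inside the Ma--Shenfeld framework, not a re-derivation of the Shenfeld--van~Handel geometric characterization for sliced polytopes.
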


\smallskip

Together, Theorems~\ref{t:main-Sta} and~\ref{t:ESta-1} complete the analysis of
equality cases of Stanley's inequality. 


\smallskip

\subsection{Combinatorial interpretation} \label{ss:intro-combin}
The problem of finding a \defna{combinatorial interpretation} \ts is
fundamental in both enumerative and algebraic combinatorics, and was
the original motivation of this investigation (see $\S$\ref{ss:hist-combin-int}).
Although very different in appearance and technical details, there are
certain natural parallels with the stability problems discussed above.

\smallskip

Let \ts $f\geqslant g$ \ts be an inequality between two counting
functions \ts $f,g \in \SP$.  We say that \ts $(f-g)$ \ts has a
\defn{combinatorial interpretation}, if \ts $(f-g) \in \SP$.
While many combinatorial inequalities have a combinatorial interpretation,
for the Stanley inequality~\eqref{eq:Sta} this is an open problem.  Formally,
let
\begin{equation*}
\Phi_{\bz\bc}(P, x,a) \, := \, \aN_{\bz\bc}(P, x,a)^2 \. -
\. \aN_{\bz\bc}(P, x,a+1) \cdot  \aN_{\bz\bc}(P, x,a-1)
\end{equation*}
denote the defect in~\eqref{eq:Sta}.
Let \ts $\phi_k: \big(P, X^{k+1}, [n]^{k+1}\big) \to \nn$ \ts
be the function computing \ts
$\Phi_{\bz\bc}(P, x,a)$. 

\smallskip

\begin{cor} \label{c:main-Stanley-not-SP}
For all \ts $k\ge 2$, function \ts $\phi_k$ \ts does not have a
combinatorial interpretation unless the polynomial hierarchy collapses
to the second level:
$$\phi_k \in \SP  \ \ \Longrightarrow \ \ \PH=\Sigmap_2\ts.
$$
\end{cor}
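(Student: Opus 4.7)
The plan is to deduce the corollary directly from Theorem~\ref{t:main-Sta}. The hypothesis \. $\phi_k \in \SP$ \. immediately places the defect-is-zero problem \. $\ESta_k$ \. at a low level of the polynomial hierarchy, and the refined version of Theorem~\ref{t:main-Sta} then forces the collapse to \. $\Sigmap_2$.

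First I would unpack the hypothesis \. $\phi_k \in \SP$. By definition, there is a polynomial-time nondeterministic Turing machine \. $M$ \. such that on every input \. $I = (P,\bz,\bc,x,a)$, the number of accepting paths of \. $M(I)$ \. equals \. $\Phi_{\bz\bc}(P,x,a)$. By Stanley's inequality the defect is nonnegative, so \eqref{eq:Sta} holds with equality on $I$ if and only if \. $M(I)$ \. has no accepting path. The complement of this condition is visibly in \. $\NP$, so \. $\overline{\ESta_k} \in \NP$ \. and equivalently \. $\ESta_k \in \coNP$.

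Next I would invoke the refined form of Theorem~\ref{t:main-Sta}. Read symmetrically, it states that if \. $\overline{\ESta_k} \in \Sigmap_m$ \. for some \. $m$, then \. $\PH = \Sigmap_{m+1}$. Combined with \. $\overline{\ESta_k} \in \NP = \Sigmap_1$ \. from the previous step, the \. $m=1$ \. case yields \. $\PH = \Sigmap_2$, which is exactly the conclusion of the corollary.

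The one delicate point is justifying the symmetric reading of the refinement of Theorem~\ref{t:main-Sta}. Concretely, one must check that the reduction underlying the proof of Theorem~\ref{t:main-Sta} actually establishes \. $\Sigmap_m$-hardness of \. $\overline{\ESta_k}$ \. (equivalently, \. $\Pi_m^p$-hardness of \. $\ESta_k$), and not only the one-sided form written down above. This is the main obstacle, but only a mild one: the typical structure of \. $\SP$- and \. $\GapP$-style hardness reductions used to prove collapse of the polynomial hierarchy is self-dual under complementation of the gap, so the symmetry should follow from the same construction with at most cosmetic changes, and no new ideas are required.
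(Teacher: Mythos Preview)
Your argument is correct and follows the same route as the paper, but you have made the final step more complicated than it needs to be. The ``delicate point'' you flag is not an obstacle at all: the proof of Theorem~\ref{t:main-Sta} actually establishes the oracle inclusion \. $\PH \subseteq \NP^{\<\ESta_k\>}$ \. (this is \eqref{eq:together-collapse}), and an oracle for a language is trivially equivalent to an oracle for its complement. Hence once you have \. $\ESta_k \in \coNP$, you get \. $\PH \subseteq \NP^{\<\ESta_k\>} \subseteq \NP^{\coNP} = \Sigmap_2$ \. immediately, with no need to invoke a ``symmetric reading'' of the refined theorem or to argue about self-duality of the underlying reductions. This is precisely the paper's two-line proof.
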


\smallskip

To see some context behind this result, note that \. $\aN_{\bz\bc}(P, x,a)\in \SP$ \.
by definition, so \. $\phi_k \in \GapPP$, a class of nonnegative functions in \ts
$\GapP:=\SP-\SP$.  We currently know very few functions which are in \ts $\GapPP$ \ts
but not in~$\ts \SP$.  The examples include
\begin{equation}\label{eq:maltese} \tag{\text{$\circledast$}}
\big(\text{\#3SAT}(F) \ts - \ts 1 \big)^2, \quad
\big(\text{\#2SAT}(F) \ts - \ts \text{\#2SAT}(F')\big)^2 \quad \text{and} \quad
\big(e(P)-e(P')\big)^2,
\end{equation}
where \ts $F,F'$ \ts are CNF Boolean formulas and \ts $P,P'$ \ts are posets \cite{CP23,IP22}.
In other words, all three functions in \eqref{eq:maltese} \emph{do not} \ts have a combinatorial
interpretation (unless $\PH$ collapses).
The corollary provides the first \emph{natural} \ts example of a defect function
that is \ts $\GapPP$ \ts but not in~$\ts \SP$.

The case \ts $k=0$, whether \. $\phi_0 \in \SP$, is especially interesting
and remains a challenging open problem, see \cite[$\S$9.12]{CPP-effective}
and \cite[Conj.~6.3]{Pak-OPAC}.
The corollary suggests that Stanley's inequality~\eqref{eq:Sta} is unlikely
to have a direct combinatorial proof, see~$\S$\ref{ss:finrem-injective}.

To understand how the corollary follows from the proof of Theorem~\ref{t:main-Sta},
note that \ts $\phi_2 \in \SP$ \ts implies that there is a polynomial
certificate for the Stanley inequality being strict.
In other words, we have \ts $\ESta_2 \in \coNP$, giving the assumption in the theorem.

\smallskip

\subsection*{Structure of the paper}
We begin with definitions and notation in Section~\ref{s:def}, followed
by the lengthy background and literature review in Section~\ref{s:hist}
(see also~$\S$\ref{ss:finrem-hist}).  In the key
Section~\ref{s:roadmap}, we give proofs of Theorems~\ref{t:main-AF}
and~\ref{t:main-Sta}, followed by proofs of Corollaries~\ref{c:main-AF-stab}
and~\ref{c:main-Stanley-not-SP}.  These results are reduced to several
independent lemmas, which are proved one by one in Sections~\ref{s:AF}--\ref{s:verify}.
We prove Theorem~\ref{t:ESta-1} in Section~\ref{s:Sta1}.   This section
is independent of the previous sections (except for notation in~$\S$\ref{ss:main-MS}).
We conclude with extensive final remarks and open problems in Section~\ref{s:finrem}.

\medskip

\section{Definitions and notation}\label{s:def}

\subsection{General notation}\label{s:def-gen}
Let \ts $[n]=\{1,\ldots,n\}$, \ts $\nn=\{0,1,2,\ldots\}$ \ts and \ts $\nz=\{1,2,\ldots\}$.
For a subset \ts $S\subseteq X$ \ts and element \ts $x\in X$,
we write \ts $S+x:=S\cup \{x\}$ \ts and \ts $S-x:=S\sm\{x\}$.
For a sequence \ts $\ba =(a_1,\ldots,a_m)$, denote
\ts $|\ba| := a_1 + \ldots + a_m$\ts.  This sequence is \defn{log-concave}, if
\. $a_i^2\ge a_{i-1} \ts a_{i+1}$ \. for all \ts $1< i < m$.

\smallskip

\subsection{Mixed volumes} \label{ss:def-mixed}
Fix \ts $n \geq 1$.
For two sets \. $A, B \subset \Rb^n$ \. and constants \ts $\al,\be>0$, denote by
\[ \al A \. + \. \be B \, := \, \bigl\{ \ts \al\xb \. + \.  \be \yb  \, : \, \xb \in A,  \. \yb \in B  \ts \bigr\}
\]
the \defnb{Minkowski sum} of these sets.
For a  convex body \ts $\iK \subset \Rb^n$ \ts with affine dimension $d$, denote by \ts $\Vol_d(\iK)$ \ts the
volume of~$\iK$.  We drop the subscript when \ts $d=n$.

One of the basic result in convex geometry is \defn{Minkowski's theorem}, see e.g.~\cite[$\S$19.1]{BZ-book},
that the volume of convex bodies with affine dimension $d$ behaves as a homogeneous polynomial
of degree~$d$ with nonnegative coefficients:

\smallskip

\begin{thm}[{\rm Minkowski}{}]\label{thm:Minkowski}
	For all convex bodies \. $\iK_1, \ldots, \iK_r \subset \Rb^n$ \. and \. $\lambda_1,\ldots, \lambda_r > 0$,
	we have:
	\begin{equation}\label{eq:mixed volume definition}
		\Vol_d(\lambda_1 \iK_1+ \ldots + \lambda_r \iK_r) \ =  \ \sum_{1 \ts \le \ts i_1\ts ,\ts \ldots \ts , \ts i_d\ts \le \ts r} \. \iV\bigl(\iK_{i_1},\ldots, \iK_{i_d}\bigr) \, \lambda_{i_1} \ts\cdots\ts \lambda_{i_d}\.,
	\end{equation}
	where the functions \ts $\iV(\cdot)$ \ts are nonnegative and symmetric, and where $d$ is the affine dimension of \ts $\lambda_1 \iK_1+ \ldots + \lambda_r \iK_r$ \ts $($which does not depend on the choice of \ts $\lambda_1,\ldots, \lambda_r)$.
\end{thm}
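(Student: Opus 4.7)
The plan is to establish \eqref{eq:mixed volume definition} first for convex polytopes in full dimension and then extend by continuity.  As a reduction, one checks that the affine dimension of \. $\lambda_1 \iK_1+\ldots+\lambda_r \iK_r$ \. equals the dimension of the linear span of all differences \ts $\{x-y : x,y \in \iK_1+\ldots+\iK_r\}$, which is independent of the positive weights.  After restricting the ambient space to this affine hull we may assume \ts $d=n$.  Next, every convex body is a Hausdorff limit of convex polytopes, Minkowski addition is continuous in the Hausdorff metric, and \ts $\Vol_n$ \ts is continuous on full-dimensional convex bodies, so it suffices to prove \eqref{eq:mixed volume definition} for polytopes and then pass to the limit --- the coefficients of a convergent sequence of polynomials of bounded degree in finitely many variables themselves converge.

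For the polytope case I would induct on~$n$.  When \ts $n=1$, each \ts $\iK_i$ \ts is an interval of length \ts $\ell_i$, so \ts $\Vol_1\bigl(\sum_i \lambda_i \iK_i\bigr)=\sum_i \lambda_i \ell_i$.  For the inductive step, translate each \ts $\iK_i$ \ts to contain the origin (this leaves the left-hand side of \eqref{eq:mixed volume definition} unchanged) and set \ts $P(\lambda):=\sum_i \lambda_i \iK_i$.  Let \ts $\Nc \subset S^{n-1}$ \ts denote the set of outer unit normals to facets of \ts $P(\lambda)$; a standard observation about Minkowski sums is that each such facet decomposes as \ts $F_\nu(\lambda)=\sum_i \lambda_i F_\nu^{(i)}$, where \ts $F_\nu^{(i)}$ \ts is the support face of \ts $\iK_i$ \ts in direction \ts $\nu$.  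Combining the pyramid decomposition
$$\Vol_n\bigl(P(\lambda)\bigr) \, = \, \frac{1}{n}\sum_{\nu \in \Nc} h_{P(\lambda)}(\nu)\, \Vol_{n-1}\bigl(F_\nu(\lambda)\bigr),$$
the linearity \. $h_{P(\lambda)}(\nu)=\sum_i \lambda_i h_i(\nu)$ \. of support functions, and the inductive hypothesis applied inside the affine hull of each \ts $F_\nu(\lambda)$, one concludes that \ts $\Vol_n\bigl(P(\lambda)\bigr)$ \ts is a homogeneous polynomial of degree~$n$ in \ts $\lambda_1,\ldots,\lambda_r$ \ts with nonnegative coefficients.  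These coefficients are symmetrized in the usual way to produce the required functions \ts $\iV(\iK_{i_1},\ldots,\iK_{i_n})$, and nonnegativity persists in the limit.

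The main technical obstacle is justifying that the set \ts $\Nc$ \ts and the facet decomposition \ts $F_\nu(\lambda)=\sum_i \lambda_i F_\nu^{(i)}$ \ts do not depend on the particular positive weights \ts $\lambda_i$.  This is settled by noting that the support face of \ts $P(\lambda)$ \ts in a given direction \ts $\nu$ \ts equals \ts $\sum_i \lambda_i F_\nu^{(i)}$ \ts by linearity of the functional \ts $x \mapsto \<x,\nu\>$, and this Minkowski sum has dimension \ts $n-1$ \ts exactly when \ts $\sum_i F_\nu^{(i)}$ \ts does --- a condition that does not involve the weights.  Thus the combinatorial type of \ts $P(\lambda)$ \ts is constant on the positive orthant of weights, which is exactly what the inductive calculation needs.
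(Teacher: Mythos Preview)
The paper does not prove Theorem~\ref{thm:Minkowski}: it is stated as a classical background result, with a pointer to \cite[\S19.1]{BZ-book} before the statement and to \cite{HW,Leicht,Schn} immediately after.  So there is no ``paper's own proof'' to compare against; your task here is only to check whether your outline is a correct version of the standard argument.

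Your sketch follows the textbook route (polytopes first via the facet/pyramid decomposition, then approximation), and the key technical point --- that the facet-normal set and the facet decomposition \ts $F_\nu(\lambda)=\sum_i \lambda_i F_\nu^{(i)}$ \ts are constant on the open positive orthant --- is identified and handled correctly.  Two small things to tighten if you were to write this out in full: (i) your nonnegativity argument tacitly uses that after translating each \ts $\iK_i$ \ts to contain the origin, all support values \ts $h_i(\nu)$ \ts are nonnegative, so the inductive product is termwise nonnegative; it is worth saying explicitly that the resulting coefficients are translation invariant because the left-hand side is; (ii) in the limiting step, continuity of \ts $\Vol_n$ \ts on all convex bodies (not just full-dimensional ones) is what you actually use, and you should note that the polynomial identity on a full open cone determines the coefficients uniquely, so the limit of coefficients is well defined.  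With those remarks your outline is sound and matches what one finds in the cited references.
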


\medskip

The coefficients \. $\iV(\iA_{i_1},\ldots, \iA_{i_d})$ \. are called \defnb{mixed volumes}
of \. $\iA_{i_1}, \ldots, \iA_{i_d}$\..  We refer to \cite{HW,Leicht,Schn} for an accessible
introduction to the subject.

\smallskip

\subsection{Posets} \label{ss:def-posets}
For a poset \ts $P=(X,\prec)$ \ts and a subset \ts $Y \ssu X$, denote
by \ts $P_Y=(Y,\prec)$ \ts a \defn{subposet} \ts of~$P$.  We use \ts
$(P-z)$ \ts to denote a subposet \ts $P_{X-z}$\ts, where \ts $z\in X$.
Element \ts $x\in X$ \ts is \defn{minimal} \ts in~$\ts P$, if there exists no element  \ts $y \in X-x$ \ts such that \ts $y \prec x$\ts.  Define \defn{maximal} \ts
elements similarly.
Denote by \ts $\min(P)$ \ts and \ts $\max(P)$ \ts the set
of minimal and maximal elements in~$P$, respectively.

In a poset \ts $P=(X,\prec)$, elements \ts $x,y\in X$ \ts are called
\defn{parallel} or \defn{incomparable} if \ts $x\not\prec y$ \ts
and \ts $y \not \prec x$.  We write \. $x\parallel y$ \. in this case.
\defn{Comparability graph} \ts is a graph on~$X$, with edges
\ts $(x,y)$ \ts where \ts $x\prec y$.
Element \ts $x\in X$ \ts is said to \defn{cover}
\ts $y\in X$, if \ts $y\prec x$ \ts and there are no elements \ts $z\in X$ \ts
such that \. $y\prec z \prec x$.

A \defn{chain} is a subset \ts $C\ssu X$ \ts of pairwise comparable elements.
The \defn{height} of poset \ts $P=(X,\prec)$ \ts is the maximum size of a chain.
An \defn{antichain} is a subset \ts $A\ssu X$ \ts of pairwise incomparable elements.
The \defn{width} of poset  \ts $P=(X,\prec)$ \ts is the size of the maximal antichain.

A \defn{dual poset} \ts is a poset \ts $P^\ast=(X,\prec^\ast)$, where
\ts $x\prec^\ast y$ \ts if and only if \ts $y \prec x$.
%
%
%
A \defn{disjoint sum} \ts $P+Q$ \ts of posets \ts $P=(X,\prec)$ \ts
and \ts $Q=(Y,\prec')$ \. is a poset \ts $(X\cup Y,\prec^\di)$,
where the relation $\prec^\di$ coincides with $\prec$ and $\prec'$ on
$X$~and~$Y$, and \. $x\.\|\. y$ \. for all \ts $x\in X$, $y\in Y$.
A \defn{linear sum} \ts $P\oplus Q$ \ts of posets \ts $P=(X,\prec)$ \ts
and \ts $Q=(Y,\prec')$ \. is a poset \ts $(X\cup Y,\prec^\di)$,
where the relation $\prec^\di$ coincides with $\prec$ and $\prec'$ on
$X$~and~$Y$, and \. $x\prec^\di y$ \. for all \ts $x\in X$, $y\in Y$.

Posets constructed from one-element posets by recursively taking
disjoint and linear sums are called \defn{series-parallel}.
Both \defn{$n$-chain} \ts $C_n$ \ts and \defn{$n$-antichain} \ts $A_n$
\ts are examples of series-parallel posets. \defn{Forest} \ts is
a series-parallel poset formed by recursively taking disjoint sums
(as before), and linear sums with one element: \ts $C_1 \oplus P$.
We refer to \cite[Ch.~3]{Sta-EC} for an accessible introduction,
and to surveys \cite{BrW,Tro} for further definitions and standard
results.
\smallskip

\subsection{Poset polytopes}\label{ss:hist-poset-polytopes}
Let \ts $P=(X,\prec)$ \ts be a poset with \ts $|X|=n$ \ts elements.
The \defn{order polytope} \ts $\cO_P\subset \rr^n$ \ts is defined as
\begin{equation}\label{eq:order-def}
0\le \al_x \le 1 \quad \text{for all}  \quad x\in X\., \qquad
\al_x \le \al_y \quad \text{for all}  \quad  x\prec y, \ \ x,y \in X.
\end{equation}
Similarly, the \defn{chain polytope} (also known as the \defn{stable set polytope})
 \ts $\cS_P\subset  \rr^n$ \ts  is defined as
\begin{equation}\label{eq:chains-def}
\be_x \ge 0 \quad \text{for all}  \quad x\in X\., \qquad
\be_x + \be_y + \ldots \le 1 \quad \text{for all}  \quad  x\prec y \prec \cdots\,, \ x,y,\ldots \in X.
\end{equation}
In \cite{Sta-two}, Stanley computed the volume of both polytopes:
\begin{equation}\label{eq:two-poset}
\Vol \ts (\cO_P) \, = \, \Vol \ts (\cS_P) \, = \, \frac{e(P)}{n!}\..
\end{equation}
This connection is the key to many applications of geometry to
poset theory and vice versa.

\smallskip

\subsection{Terminology}\label{ss:def-term}
For functions \ts $f,g: X\to \rr$, we write \ts
\defn{$f\geqslant g$}, if \ts $f(x) \ge g(x)$ \ts for all \ts $x\in X$.
For an inequality \. $f \geqslant g$, 
the \defn{defect} \ts is a function \ts $h:=f-g$.
The \defn{equality cases} \ts to describe the set of
\ts $x\in X$ \ts such that \ts $f(x)=g(x)$.
Denote by \ts $X_h :=\{x\in X \. : h(x)=0\} \subseteq X$ \ts the subset
of equality cases.

We use \. {\sc E}$_h$ \. to denote the
\defn{equality verification} \ts of \. $f(x)=g(x)$, i.e.\
the decision problem
$$
\text{\sc E}_h \, := \, \big\{ \ts f(x) \. =^? \. g(x) \big\},
$$
where \ts $x\in X$ \ts is an input.  Since \. E$_h=\big\{x\in^? X_h\}$, this is
a special case of the
\defn{inclusion problem}.
We use \. {\sc V}$_h$ \. to denote
the \defn{verification} \ts of \. $h(x)=a$, i.e.\
the decision problem
$$
\text{\sc V}_h \, := \, \big\{ \ts f(x) \ts - \ts g(x) \. =^? \. a \big\},
$$
where \ts $a\in \rr$ \ts and \ts $x\in X$ \ts are the input.  Clearly,
{\sc V}$_h$ \ts is a more general problem than \ts {\sc E}$_h$\ts.

For a subset \ts $Y\subseteq X$, we use \defn{description} \ts for an
equivalent condition for the inclusion problem \ts $\big\{x\in^? Y\big\}$,
where \ts $x\in X$.
We use \defn{equality conditions} \ts for a description of \ts E$_h$.
We say that equality cases of \. $f\geqslant g$ \ts have a \ts \defn{description
in the polynomial hierarchy} \ts
if \. {\sc E}$_h \in \PH$. In other words, there
is a CNF Boolean formula \ts $\Phi(y_1,y_2,y_3,\ldots,x)$,  such that
$$
\forall \ts x\in X \ : \
\text{\sc E}_h \ \Longleftrightarrow \
\exists y_1 \. \forall y_2 \. \exists y_3  \. \ldots \. \Phi(y_1,y_2,y_3,\ldots,x).
$$

\smallskip

\subsection{Complexity} \label{ss:def-CS}
We assume that the reader is familiar with basic notions and results in
computational complexity and only recall a few definitions.  We use standard
complexity classes: \. $\P$, \. $\FP$, \. $\NP$,\. $\coNP$, \. $\SP$, \. $\Sigmap_m$ \. and \. $\PH$.
The notation \. $\{a =^? b\}$ \. is used to denote the
decision problem whether \ts $a=b$.  We use the \emph{oracle notation} \ts
{\sf R}$^{\text{\sf S}}$ \ts for two complexity classes \ts {\sf R}, {\sf S} $\subseteq \PH$,
and the polynomial closure \ts $\<${\sc A}$\>$ for a problem \ts {\sc A} $\in \PSPACE$.
We will also use less common classes \.
$$
\GapP:= \{f-g \mid f,g\in \SP\} \quad \text{and} \quad
\CEP:=\{f(x)=^?g(y) \mid f,g\in \SP\}.
$$
Note that \ts $\coNP \subseteq \CEP$.

We also assume that the reader is familiar with standard decision and
counting problems: \ts {\sc 3SAT}, \ts {\sc \#3SAT} \ts and
\ts {\sc PERMANENT}.  Denote by \ts {\sc \#LE} \ts the problem of
computing the number \ts $e(P)$ \ts of linear extensions.
%
For a counting function \ts $f\in \SP$,
the \defn{coincidence problem} \ts is defined as:
$$\text{\sc C}_f \ := \ \big\{\ts f(x) \. = ^? \ts f(y) \ts \big\}.
$$
Note the difference with the equality verification problem \ts E$_{f-g}$ \ts
defined above.
Clearly, we have both \ts $\text{\sc E}_{f-g}\in \CEP$ \. and \. $\text{\sc C}_f \in \CEP$.
Note also that \.$\text{\sc C}_\text{\#3SAT}$ \ts
is both \ts $\CEP$-complete \ts and \ts $\coNP$-hard.

The distinction between \emph{binary} \ts and \emph{unary} \ts presentation
will also be important.  We refer to \cite{GJ78} and \cite[$\S$4.2]{GJ79}
for the corresponding notions of $\NP$-completeness and \emph{strong} \ts $\NP$-completeness.
Unless stated otherwise, we use the word ``\emph{reduction}'' \ts to mean
``polynomial Turing reduction''.
We refer to \cite{AB,Gold,Pap} for definitions and standard results
in computational complexity.


\medskip

\section{Background and historical overview}\label{s:hist}

\subsection{Geometric inequalities} \label{ss:hist-geom}
The history of equality conditions of geometric inequalities goes back to
antiquity, see e.g.\ \cite{Bla,Porter}, when it was discovered that the
\defn{isoperimetric inequality}
\begin{equation}\label{eq:Isop}\tag{Isop}
\ell(X)^2 \, \ge \, 4 \ts \pi \ts a(X)
\end{equation}
is an equality \. \underline{if and only if} \. $X$ \ts is a circle.
Here \ts $\ell(X)$ \ts is the perimeter and \ts $a(X)$ \ts is the area of a convex \ts
$X\ssu \rr^2$.  This classical result led to numerous extensions and generalizations
leading to the Alexandrov--Fenchel inequality~\eqref{eq:AF}.
We refer to 
\cite{BZ-book,Schn} for a review of the literature.

Below we highlight only the most important developments to emphasize
how the equality conditions become more involved as one moves in the
direction of the AF~inequality
(see also~$\S$\ref{ss:finrem-discrete-isop} and $\S$\ref{ss:finrem-BM}).
The celebrated \defn{Brunn--Minkowski inequality} \ts states that for all
convex \ts $\iK, \iL \subset \rr^d$, we have:
\begin{equation}\label{eq:BM}\tag{BM}
\Vol(\iK+\iL)^{1/d} \, \ge \,     \Vol(\iK)^{1/d} \. + \. \Vol(\iL)^{1/d},
\end{equation}
see e.g.\ \cite{Gar02} for a detailed survey.  This inequality
``plays an important role in almost all branches of mathematics'' \cite{Bar07}.
Notably, both Brunn and Minkowski showed the equality in~\eqref{eq:BM}
holds \. \underline{if and only if} \. $\iK$ \ts is an expansion of~$\ts \iL$.

For the \defn{mean width inequality}
\begin{equation}\label{eq:Mink-mean} \tag{MWI}
s(\iK)^2 \, \ge \, 6\ts \pi \. w(\iK) \. \Vol(\iK)\ts,
\end{equation}
for all convex $\ts \iK \subset  \rr^3$,
Minkowski conjectured (1903) the equality cases are the \emph{cap bodies}
(balls with attached tangent cones). Here \ts $s(\iK)$ \ts
is the surface area and \ts $w(\iK)$ \ts is the
\emph{mean width} \ts of $\ts \iK$.  Minkowski's conjecture
was proved by Bol (1943), see e.g.\ \cite{BF,BZ-book}.

The \defn{Minkowski's quadratic inequality} \ts for three convex
bodies \ts $\iK,\iL,\iM \subset  \rr^3$, states:
\begin{equation}\label{eq:MQI} \tag{MQI}
\iV(\iK,\iL,\iM)^2 \, \ge \, \iV(\iK,\iK,\iM) \. \cdot \. \iV(\iL,\iL,\iM)\ts.
\end{equation}
This is a special case of~\eqref{eq:AF}
for \ts $n=d=3$.  When \ts $\iL=\iB_1$ \ts is a unit
ball and \ts $\iK=\iM$, this gives~\eqref{eq:Mink-mean}.
Favard~\cite[p.~248]{Fav} wrote that the equality
conditions for \eqref{eq:MQI} \emph{``parait difficile \`a \'enonce''}
(``seem difficult to state'').  There are even interesting families of
convex polytopes that give equality cases (see e.g.\ \cite[Fig. 2.1]{SvH-acta}).

Shenfeld and van Handel \cite{SvH-duke} gave a complete characterization
of the equality cases of \eqref{eq:MQI} as triples of convex bodies
that are similarly truncated in a certain formal sense.  Notably, for
the full-dimensional H-polytopes in~$\rr^3$, each with at most
\ts $n$ \ts facets, the equality conditions amount to checking
\ts $O(n)$ \ts linear relations for distances between facet
inequalities.  This can be easily done in polynomial time.

\smallskip

\subsection{Alexandrov--Fenchel inequality} \label{ss:hist-AF}
For the AF inequality~\eqref{eq:AF}, the equality conditions have long
believed to be out of reach, as they would generalize those for
\eqref{eq:Mink-mean} and~\eqref{eq:MQI}.  Alexandrov made a
point of this in his original 1937 paper:

\smallskip

\begin{center}\begin{minipage}{12.8cm}%
{{\em ``Serious difficulties occur in determining the conditions for equality to
hold in the general inequalities just derived''} \cite[$\S$4]{Ale37}.
}
\end{minipage}\end{center}

\smallskip

\nin
Half a century later, Burago and Zalgaller reviewed the literature and summarized:

\smallskip

\begin{center}\begin{minipage}{12.8cm}%
{{\em ``A conclusive study of all these situations when the equality sign holds has not
been carried out, probably because they are too numerous''} \cite[$\S$20.5]{BZ-book}.
}
\end{minipage}\end{center}

\smallskip

\nin
Schneider made a case for perseverance:

\smallskip

\begin{center}\begin{minipage}{12.8cm}%
{{\em ``As \eqref{eq:AF} represents a classical inequality of fundamental importance
and with many applications, the identification of the equality cases is a problem of
intrinsic geometric interest.  Without its solution, the Brunn--Minkowski theory of
mixed volumes remains in an uncompleted state.''} \cite[p.~426]{Schn94}.
}
\end{minipage}\end{center}

\smallskip

The AF inequality has a number of proofs using ideas from convex geometry,
analysis and algebraic geometry, going back to two proofs by Alexandrov
(Fenchel's full proof never appeared).
We refer to \cite{BZ-book,Schn} for an overview of the older literature,
especially \cite[p.~398]{Schn} for historical remarks,  and to
\cite{BL23,CP2,CEKMS,KK,SvH-pams,Wang} for some notable recent proofs.
%
%
All these proofs use a limit argument at the end, which can create new equality
cases that do not hold for generic convex bodies.
This partially explains the difficulty of the problem
(cf.~$\S$\ref{ss:finrem-equality} and \cite[Rem.~3.7]{SvH-duke}).

In~\cite{Ale37}, Alexandrov gave a description of equality cases for
combinatorially isomorphic polytopes.  This is a large family of
full-dimensional polytopes, for which every convex body is a limit.
In particular, he showed that for the full-dimensional axis-parallel boxes
\ts $[\ell_1\times \ldots \times \ell_n]$, the equality in~\eqref{eq:AF} is equivalent
to $\iK$ and $\iL$ being homothetic (cf.~$\S$\ref{ss:finrem-vdW}).

In the pioneering work~\cite{Schn85}, Schneider published a conjectural description
of the equality cases, corrected later by Ewald~\cite{Ewald}, see also~\cite{Schn}.
After many developments, this conjecture was confirmed for all \.
smooth (full-dimensional) convex bodies \ts $\iQ_i$ \ts \cite{Schn90a}, and for all
(not necessarily full-dimensional)  convex bodies \ts $\Q_1=\ldots=\iQ_{n-2}$\ts,
by Shenfeld and van Handel \cite{SvH-acta}.  Closer to the subject
of this paper, in a remarkable development, the authors gave
a geometric description of the equality cases \emph{for all} \ts
convex polytopes.  They explain:

\smallskip

\begin{center}\begin{minipage}{12.8cm}%
{{\em ``Far from being esoteric, it is precisely the case of convex bodies with
empty interior $($which is not covered by previous conjectures$)$ that arises
in combinatorial applications''} \cite[$\S$1.3]{SvH-acta}.
}
\end{minipage}\end{center}

The geometric description of the equality cases in \cite{SvH-acta} is indirect,
technically difficult to prove, and computationally hard in the degenerate
cases.\footnote{It follows from our Theorem~\ref{t:main-AF}
that it has to be, see a discussion in~$\S$\ref{ss:finrem-meaning}.}  While we will
not quote the full statement (Theorem~2.13 in \cite{SvH-acta}), let us mention the
need to find witnesses polytopes \ts $\iM_i \ts, \iN_i \ssu \rr^n$ \ts which must
satisfy certain conditions (Def.~2.10, ibid.)  The second
of these conditions is an equality of certain mixed volumes (Eq.~(2.4), ibid.)

In \cite[$\S$2.2.3]{SvH-acta}, the authors write:  ``Condition (2.4) should be
viewed merely as a normalization''.\footnote{By that the authors of \cite{SvH-acta}
seem to mean that their description captured all the geometry in the problem,
as opposed to the equality of mixed volumes which has no geometric content. }
From the computational complexity point of view, asking for the equality of
mixed volumes (known to be hard to compute, see~$\S$\ref{ss:hist-CA}),
lifts the problem outside of the polynomial hierarchy,
to a hard coincidence problem (see~$\S$\ref{ss:def-CS}).
This coincidence problem eventually percolated into \cite{MS22},
see~\eqref{eq:MS} below, which in turn led directly to this work.

\smallskip

\subsection{Stability} \label{ss:hist-stab}
\defn{Bonnesen's inequality} \ts is an extension of the isoperimetric
inequality~\eqref{eq:Isop}, which states that for every convex \ts
$X\ssu \rr^2$, we have:
\begin{equation}\label{eq:Bon}\tag{Bon}
\ell(X)^2 \. - \. 4\ts \pi \ts a(X) \, \geq \, 4 \ts \pi \ts (R-r)^2,
\end{equation}
where \ts $R$ \ts is the smallest radius of the circumscribed circle,
and \ts $r$ \ts is the maximal radius of the inscribed circle.\footnote{Note
that when \ts $X \ssu \rr^2$ \ts is a nonzero interval, we have \ts $r=0$ \ts and \ts $\ell(X)=4R$,
so the inequality remains strict.}
Moreover, Bonnesen proved~\cite{Bon}, that there is an \defng{annulus}
({thin shell}) \ts $U$ \ts between concentric circles of radii \ts $R\ge r$,
such that \ts $\partial X\subseteq U$ \ts and~\eqref{eq:Bon} holds.
Note that the optimal such annulus can be computed in polynomial
time, see \cite{AAHS}.

Bonnesen's inequality~\eqref{eq:Bon} was an inspiration for many
Bonnesen type inequalities \cite{Oss78,Oss79,Gro90}.
See also discrete versions in~$\S$\ref{ss:finrem-discrete-isop},
and applications in computational geometry in~\cite{KS99}.
There is now an extensive literature on stability inequalities in geometric
and more general context, see e.g.\ \cite{Fig13,Gro93}.


There is an especially large literature on the stability of the
Brunn--Minkowski inequality~\eqref{eq:BM}.  
For major early advances
by Diskant (1973), Groemer (1988) and others, see e.g.\ \cite{Gro93}
and references therein.  We refer to~\cite{Fig14} for
an overview of more recent results, including \cite{FMP09,FMP10}.
See also \cite{EK14} for the thin shell \ts
type bounds, and~\cite{FJ17} for the stability of~\eqref{eq:BM} for
\emph{nonconvex sets}.


For the Alexandrov--Fenchel inequality~\eqref{eq:AF}, the are very few
stability results, all for the full dimensional convex bodies with various
regularity conditions, see e.g.\ \cite{Mar17,Schn90}.

\smallskip

\subsection{Linear extensions} \label{ss:hist-LE}
Linear extensions play a cental role in enumerative combinatorics
and order theory.  They appear in connection with saturated chains
in \emph{distributive lattices}, \emph{standard Young tableaux}
and \emph{$P$-partitions}, see e.g.\ \cite{Sta-EC}.

The world of inequalities for linear extensions has a number of
remarkable results, some with highly nontrivial equality conditions.
Notably, the \defng{Bj\"orner--Wachs inequality} \ts for \ts $e(P)$ \ts
is an equality if and only if \ts $P$ \ts is a forest
\cite[Thm~6.3]{BW89}, see also~\cite{CPP-effective}.
On the other hand, the celebrated \defng{XYZ inequality} \ts
established by Shepp in \cite{She-XYZ} (see also~\cite[$\S$6.4]{ASE}),
has no nontrivial equality cases \cite{Fish}.

An especially interesting example is the \defn{Sidorenko inequality} \ts
\begin{equation}\label{eq:Sid1}
e(P) \cdot e(P^\circ) \. \ge \. n!
\end{equation}
for posets \ts $P, P^\circ$ \ts on the same ground set with \ts $n$ \ts elements,
which have complementary comparability graphs \cite{Sid} \ts (other proofs are given
in~\cite{CPP-effective,GG22}).
Sidorenko also proved that the series-parallel posets are the only
equality cases.
This solves the equality verification problem of \eqref{eq:Sid1}, since the
recognition problem of series-parallel posets is in~$\P$, see~\cite{VTL}.

It was noticed in~\cite{BBS}, that the Sidorenko inequality follows from
\defng{Mahler's conjecture}, which states that for every convex centrally symmetric
body \ts $\iK\subset  \rr^n$, we have:
\begin{equation}\label{eq:Mah}
\Vol(\iK) \cdot \Vol(\iK^{o}) \, \ge \, \frac{4^n}{n!}\,.
\end{equation}
To derive \eqref{eq:Sid1} from \eqref{eq:Mah}, take \ts $\iK$ \ts to be the
union all axis reflections the chain polytope \ts $\cS_P$ \ts defined
in~\eqref{eq:chains-def}.  Mahler's
conjecture \eqref{eq:Mah} is known for all axis symmetric convex bodies
\cite{StR}, but remains open in full generality \cite{AASS},
in part due to the many equality cases \cite[$\S$1.3]{Tao}.

\smallskip

\subsection{Stanley inequality} \label{ss:hist-Stanley}
Stanley's inequality \eqref{eq:Sta} is of independent interest in order
theory, having inspired a large literature especially in the last few years.
The case \ts $k=0$ \ts is especially
interesting.  The unimodality in this case was independently conjectured by
Kislitsyn \cite{Kis68} and Rivest,  while the log-concavity was conjectured
by Chung, Fishburn and Graham \cite{CFG},
who established both conjectures for posets of width two.  Stanley proved them
in \cite{Sta-AF} in full generality.\footnote{For \ts $k \ge 1$, the
inequality~\eqref{eq:Sta} is sometimes called the \defng{generalized Stanley
inequality}, see \cite{CPP-effective}.}  The authors of \cite{CFG}
called Rivest's conjecture ``tantalizing'' and Stanley's proof ``very ingenious''.

The \defng{Kahn--Saks inequality} \ts is a generalization of the \ts $k=0$ \ts
case of \eqref{eq:Sta}, and is also proved from the AF inequality.
This inequality was used to obtain the first positive result in the direction
of the \. \defng{$\frac13-\frac23$ conjecture}~\cite{KS}.  For posets of
width two, both the \ts $k=0$ \ts  case of the Stanley inequality, and the Kahn--Saks
inequality have natural \emph{$q$-analogues} \cite{CPP-KS}.  A generalization
of Stanley's inequality to \emph{marked posets} \ts was given in~\cite{LMS19}.

For all \ts $k\ge  0$, the \defn{vanishing conditions} \ts
$\{\aN_{\bz\bc}(P, x,a)=^?0\}$ \ts are in \ts $\P$.  This was shown by
David and Jacqueline Daykin in \cite[Thm~8.2]{DD}, via explicit necessary
and sufficient conditions.  Recently, this result was rediscovered
in \cite[Thm~1.11]{CPP-effective} and \cite[Thm~5.3]{MS22}.
Similarly, the \defn{uniqueness conditions} \ts $\{\aN_{\bz\bc}(P, x,a)=^?1\}$ \ts
are in~$\poly$ \ts by the result of Panova and the authors \cite[Thm~7.5]{CPP-effective},
where we gave explicit necessary and sufficient conditions.
Both the vanishing and the uniqueness conditions give
examples of equality cases of the Stanley inequality, which remained
a ``major challenge'' in full generality \cite[$\S$9.10]{CPP-effective}.

As we mentioned in the introduction, Shenfeld and van Handel resolved
the \ts $k=0$ \ts case of Stanley equality conditions by giving
explicit necessary and sufficient conditions, 
which can be verified in polynomial time, see \cite{SvH-acta}.
Similar explicit necessary and sufficient conditions for the
Kahn--Saks inequality were conjectured in \cite[Conj.~8.7]{CPP-KS}, and
proved for posets of width two.  Building on the technology in
\cite{SvH-acta}, van Handel, Yan and Zeng gave the proof of this
conjecture in~\cite{vHY}.

In \cite{CP}, we gave a new proof of the \ts $k=0$ \ts
case of \eqref{eq:Sta}, using a \defng{combinatorial atlas} \ts
technology.  This is an inductive self-contained linear algebraic
approach; see \cite{CP2} for the introduction.
We also gave a new proof of the Shenfeld and van Handel equality conditions,
and generalized both results to \defng{weighted linear extensions}
(see $\S\S$1.16-18 in \cite{CP}).

In an important development, Ma and Shenfeld \cite{MS22} advanced the
technology of \cite{SvH-acta}, to give a clean albeit ineffective
combinatorial description of the equality cases in full generality.
In particular, they showed that \eqref{eq:AF} is an equality \ts \underline{if and only if}
\begin{equation}\label{eq:MS}
\aN_{\bz\bc}(P, x,a-1)\, = \, \aN_{\bz\bc}(P, x,a)\, = \,  \aN_{\bz\bc}(P, x,a+1).
\end{equation}
They proceeded to give explicit necessary and sufficient conditions
for these equalities in some cases (see~$\S$\ref{ss:finrem-eq}).
About the remaining cases that they called \defng{critical} (see~$\S$\ref{ss:Sta1-critical}),
they write: ``It is an interesting problem to find
\ts $[$an explicit description$]$ \ts for critical posets'' \cite[Rem.~1.6]{MS22}.
Our Theorem~\ref{t:main-Sta} implies that such a description is unlikely, as it
would imply a disproof of a major conjecture in computational complexity
(see also~$\S$\ref{ss:finrem-meaning}).


\smallskip

\subsection{Complexity aspects} \label{ss:hist-CC}
There are two standard presentations of polytopes: \defng{H-polytopes} \ts described by the
inequalities and \defng{V-polytopes} \ts described by the vertices.  These two presentation
types have very different nature in higher dimensions, see e.g.\ \cite{DGH}.
We refer to \cite{GK94,GK97} for an overview of standard complexity problems in
geometry, and to \cite[$\S$19]{Schr86}, \ts \cite[$\S$5.16]{Schr03},
for the background on totally unimodular matrices and $\TU$-polytopes.
Note also that testing whether matrix \ts $A$ \ts is totally unimodular
can be done in polynomial time, see \cite{Sey}.

When the dimension $n$ is bounded, H-polytopes and V-polytopes have the
same complexity, so the volume and the mixed volumes are in \ts $\FP$.
Thus, the dimension~$n$ is unbounded throughout the paper.
The volume of TU-polytopes is $\SP$-hard via reduction to \ts {\sc KNAPSACK} \cite{DF}.
Note that for \emph{rational} \ts H-polytopes in \ts $\rr^n$, the volume denominators
can be doubly exponential \cite{Law91}, thus not in \ts $\PSPACE$.
This is why we constrain ourselves to TU-polytopes which is a subclass
of H-polytopes that includes all order polytopes (see~$\S$\ref{ss:AF-slices}).

The mixed volume \ts  $\iV(\iQ_1,\ldots,\iQ_n)$ \ts coincides with the permanent when all \ts $\iQ_i$ \ts are
axis parallel boxes, see~\cite{vL} and~$\S$\ref{ss:finrem-vdW}.  Thus, computing
the mixed volume is \ts $\SP$-hard even for the boxes, see \cite{DGH}.
For rational H-polytopes, the vanishing problem \ts $\{\iV(\cdot)=^?0\}$ \ts
can be described combinatorially, and is thus in \ts $\NP$, see \cite{DGH,Est10}.
It is equivalent to computing the rank of intersection of two
geometric matroids (with a given realization), which is in~$\P$,
see \cite[$\S41$]{Schr03}.  For TU-polytopes in~$\rr^n$, the uniqueness problem
\ts $\big\{\iV(\cdot)=^?\frac{1}{n!}\big\}$ \ts is in \ts $\NP$ \ts by a result in~\cite{EG15}.

The problem \ts {\sc \#LE} \ts is proved \ts $\SP$-complete by Brightwell and Winkler
\cite[Thm~1]{BW},
and this holds even for posets of height two~\cite{DP}.
Linial noticed~\cite{Lin}, that this result and~\eqref{eq:two-poset}
together imply that the volume of H-polytopes is \ts $\SP$-hard even
when the input is in unary.  Linial also observed that the number
of vertices of order polytopes is $\SP$-complete (ibid.)

Now, fix \ts $k\ge 0$, \ts $x\in X$ \ts and \ts $\bz\in X^k$.  Clearly, we have:
$$
e(P) \ = \ \sum_{a\in [n]} \, \sum_{\bc \in [n]^k} \, \aN_{\bz\bc}(P, x,a)\ts,
$$
where the summation has size \ts $O(n^{k+1})$.
Thus, computing \. $\aN_{\bz\bc}(P, x,a)$ \. is also \ts $\SP$-complete.

Finally, it was proved in \cite{CP23}, that \ts $\text{\sc C}_\text{\#3SAT}$,
\ts $\text{\sc C}_\text{PERMANENT}$ \ts and \ts
$\text{\sc C}_\text{\#LE}$ \ts are not in \ts $\PH$, unless
\ts $\PH$ \ts collapses to a finite level.  The proof idea of
Theorem~\ref{t:main-Sta} is inspired by these results.


\smallskip

\subsection{Combinatorial interpretations} \label{ss:hist-combin-int}
Finding a combinatorial interpretation is a standard problem throughout
combinatorics, whenever a positivity phenomenon or an inequality
emerges.  Having a combinatorial interpretation allows one to
deeper understand the underlying structures, give asymptotic and
numerical estimates, as well as analyze certain algorithms.
We refer to \cite{Huh,Sta-log-concave,Sta-pos} for an overview of inequalities
in algebraic combinatorics and matroid theory, and to \cite{Pak-OPAC}
for a recent survey from the complexity point of view.

Recall that \ts $\GapP:=\SP-\SP$ \ts is a the class of difference
of two \ts $\SP$ \ts functions, and let \ts $\GapPP$ \ts be a subclass
of \ts $\GapP$ \ts of nonnegative functions.  Thus, for every
inequality \ts $f\geqslant g$ \ts of counting functions \ts $f,g\in \SP$,
we have \ts $(f-g) \in \GapPP$.  It was shown in \cite[Prop.~2.3.1]{IP22},
that \ts $\GapPP\ne \SP$, unless \ts $\PH = \Sigmap_2$.  The key example
is
{\small
$$\big(\text{\#3SAT}(F) \ts - \ts \text{\#3SAT}(F')\big)^2,
$$}
\hskip-0.1cm
see also the first function in \eqref{eq:maltese}.  The other two
functions in \eqref{eq:maltese} were given in~\cite{CP23}.
A \emph{natural} \ts $\GapPP$ \ts problem of computing $S_n$ character squared:
\ts $[\chi^\la(\mu)]^2$, was proved not in~$\ts \SP$ \ts (in unary),
under the same assumptions \cite{IPP22}.

The idea that some natural combinatorial inequalities can have no
combinatorial interpretations appeared in~\cite{Pak}. A number of interesting
examples were given in~\cite[$\S$7]{IP22}, including the
\emph{Cauchy}, \emph{Minkowski}, \emph{Hadamard}, \emph{Karamata}
and \emph{Ahlswede--Daykin inequalities}, all proved not in \ts $\SP$
\ts under varying complexity assumptions.

Closer to the subject of this paper, Ikenmeyer and the second author showed
that the AF defect \ts $\de(\cdot)$ \ts is not in~$\ts\SP$
(unless $\PH=\Sigmap_2$),
even for axis parallel rectangles in $\rr^2$ whose edge length are given
by \ts {\sc \#3SAT} \ts formulas \cite[Thm~7.1.5]{IP22}.
This is a nonstandard model of computation.  One can think of our Main
Theorem~\ref{t:main-AF} as a tradeoff: in exchange for needing a
higher dimension, we now have unary input and the standard model of computation.

\smallskip

\subsection{Complexity assumptions} \label{ss:hist-CA}
The results in the paper use different complexity assumptions,
and navigating between them can be confusing.  Here is short list of standard implications:
$$
\PH \ne \Sigmap_m \ \ \text{for all} \ \ m\ge 2 \quad \Longrightarrow \quad \PH \ne \Sigmap_2 \quad \Longrightarrow \quad \PH \ne \NP
\quad \Longrightarrow \quad
\P \ne \NP\ts.
$$
In other words, the assumption in Theorems~\ref{t:main-AF} and~\ref{t:main-Sta}
is the strongest, while \ts $\P\ne \NP$ \ts is the weakest.  Proving either
of these would be a major breakthrough in theoretical computer science.
Disproving either of these would bring revolutionary changes to the
way the computational complexity understands the nature of computation.
We refer to \cite{Aar16,Wig19} for an extensive discussion, philosophy
and implications in mathematics and beyond.

\medskip



\section{Proof roadmap} \label{s:roadmap}
The results in the paper follow from a series of largely independent
polynomial reductions and several known results.  In this section,
we only state the reductions whose proofs will be given in the
next few sections.  We then deduce both theorems from these reductions.

\subsection{Around Stanley equality} \label{ss:roadmap-lemmas}
First, we show that Theorem~\ref{t:main-AF} follows from Theorem~\ref{t:main-Sta}.
Recall the notation from the introduction.  Let \ts $P=(X,\prec)$ \ts be a poset
on \ts $|X|=n$ \ts elements.  As before, let \ts $x\in X$, \ts $a\in [n]$, \ts $\zb \in X^k$,
and \ts $\bc \in [n]^k$.  Recall also
{\small
\begin{equation*}
\EAF \, := \,
\big\{ 	\iV\big(\iK,\iL,\iQ_1,\ldots,\iQ_{n-2}\big)^2  \, =^? \, \iV\big(\iK,\iK,\iQ_1,\ldots,\iQ_{n-2}\big)
\cdot    \iV\big(\iL,\iL,\iQ_1,\ldots,\iQ_{n-2}\big) \big\},
\end{equation*}
%
%
\begin{equation*}
\ESta_k \, := \,
	\big\{	
\aN_{\bz\bc}(P, x,a)^2 \, =^? \, \aN_{\bz\bc}(P, x,a+1) \cdot   \aN_{\bz\bc}(P, x,a-1)
 \big\},
\end{equation*}
}
where \ts $\aN_{\zb \cb}(P,x,a)$ \ts are defined in~$\S$\ref{ss:intro-Stanley}.

\smallskip

\begin{prop}[{\rm cf.\ \cite[$\S$3]{Sta-AF}}{}]
 \label{p:AF-Sta}
For all \ts $k\ge 0$, \.
{\sc EqualityStanley}$_k$ \. reduces to \. {\sc EqualityAF}.
\end{prop}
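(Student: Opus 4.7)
The plan is to follow Stanley's original derivation of \eqref{eq:Sta} from \eqref{eq:AF} in \cite[\S3]{Sta-AF}: realize $\aN_{\bz\bc}(P,x,a)$ as an explicit mixed volume of $\TU$-polytopes built from the order polytope $\cO_P$, so that the Stanley equality at position $a$ becomes verbatim an AF equality for those polytopes, and no information is lost under the translation.

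For the base case $k=0$, given $P=(X,\prec)$ with $|X|=n$ and a distinguished element $x\in X$, I would consider the slice $\cO_P^{(t)} := \cO_P \cap \{\alpha_x = t\}$ for $t \in [0,1]$. On this slice, coordinates $\alpha_y$ for $y \prec x$ lie in $[0,t]$ (subject to the induced order), those for $y \succ x$ lie in $[t,1]$, and incomparable elements split between the two regimes. After the natural rescaling $\alpha_y \mapsto \alpha_y/t$ below $x$ and $\alpha_y \mapsto (\alpha_y-t)/(1-t)$ above $x$, the slice decomposes as
\begin{equation*}
\cO_P^{(t)} \, = \, t \cdot \iK \. + \. (1-t) \cdot \iL
\end{equation*}
for two explicit polytopes $\iK,\iL \subset \rr^{n-1}$ whose defining inequalities all take the form $\alpha_y \le \alpha_{y'}$, $\alpha_y \ge 0$, or $\alpha_y \le 1$, and are therefore $\TU$. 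Expanding $\Vol_{n-1}\bigl(\cO_P^{(t)}\bigr)$ via Theorem~\ref{thm:Minkowski} and comparing with the classical slice identity
\begin{equation*}
(n-1)! \cdot \Vol_{n-1}\bigl(\cO_P^{(t)}\bigr) \, = \, \sum_{a=1}^n \binom{n-1}{a-1} \aN(P,x,a)\. t^{a-1}(1-t)^{n-a}
\end{equation*}
yields $\aN(P,x,a) = (n-1)! \cdot \iV(\iK,\ldots,\iK,\iL,\ldots,\iL)$ with $a-1$ copies of $\iK$ and $n-a$ copies of $\iL$. Consequently \eqref{eq:Sta} at position $a$ is exactly \eqref{eq:AF} with $\iQ_1 = \cdots = \iQ_{a-2} := \iK$ and $\iQ_{a-1} = \cdots = \iQ_{n-2} := \iL$, and the two equality conditions coincide term-by-term.

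For general $k \ge 1$, the constraints $f(z_i)=c_i$ would be absorbed either by further slicing $\cO_P$ along the hyperplanes $\{\alpha_{z_i} = d_i\}$ for suitably chosen values $d_i$, or by enlarging $P$ with chain-of-ideals gadgets that pin each $z_i$ to rank $c_i$. The $k=0$ decomposition then applies block-by-block and expresses $\aN_{\bz\bc}(P,x,a)$ as a mixed volume of $\TU$-polytopes assembled from sub-order-polytopes of $P$, so that the generalized Stanley inequality at position $a$ is still a single instance of \eqref{eq:AF} whose equality is equivalent to that of \eqref{eq:Sta}.

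The only things that must be checked are that the polytopes produced are genuinely $\TU$ (not merely rational or integral) and that the whole translation runs in polynomial time in the input $(P,x,a,\bz,\bc)$. The $\TU$ property follows from the standard fact that the inequality matrix of an order polytope is the incidence matrix of an acyclic digraph and hence network-$\TU$ (see \cite[\S19]{Schr86}), a property stable under the slicing and coordinate rescalings used above. I expect the main obstacle to be the bookkeeping for $k \ge 1$: one must produce the pinning gadgets (or equivalent slice parameters $d_i$) so that the $\TU$ structure is preserved and the three mixed volumes appearing in \eqref{eq:AF} line up precisely with $\aN_{\bz\bc}(P,x,a-1)$, $\aN_{\bz\bc}(P,x,a)$, and $\aN_{\bz\bc}(P,x,a+1)$. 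No new conceptual input is required beyond Stanley's original geometric observation.
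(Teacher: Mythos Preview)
Your overall strategy matches the paper's: invoke Stanley's mixed-volume identity \cite[Thm~3.2]{Sta-AF} (restated as Lemma~\ref{l:Sta-AF}) to write each $\aN_{\bz\bc}(P,x,a)$ as a mixed volume, observe that shifting $a$ by $\pm 1$ moves one body between the $\iK$-slot and the $\iL$-slot, and verify the bodies are $\TU$. Where your proposal goes wrong is in the actual construction of $\iK,\iL$.

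The rescaling maps $\al_y\mapsto\al_y/t$ and $\al_y\mapsto(\al_y-t)/(1-t)$ are only defined on coordinates $y$ comparable to $x$; for $y\parallel x$ the coordinate $\al_y\in[0,1]$ does not ``split between the two regimes'' in any way that yields $t$-independent polytopes, so your $\iK,\iL$ are not well-defined as stated. The paper sidesteps this entirely by taking \emph{extreme} slices: for the chain $z_1\prec\cdots\prec z_k$ with $x$ inserted at the appropriate spot, set
\[
\iS_i \ := \ \cO_P\,\cap\,\{\al_u=0:u\preccurlyeq z_i\}\,\cap\,\{\al_u=1:u\succcurlyeq z_{i+1}\},
\]
so for $k=0$ these are simply $\cO_P|_{\al_x=1}$ and $\cO_P|_{\al_x=0}$, with no rescaling. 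The Minkowski decomposition of the generic slice is then exactly the nontrivial content of Stanley's lemma rather than a consequence of rescaling. With these $\iS_i$ the $\TU$ check becomes a one-line computation (every constraint row has at most one $+1$ and one $-1$; this is Lemma~\ref{l:slices-TU}), and the general $k\ge 1$ case needs neither pinning gadgets nor free parameters $d_i$---just the $k{+}2$ extreme slices taken with multiplicities $c_1-1,\,c_2-c_1-1,\,\ldots,\,n-c_{k+1}$.
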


\smallskip

The proof of the proposition is given in Section~\ref{s:AF}, is very
close to Stanley's original proof of the inequality~\eqref{eq:Sta}.
The key difference is the observation that slices of order polytopes
are TU-polytopes.  Next, we need a simple technical result.

\smallskip

\begin{lemma} \label{l:Sta-Sta-more}
For all \ts $k> \ell$, \.
{\sc EqualityStanley$_\ell$} \. reduces to \. {\sc EqualityStanley$_k$}\..
\end{lemma}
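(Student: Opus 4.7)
The plan is to add $m := k - \ell$ dummy elements to the poset whose positions in every linear extension are forced, which will turn an $\ESta_\ell$-instance into an $\ESta_k$-instance with the same answer.

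First I would set up the reduction. Given an $\ESta_\ell$-instance $(P,x,a,\bz,\bc)$ with $P=(X,\prec)$, $|X|=n$, $\bz=(z_1,\ldots,z_\ell)\in X^\ell$, $\bc=(c_1,\ldots,c_\ell)\in[n]^\ell$, and $c_1<\cdots<c_\ell$, I construct a new poset $P' = (X',\prec')$ on $n+m$ elements by adjoining fresh elements $w_1,\ldots,w_m$ with the relations $w_1 \prec' w_2 \prec' \cdots \prec' w_m \prec' y$ for every $y \in X$. That is, $P' = C_m \oplus P$, where $C_m$ is the $m$-chain. I then set
\[
\bz' := (w_1,\ldots,w_m,z_1,\ldots,z_\ell) \in (X')^k, \quad
\bc' := (1,2,\ldots,m,\, c_1+m,\ldots,c_\ell+m) \in [n+m]^k,
\]
and send the value $a$ to $a':=a+m$. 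The sequence $\bc'$ is strictly increasing since $c_1+m \geq m+1$, so the ordering convention is satisfied, and $a'\ne c_i+m$ and $a'>m$ whenever $a\ne c_i$, so distinctness is preserved.

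Next I would verify correctness. In every linear extension $f'\in\Ec(P')$, the element $w_j$ is forced to position $j$ because $w_1,\ldots,w_m$ form an initial chain below $X$. Hence restricting $f'$ to $X$ and subtracting $m$ gives a bijection between $\Ec_{\bz'\bc'}(P',x,a'+j)$ and $\Ec_{\bz\bc}(P,x,a+j)$ for each $j\in\{-1,0,+1\}$, so
\[
\aN_{\bz'\bc'}(P',x,a'+j) \;=\; \aN_{\bz\bc}(P,x,a+j) \quad \text{for } j\in\{-1,0,1\}.
\]
Therefore the Stanley-defect $\Phi_{\bz'\bc'}(P',x,a')$ equals $\Phi_{\bz\bc}(P,x,a)$, and the $\ESta_k$-instance is a YES-instance if and only if the original $\ESta_\ell$-instance is.

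Finally I would observe the reduction is polynomial: $|X'|=n+m$, and $m = k-\ell \leq k$, and producing the relations of $P'$ and the sequences $\bz',\bc'$ takes time polynomial in the input size. There is essentially no obstacle here — the only point that requires a moment's attention is to preserve the sortedness and distinctness conventions imposed on $\bc$ and on $a$ versus the $c_i$'s, which is why the $c_i$'s are shifted by exactly $m$ and the new indices $1,\ldots,m$ are placed at the front.
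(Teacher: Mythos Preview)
Your proof is correct and follows essentially the same idea as the paper's: pad with $k-\ell$ dummy elements whose positions are fixed so that the Stanley quantities are preserved. The only cosmetic difference is that the paper adjoins an antichain $A_{k-\ell}$ of incomparable elements and pins them at positions $n+1,\ldots,n+(k-\ell)$ (keeping $a$ unchanged), whereas you adjoin a chain $C_m$ below $P$ at positions $1,\ldots,m$ and shift $a$ and the $c_i$ by~$m$; either choice yields $\aN_{\bz'\bc'}(P',x,a')=\aN_{\bz\bc}(P,x,a)$ for the relevant values of~$a$, and the reduction goes through identically.
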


\begin{proof}
Let \ts $P=(X,\prec)$ \ts be a poset on $n$ elements, and let \ts $\bz\in X^k$,
\ts $\bc\in [n]^k$, \ts $x\in X$, \ts $a\in [n]$ \ts be as in~$\S$\ref{ss:intro-Stanley}.
Denote by \ts $P':=P+A_{k-\ell}$ \ts a poset obtained by adding \ts
$(k-\ell)$ \ts independent elements $z'_1,\ldots,z_{k-\ell}'$\ts.
Let \ts $c_i':=n+i$, for all \ts $1\le i\le k-\ell$.
For \ts $\bz':=\big(z_1,\ldots,z_\ell,z_1',\ldots,z_{k-\ell}'\big)$ \ts and
\ts $\bc':=\big(c_1,\ldots,c_\ell,c_1',\ldots,c_{k-\ell}'\big)$, we have:
$$\aN_{\bz'\bc'}(P', x,a) \, = \, \aN_{\bz\bc}(P, x,a).
$$
Varying $a$, we conclude that \. {\sc EqualityStanley$_k$} \. is equivalent to
\. {\sc EqualityStanley$_\ell$} \. in this special case.  This gives the
desired reduction.
\end{proof}

\smallskip

Next, we simplify the Stanley equality problem to
the following \ts \defn{flatness problem}:
$$
\text{\sc FlatLE}_k \ := \ \big\{\aN_{\zb \cb}(P,x,a) =^? \aN_{\zb \cb}(P,x,a+1)\big\}.
$$
The idea is to ask whether \ts $a$ \ts is in the flat part of the distribution
of \ts $f(x)$ \ts (cf.\ Figure~15.1 in~\cite{SvH-acta}).


\smallskip

\begin{lemma} \label{l:Flat-Sta}
For all \ts $k \geq 0$, \.
{\sc FlatLE$_k$} \. reduces to \. {\sc EqualityStanley$_{k+2}$}\..
\end{lemma}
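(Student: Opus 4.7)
The plan is to reduce $\text{\sc FlatLE}_k$ to $\text{\sc EqualityStanley}_{k+2}$ by constructing, for each input $(P,\zb,\cb,x,a)$ of $\text{\sc FlatLE}_k$, an instance $(P',\zb',\cb',x',a')$ of $\text{\sc EqualityStanley}_{k+2}$ such that the Stanley equality at $a'$ in $P'$ holds if and only if the flatness $\aN_{\zb\cb}(P,x,a) = \aN_{\zb\cb}(P,x,a+1)$ holds. The crucial tool is the Ma--Shenfeld criterion \eqref{eq:MS}, already stated in $\S$\ref{ss:hist-Stanley}, which collapses Stanley equality at $a'$ into the \emph{triple equality}
\[
\aN_{\zb'\cb'}(P',x',a'-1) \, = \, \aN_{\zb'\cb'}(P',x',a') \, = \, \aN_{\zb'\cb'}(P',x',a'+1).
\]
The strategy is to design $P'$ so that one of the two component equalities in the triple is automatic (a formal identity independent of the input), while the other is equivalent, by a bijection of linear extensions, to the required flatness in $P$.

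I would take $P'$ to be $P$ together with two carefully chosen new elements $u,v$, and set $\zb' := \zb \cup (u,v)$ with appropriately prescribed positions $c_u, c_v \in [n+2]$, $x':=x$, and $a'$ a specific shift of $a$ depending on $c_u, c_v$. The natural candidate is to take one of the new elements (say $u$) to be a \emph{clone} of $x$, i.e.\ incomparable to $x$ but with the same up-set and down-set as $x$ in~$P$, which forces $u$ and $x$ to be adjacent in every linear extension of $P'$ and yields a formula of the shape
\[
\aN_{\zb'\cb'}(P',x,b) \ = \ \alpha_b \, \aN_{\zb\cb}(P,x,b-1) \ + \ \beta_b \, \aN_{\zb\cb}(P,x,b)
\]
for suitable combinatorial weights $\alpha_b, \beta_b$ derived from the fixings of $v$ and $u$. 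The second element $v$ (together with the choice of $c_u, c_v$) is used to break the symmetry so that, for $b=a'-1$ and $b=a'$, the two weighted sums coincide automatically (producing one of the Ma--Shenfeld equalities as an identity), while the passage from $b=a'$ to $b=a'+1$ contributes precisely the comparison $\aN_{\zb\cb}(P,x,a)$ versus $\aN_{\zb\cb}(P,x,a+1)$.

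With the construction fixed, the proof reduces to:
\begin{enumerate}
\item a bijective argument computing $\aN_{\zb'\cb'}(P',x,b)$ in terms of $\aN_{\zb\cb}(P,x,\cdot)$ for $b \in \{a'-1,a',a'+1\}$, by splitting linear extensions of $P'$ according to the position of $u$ relative to $x$ (adjacent above vs.\ adjacent below) and the position of $v$;
\item verification that under the chosen relations on $u,v$ and the chosen $c_u, c_v$, the first Ma--Shenfeld equality is a formal identity of these weighted sums;
\item identifying the second equality, after cancellation, with the flatness condition in $P$;
\item invoking \eqref{eq:MS} to translate the resulting triple equality back to Stanley equality.
\end{enumerate}

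The main obstacle is Step (2): finding the right pair $(u,v)$ and fixed positions so that the weighted-sum identity at $b = a'-1$ and $b = a'$ is forced combinatorially, rather than merely coincidentally. This is essentially a design problem about producing a symmetry (or equivalently, an involution on the relevant set of linear extensions) that identifies configurations at $a'-1$ and $a'$; once such an involution is exhibited, the rest of the argument is routine bijective counting combined with Ma--Shenfeld.
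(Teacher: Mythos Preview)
Your proposal has a genuine gap. The claim that a clone $u$ of $x$ (same up- and down-set, $u\parallel x$) ``forces $u$ and $x$ to be adjacent in every linear extension of $P'$'' is false: any element incomparable to~$x$ can sit between $x$ and~$u$. More seriously, you never actually exhibit a construction; you outline a strategy, explicitly flag Step~(2) as ``the main obstacle,'' and leave it unsolved. Invoking the Ma--Shenfeld criterion~\eqref{eq:MS} is legitimate as a black box, but it does not by itself produce the reduction --- you still need a concrete $(P',\zb',\cb',x',a')$ together with the bijective computation in Step~(1), and without that the argument is only a plan.

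The paper's proof is quite different and entirely self-contained (it does not use~\eqref{eq:MS}). One adjoins to $P$ a $3$-chain $u\prec v\prec w$ \emph{incomparable to all of~$X$}, fixes $u$ at position~$a$ and $w$ at position $a+4$ (these are the two new constraints, hence $k+2$), and takes $x'=x$, $a'=b:=a+2$. Since $f(u)=b-2<f(v)<f(w)=b+2$, the floating element~$v$ is always a companion of~$x$. One then splits $\Ec_{\zb\cb}(P,x,a)$ and $\Ec_{\zb\cb}(P,x,a+1)$ according to whether the neighbor of~$x$ at the relevant position is $\succ x$, $\prec x$, or $\parallel x$, obtaining counts $\aM_1,\aM_2,\aM_3$ with $\aN_{\zb\cb}(P,x,a)=\aM_1+\aM_3$ and $\aN_{\zb\cb}(P,x,a+1)=\aM_2+\aM_3$. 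A direct case analysis yields
\[
\aN'(b-1)=2\aM_1+2\aM_3,\quad \aN'(b)=\aM_1+\aM_2+2\aM_3,\quad \aN'(b+1)=2\aM_2+2\aM_3,
\]
so the Stanley defect at~$b$ equals $(\aM_1-\aM_2)^2$, which vanishes iff $\aM_1=\aM_2$ iff flatness holds. Note that neither of the two Ma--Shenfeld equalities is ``automatic'' in this construction --- both are equivalent to $\aM_1=\aM_2$ --- so your design goal of forcing one of them as a formal identity is not the route the paper takes.
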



We prove Lemma~\ref{l:Flat-Sta} in Section~\ref{s:Flat-Sta}.

\smallskip
\subsection{Relative numbers of linear extensions} \label{ss:roadmap-relative}
Let \ts $P=(X,\prec)$ \ts be a poset on \ts $|X|=n$ \ts elements,
and let \ts $\min(P)\subseteq X$ \ts be
the set of minimal elements of~$P$.  For every \ts $x \in \min(P)$, define
the \defn{relative number of linear extensions}:
\begin{equation}\label{eq:rho-def}
\ag(P,x) \, := \, \frac{e(P)}{e(P-x)}\..
\end{equation}
In other words, \. $\ag(P,x) = \pp[f(x)=1]^{-1}$,
where \. $f\in \Ec(P)$ \. is a uniform random linear extension of~$P$.
Denote by \. {\sc \#RLE} \. the problem of computing \ts $\ag(P,x)$.

\smallskip

\begin{lemma} \label{l:RLE}
	$\text{\sc \#RLE}$ \.
	is polynomial time equivalent to \. {\sc \#LE}.
\end{lemma}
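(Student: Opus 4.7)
The plan is to establish both directions of the equivalence via straightforward Turing reductions, with the main content being a telescoping identity.

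\textbf{Easy direction} ($\text{\sc \#RLE}$ reduces to $\text{\sc \#LE}$): Given an input $(P,x)$ with $x \in \min(P)$, I would make two oracle calls to $\text{\sc \#LE}$ to obtain $e(P)$ and $e(P-x)$, and then output $\ag(P,x) = e(P)/e(P-x)$ as a rational. Both integers have $O(n \log n)$ bits, so the output has polynomial bit length and the reduction runs in polynomial time.

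\textbf{Harder direction} ($\text{\sc \#LE}$ reduces to $\text{\sc \#RLE}$): Given an oracle for $\text{\sc \#RLE}$ and a poset $P$ on $n$ elements, the idea is to peel off minimal elements one at a time. Greedily construct a sequence $x_1, x_2, \ldots, x_n$ and posets $P_1 := P$, $P_{i+1} := P_i - x_i$, where at each step $x_i$ is any element of $\min(P_i)$ (such a choice exists since every nonempty finite poset has a minimal element, and can be found in polynomial time). Then by definition of $\ag$,
\begin{equation*}
\prod_{i=1}^{n} \ag(P_i, x_i) \ = \ \prod_{i=1}^{n} \frac{e(P_i)}{e(P_{i+1})} \ = \ \frac{e(P_1)}{e(P_{n+1})} \ = \ \frac{e(P)}{1} \ = \ e(P),
\end{equation*}
where the product telescopes and $P_{n+1} = \nothing$ has a unique (empty) linear extension. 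Each factor is computed by a single oracle call to $\text{\sc \#RLE}$ on a poset of size at most $n$. We then multiply the $n$ rational numbers exactly and output the resulting integer.

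\textbf{Bit-complexity check.} Each $\ag(P_i, x_i)$ is a rational $p_i/q_i$ with $p_i, q_i \leq n!$, hence of polynomial bit length. Multiplying $n$ such rationals and reducing yields an integer of bit length $O(n \log n)$, so the whole reduction is polynomial time. Since this is really the only thing to verify beyond the algebraic identity above, there is no substantial obstacle in the proof; the lemma will follow immediately from the telescoping observation.
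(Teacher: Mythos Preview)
Your proof is correct and follows essentially the same approach as the paper: both directions match, and the telescoping product obtained by successively removing minimal elements is exactly the argument the paper uses (the paper phrases the choice of $x_i$ as coming from a fixed linear extension $g$, which is equivalent to your greedy selection of minimal elements).
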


\begin{proof}
By definition, $\text{\sc \#RLE}$ \ts reduces to \ts $\text{\sc \#LE}$.
In the opposite direction, let \ts $P=(X,\prec)$ \ts be a poset on \ts $|X|=n$ \ts elements.
Fix a linear extension \ts $g\in \Ec(P)$, and let \ts $x_i:=g^{-1}(i)$, \ts
$1\le i \le n$. Denote by \ts $P_i$ \ts a subposet of~$P$ restricted to \ts
$x_i,\ldots,x_n$ \ts and observe that \ts $x_i \in \min(P_i)$.  We have:
$$
e(P) \ = \ \frac{e(P_1)}{e(P_2)} \. \cdot \. \frac{e(P_2)}{e(P_3)} \, \cdots \
=  \ \ag(P_1,x_1) \. \cdot \. \ag(P_2,x_2) \, \cdots \,,
$$
which gives the desired reduction from \ts $\text{\sc \#LE}$ \. to \ts $\text{\sc \#RLE}$.
\end{proof}

\smallskip


We relate {\sc RLE} to flatness equality through the following series of reductions.
Consider the following coincidence problem:
\begin{equation*} 
	\text{\sc CRLE} \ := \ \big\{ \.	
	\rho(P,x) \, =^? \, \rho(Q,y)
	\. \big\},
\end{equation*}
where \ts $P=(X,\prec)$, \ts $Q=(Y,\prec')$ \ts are posets, and \ts $x\in \min(P)$,  \ts $y \in \min(Q)$\ts.

\smallskip

\begin{lemma}[{\rm see Theorem~\ref{thm:CRLE-Flat}}{}]\label{lem:CRLE-Flat}
	$\text{\sc CRLE}$ \.
	reduces to \. $\text{\sc FlatLE}_{0}$\..
\end{lemma}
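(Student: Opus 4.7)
The plan is to construct, from any CRLE instance $(P,x,Q,y)$ with $x \in \min(P)$ and $y \in \min(Q)$, a triple $(R,w,a)$ such that
$$\aN(R,w,a) \, = \, \aN(R,w,a+1) \quad \Longleftrightarrow \quad \rho(P,x) \, = \, \rho(Q,y).$$
Cross-multiplying, the target equivalence reads $e(P)\.e(Q-y) = e(P-x)\.e(Q)$, so the essential task is to encode this product equality as flatness of a linear extension count.

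First I would \emph{normalize} the two inputs. Replacing $P$ by $P \oplus C_k$ or $Q$ by $Q \oplus C_\ell$ for chains of suitable lengths, one can enforce $|P|=|Q|=N$ or whatever size property is convenient, since appending a chain above leaves both $e(P)$ and $e(P-x)$ unchanged and hence preserves $\rho(P,x)$.

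The main step is the \emph{gadget construction}. I would build $R$ from the disjoint union $P \sqcup Q$ by adjoining a new distinguished element $w$ together with a small number of auxiliary elements (buffer antichains and/or chains), and specifying cover relations from $w$ to certain minimal elements of $P$ and $Q$ asymmetrically. The target behavior is dictated by the swap identity
$$\aN(R,w,a) \. - \. \aN(R,w,a+1) \, = \, S_a \. - \. P_{a+1},$$
where $S_a$ counts linear extensions of $R$ placing $w$ at position $a$ with an $R$-successor at $a+1$, and $P_{a+1}$ counts those placing $w$ at $a+1$ with an $R$-predecessor at $a$. The gadget must be designed so that, after factoring a common combinatorial coefficient,
$$S_a \, = \, c \cdot e(P) \cdot e(Q-y) \qquad \text{and} \qquad P_{a+1} \, = \, c \cdot e(P-x) \cdot e(Q).$$
Once achieved, flatness at $(a,a+1)$ becomes equivalent to $e(P)\.e(Q-y)=e(P-x)\.e(Q)$, i.e.\ CRLE.

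The counts $S_a$ and $P_{a+1}$ are evaluated via the standard down-set decomposition
$$\aN(R,w,a) \ = \ \sum_{D} e(R[D]) \cdot e(R-w-D),$$
where the sum runs over down-sets $D \subseteq R \sm (\{w\} \cup U_R(w))$ of size $a-1$, and by applying the identity $\sum_{|D|=j,\, D\text{ down}} e(P[D])\.e(P-D) = e(P)$ on each side. A pair of Vandermonde-type summations then collapses the expression to the advertised product forms.

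The principal obstacle is exactly the design of $w$'s cover relations so that the Vandermonde cancellations isolate $e(P)\.e(Q-y)$ on one side and $e(P-x)\.e(Q)$ on the other, with the \emph{same} multiplicative constant $c$; in particular, no parasitic cross terms of the form $e(P)\.e(Q)$ or $e(P-x)\.e(Q-y)$ may survive. This is where the normalization step is used to line up the binomial coefficients. Once the gadget is in place, polynomial-time constructibility of $(R,w,a)$ from $(P,x,Q,y)$ is immediate, which yields the desired reduction.
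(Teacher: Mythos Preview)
Your sketch correctly identifies the target equivalence \ts $e(P)\.e(Q-y)=e(P-x)\.e(Q)$, but it stops precisely where the proof begins: you never actually build the gadget.  You say you would adjoin \ts $w$ \ts to \ts $P\sqcup Q$ \ts ``together with a small number of auxiliary elements'' and specify covers ``asymmetrically'', and you flag the elimination of parasitic cross terms as ``the principal obstacle'' --- but you do not resolve it.  With the disjoint union \ts $P\sqcup Q$ \ts as the backbone, positions of $P$-elements and $Q$-elements interleave freely, and the down-set sums you write produce binomial convolutions in which terms like \ts $e(P)\.e(Q)$ \ts and \ts $e(P-x)\.e(Q-y)$ \ts are not obviously separable.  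The Vandermonde collapse you invoke is plausible-sounding but unproved here, and the normalization step (padding by chains) does not by itself force the two sides to carry the same constant~$c$.

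The paper's construction sidesteps all of this.  Instead of a parallel sum, it takes the \emph{series} sum \ts $(P^\ast-x)\oplus\{z\}\oplus(Q-y)$ \ts (note the dual of~$P$), and then adds a single floating element \ts $w$ \ts whose relations are: \ts $p\prec w$ \ts iff \ts $x\prec p$ \ts in~$P$, and \ts $w\prec q$ \ts iff \ts $y\prec' q$ \ts in~$Q$.  The separator \ts $z$ \ts forces all of \ts $X-x$ \ts below it and all of \ts $Y-y$ \ts above it, so there is no interleaving and no Vandermonde.  When \ts $f(z)=n+1$ \ts the element \ts $w$ \ts sits among \ts $X-x$ \ts and plays the role of~$x$ in~$P^\ast$, giving \ts $\aN(R,z,n+1)=e(P)\.e(Q-y)$; when \ts $f(z)=n$ \ts it sits among \ts $Y-y$ \ts and plays the role of~$y$, giving \ts $\aN(R,z,n)=e(P-x)\.e(Q)$.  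The ratio is exactly \ts $\rho(P,x)/\rho(Q,y)$, with no auxiliary buffers and constant \ts $c=1$.  The two ideas you are missing are the dualization of~$P$ and the use of a series (rather than parallel) backbone; together they make the construction a few lines long.
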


\smallskip

Next, consider the following decision problem:
\begin{equation*}
	\text{\sc QuadRLE} \ := \ \big\{ \.	
	\ag(P_1,x_1) \cdot \ag(P_2,x_2) \, =^? \,
	\ag(P_3,x_3) \cdot \ag(P_4,x_4)
	\. \big\},
\end{equation*}
where \ts $P_1,P_2,P_3,P_4$ \ts are finite posets and \ts $x_i\in \min(P_i)$, for all \. $1\le i \le 4$.

\smallskip

\begin{lemma}[{\rm see Theorem~\ref{thm:gCF}}{}]\label{l:Quad-CRLE}
	$\text{\sc QuadRLE}$ \.
	reduces to \. $\text{\sc CRLE}$\ts.
\end{lemma}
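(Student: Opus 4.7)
The plan is to reduce $\textsc{QuadRLE}$ to $\textsc{CRLE}$ in two stages: first convert the product identity into a coincidence of two $\#$LE counts, and then wrap that coincidence inside a single CRLE query via a continued-fraction gadget.

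\emph{First stage (product of $\rho$'s to $\#$LE equality).} I would start by expanding $\rho(P_i,x_i)=e(P_i)/e(P_i-x_i)$ and clearing denominators; the QuadRLE identity $\rho(P_1,x_1)\rho(P_2,x_2)=\rho(P_3,x_3)\rho(P_4,x_4)$ then takes the form
\[
e(P_1)\,e(P_2)\,e(P_3-x_3)\,e(P_4-x_4) \,=\, e(P_3)\,e(P_4)\,e(P_1-x_1)\,e(P_2-x_2).
\]
Using multiplicativity of $e(\cdot)$ under ordinal sums, $e(Q_1\oplus\cdots\oplus Q_r)=\prod_j e(Q_j)$, this rewrites as a single coincidence $e(A)=e(B)$, where
\[
A\,:=\,P_1\oplus P_2\oplus(P_3-x_3)\oplus(P_4-x_4),\quad B\,:=\,(P_1-x_1)\oplus(P_2-x_2)\oplus P_3\oplus P_4.
\]
Both $A$ and $B$ have the same cardinality $n_1+n_2+n_3+n_4-2$ and are polynomial-time computable from the input.

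\emph{Second stage ($\#$LE equality to $\rho$-coincidence).} Next, I would convert $e(A)=^? e(B)$ into a single $\rho$-comparison. The key step is to attach to any poset $U$ of cardinality $n$ a polynomial-size gadget, producing a poset $\widehat{U}$ with a distinguished minimum $\hat{u}\in\min(\widehat{U})$ such that $\rho(\widehat{U},\hat{u})$ depends only on $e(U)$ and on $n$---for concreteness, in the form $\rho(\widehat{U},\hat{u})=e(U)/\kappa_n$ with $\kappa_n$ an integer depending only on $n$. Crucially, $U$ enters $\widehat{U}$ only as a subposet, so that $e(U)$ is never computed explicitly. This is the content of Theorem~\ref{thm:gCF}, whose ``continued fraction'' label reflects the recursive nature of the gadget. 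Applying the construction to $A$ and $B$ (which share the same cardinality, and hence the same $\kappa_n$) yields $(\widehat{A},\hat{a})$ and $(\widehat{B},\hat{b})$ with $\rho(\widehat{A},\hat{a})=\rho(\widehat{B},\hat{b})$ iff $e(A)=e(B)$, which by the first stage is equivalent to QuadRLE. Outputting this pair gives the required CRLE-instance.

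\emph{Main obstacle.} The hard part will be the gadget construction of the second stage. Elementary operations such as ordinal or disjoint sums can only alter $\rho$ by size-dependent rational factors (for instance $\rho(P+Q,x)=\tfrac{|P|+|Q|}{|P|}\,\rho(P,x)$ for $x\in\min(P)$) or identify $\rho$ with that of a fixed subposet; none of them realize $\rho(\widehat{U},\hat{u})$ as proportional to the \emph{global} invariant $e(U)$. Overcoming this will require a recursive, continued-fraction-style construction that nests $U$ inside a carefully calibrated frame whose linear extensions split into a contribution from $U$ (providing the factor $e(U)$) and a separate contribution from the frame (providing the denominator $\kappa_n$). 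Verifying that the frame produces exactly $\kappa_n$---and, in particular, that this denominator does not depend on the internal structure of $U$---is where the continued-fraction machinery earns its name.
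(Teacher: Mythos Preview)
Your Stage~2 contains a real gap, and your reading of Theorem~\ref{thm:gCF} is off.

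Theorem~\ref{thm:gCF} \emph{is} the quantitative form of Lemma~\ref{l:Quad-CRLE}; it does not furnish a separate gadget converting $e(U)$ into a $\rho$-value, so invoking it for Stage~2 is circular. Moreover, the concrete gadget you propose --- $\rho(\widehat U,\hat u)=e(U)/\kappa_n$ with $|\widehat U|$ polynomial in $n=|U|$ --- cannot exist. By Proposition~\ref{p:g-bound} one has $1\le \rho(\widehat U,\hat u)\le |\widehat U|$; taking $U=C_n$ gives $e(U)=1$ and forces $\kappa_n\le 1$, while $U=A_n$ gives $e(U)=n!$ and forces $|\widehat U|\ge n!$. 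More broadly, if $U$ sits inside $\widehat U$ as a module (every outside element relates identically to all of~$U$) and $\hat u\notin U$, then both $e(\widehat U)$ and $e(\widehat U-\hat u)$ factor as $e(U)$ times a quantity depending only on $n$ and the frame, so their ratio $\rho$ is independent of $e(U)$ --- exactly the obstruction you flagged. A non-modular embedding no longer depends on $U$ through $e(U)$ alone, so your Stage~2 needs a genuinely new idea that you have not supplied.

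The paper bypasses $e(A)=^?e(B)$ entirely. Lemma~\ref{lem:Sta-ver-pre} builds, from $(P,x)$ and $(Q,y)$, a pair $(R,z)$ with
\[
\rho(R,z)\;=\;|P|\,+\,\Bigl(1+\tfrac{\rho(Q,y)}{\rho(P,x)}\Bigr)^{-1}.
\]
Applying this once with $(P_3,P_1)$ and once with $(P_2,P_4)$, and then using Lemma~\ref{lem:KS-2} to equalize the integer parts, produces two $\rho$-values that agree iff $\rho(P_1,x_1)/\rho(P_3,x_3)=\rho(P_4,x_4)/\rho(P_2,x_2)$, i.e.\ iff \textsc{QuadRLE} holds. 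The continued-fraction flavor lives in this M\"obius-type identity for $\rho$, not in any linearization of~$e(U)$.
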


\smallskip


\subsection{Verification lemma} \label{ss:roadmap-TCS}
Let \ts $P=(X,\prec)$ \ts be a poset on \ts $|X|=n$ \ts elements, and let \ts $x\in \min(P)$.
Consider
\begin{equation}\label{eq:VRLE}  
\text{\sc VerRLE} \ := \	\Big\{ \.	
	\ag(P,x) \.  =^? \,
	\frac{A}{B}
	\. \Big\},
\end{equation}
where $\ts A,B \ts$ are coprime integers with \ts $1\le B \le A\leq n!$ \.
We need the following:

\smallskip

\begin{lemma}[{\rm Verification lemma}{}]
\label{l:Verify-Quart}
\.  $\NP^{\<\text{\sc VerRLE}\>} \. \subseteq \. \NP^{\<\text{\sc QuadRLE}\>}.$
\end{lemma}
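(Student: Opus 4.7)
The plan is to produce a polynomial-time Turing reduction from $\text{\sc VerRLE}$ to $\text{\sc QuadRLE}$, from which the stated oracle inclusion $\NP^{\langle\text{\sc VerRLE}\rangle} \subseteq \NP^{\langle\text{\sc QuadRLE}\rangle}$ follows by routine closure properties of relativized $\NP$. Since $\rho(P, x) \le n = |X|$ always, an instance with $A/B > n$ is trivially rejected; for the remaining instances $A/B \le n$, although the coprime integers $A$ and $B$ themselves may be as large as $n!$.

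The central step is to construct, in polynomial time, a poset $Q$ together with a minimal element $y \in \min(Q)$ such that $\rho(Q, y) = A/B$, with $|Q|$ polynomial in the input bit-length $O(n \log n)$. Granted such $(Q, y)$, the $\text{\sc VerRLE}$ query becomes the coincidence query $\bigl\{\rho(P, x) = \rho(Q, y)\bigr\}$, which is an instance of $\text{\sc CRLE}$; and $\text{\sc CRLE}$ is the trivial special case of $\text{\sc QuadRLE}$ obtained by setting the second pair of factors to a single-element poset, so that the corresponding $\rho$-values both equal $1$.

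To build $(Q, y)$, the plan is to exploit the continued fraction expansion $A/B = [a_0; a_1, \ldots, a_t]$, computed by the Euclidean algorithm in polynomial time. I build posets $(Q_k, y_k)$ inductively on $k$, with $\rho(Q_k, y_k)$ equal to the $k$-th convergent $p_k/q_k$, so that $\rho(Q_t, y_t) = A/B$. Each inductive step adjoins a combinatorial ``CF gadget'' implementing the convergent recurrence
\[ p_k = a_k p_{k-1} + p_{k-2}, \qquad q_k = a_k q_{k-1} + q_{k-2} \]
at the level of linear extension counts at the marked minimal element. A useful building block is the elementary identity $\rho(P^{\sqcup m}, x) = m \cdot \rho(P, x)$, valid when $x$ lies in a designated one of $m$ disjoint copies of $P$, which implements integer multiplication on $\rho$.

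The principal obstacle is the design of the CF gadget: one must realize the linear recurrence above on a poly-size poset while preserving the marked minimum, even when the partial quotients $a_k$ are large. This calls for a compact encoding that avoids a blowup proportional to $a_k$ itself --- for instance, a binary decomposition of $a_k$ combined with the two-term additive structure of the convergent recurrence, so that the gadget's size is polynomial in the input bit-length rather than in $a_k$. Once the gadget is explicitly exhibited and its combinatorial identity is verified by direct counting of linear extensions, the inductive construction of $(Q_t, y_t)$ with $\rho(Q_t, y_t) = A/B$ is complete, and the reduction $\text{\sc VerRLE} \to \text{\sc CRLE} \to \text{\sc QuadRLE}$ follows by composition.
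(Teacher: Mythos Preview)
Your plan has a genuine gap at exactly the point you flag as ``the principal obstacle'': constructing, in polynomial time, a poset $Q$ with $\rho(Q,y)=A/B$ and $|Q|$ polynomial in $n\log n$. The continued fraction $A/B=[a_0;a_1,\dots,a_t]$ can have an individual partial quotient of size $\Theta((n-1)!)$ even when $A/B\le n$; e.g.\ $A=2(n-1)!-1$, $B=(n-1)!$ gives $A/B=[1;1,(n-1)!-1]$. The recursive gadget you have in mind (and the one the paper actually uses in Proposition~7.3) produces a poset whose size is $a_0+\cdots+a_t$, which is then exponential. Your proposed ``binary decomposition of $a_k$'' cannot rescue this within the recursive scheme: at the stage where you realize the tail $[a_k;a_{k+1},\dots]$, that value is $\ge a_k$, and since $\rho(R,z)\le |R|$ for every poset (Proposition~7.10), the poset at that stage must already have $\ge a_k$ elements. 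The disjoint-union identity $\rho(P^{\sqcup m},x)=m\cdot\rho(P,x)$ does not help either, since it costs $m|P|$ elements.

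The paper sidesteps this obstruction not by building a single target poset, but by exploiting the nondeterminism in the statement. After normalizing $A/B=k\cdot A'/B'$ with $B'<A'<2B'$, it \emph{guesses} an integer $m\le B'$ and uses the Yao--Knuth estimate (Proposition~7.8) to guarantee that some $m$ exists with both $S_{A'}(m)$ and $S_{B'}(m)$ at most $2(\log A')^2=O(n^2\log^2 n)$. Then Proposition~7.3 yields polynomial-size posets with $\rho=A'/m$ and $\rho=B'/m$, and a single {\sc QuadRLE} call checks $\rho(P,x)\cdot(B'/m)=k\cdot(A'/m)$. Note that this genuinely uses all four slots of {\sc QuadRLE}, not just the {\sc CRLE} special case, and it is not a deterministic Turing reduction from {\sc VerRLE} to {\sc QuadRLE}: the guess of $m$ is where the $\NP$ on the left-hand side is spent.
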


Note that the opposite direction \. ``$\supseteq$'' \. is also true and easy to prove.
Indeed, suppose you have an oracle \. \text{\sc VerRLE}.
Guess the values  \. $a_i:=\ag(P_i,x_i)\in \qqq$, verify that they are correct,
and check that \. $a_1\cdot a_2 = a_3\cdot a_4\ts.$  This gives \. \text{\sc QuadRLE}.
We will only need the direction in the lemma which is highly nontrivial.

\smallskip

\subsection{Putting everything together} \label{ss:roadmap-proofs}
We can now obtain all the results stated in the introduction, except for
Theorem~\ref{t:ESta-1} which uses different tools and is postponed until
Section~\ref{s:Sta1}.

\smallskip

\begin{proof}[Proof of Theorem~\ref{t:main-Sta}]
Recall that \ts {\sc \#LE} \ts is \ts $\SP$-complete \cite{BW} (see also~$\S$\ref{ss:hist-CC}).
By Lemma~\ref{l:RLE}, we conclude that \ts {\sc \#RLE} \ts is \ts $\SP$-hard.  We then have:
\begin{equation}\label{eq:together-LE}
	\ComCla{PH} \. \subseteq \. \P^{\SP} \. \subseteq \. \P^{\<\text{\sc \#RLE}\>} \. \subseteq \. \NP^{\<\text{\sc VerRLE}\>}
\end{equation}
where the first inclusion is Toda's theorem \cite{Toda}, the second inclusion is because \ts {\sc \#RLE} \ts
is $\SP$-hard, and the third inclusion is because one can simulate \ts {\sc \#RLE} \ts by first
guessing and then verifying the answer.

Fix \ts $k\ge 2$.  Combining Lemmas~\ref{l:Sta-Sta-more}, \ts \ref{l:Flat-Sta}, \ts
\ref{lem:CRLE-Flat} \ts and \ts \ref{l:Quad-CRLE},
we conclude that \. {\sc QuadRLE} \. reduces to \. {\sc EqualityStanley}$_k$\ts.  We have:
\begin{equation}\label{eq:together-verify}
	\NP^{\<\text{\sc VerRLE}\>}
	\. \subseteq \. \NP^{\<\text{\sc QuadRLE} \>} \. \subseteq \. \NP^{\<\text{\sc EqualityStanley}_k\>},
\end{equation}
where the first inclusion is the Verification Lemma~\ref{l:Verify-Quart}.
Now, suppose \ts $\text{\sc EqualityStanley}_k \in \PH$.  Then \ts {\sc EqualityStanley}$_k \in \ts \Sigmap_m$ \ts
for some~$m$.  Combining \eqref{eq:together-LE} and \eqref{eq:together-verify}, this implies:
\begin{equation}\label{eq:together-collapse}
	\PH \. \subseteq \. \NP^{\<\text{\sc EqualityStanley}_k\>} \. \subseteq \.  \NP^{\Sigmap_m} \. \subseteq \. \Sigmap_{m+1}\,,
\end{equation}
as desired.
\end{proof}

As a byproduct of the proof, we get the same conclusion for the
intermediate problems.  This result is
potentially of independent interest (cf.~\cite{CP23}).
\smallskip

\begin{cor}\label{c:summary-notin-PH}
Problems \. {\sc VerRLE},  \. {\sc QuadRLE}\., \. {\sc CRLE} \. and \. {\sc FlatLE$_0$} \.
are not in \ts $\PH$, unless \ts $\PH=\Sigmap_m$ \ts for some~$m$.
\end{cor}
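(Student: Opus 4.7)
The plan is to extract the same collapse argument used in the proof of Theorem~\ref{t:main-Sta}, observing that each of the four intermediate problems already sits on the reduction chain that was built in $\S$\ref{ss:roadmap-proofs}. Concretely, the proof of Theorem~\ref{t:main-Sta} established the sandwich
\[
	\PH \, \subseteq \, \NP^{\<\text{\sc VerRLE}\>} \, \subseteq \, \NP^{\<\text{\sc QuadRLE}\>} \, \subseteq \, \NP^{\<\text{\sc CRLE}\>} \, \subseteq \, \NP^{\<\text{\sc FlatLE}_0\>} \, \subseteq \, \NP^{\<\text{\sc EqualityStanley}_k\>}\ts,
\]
where the leftmost inclusion comes from \eqref{eq:together-LE}, the next step is the Verification Lemma~\ref{l:Verify-Quart}, and the remaining steps are Lemma~\ref{l:Quad-CRLE}, Lemma~\ref{lem:CRLE-Flat}, and Lemma~\ref{l:Flat-Sta} (followed by Lemma~\ref{l:Sta-Sta-more} if $k > 2$). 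So all four problems lie between $\PH$ (on the left) and $\text{\sc EqualityStanley}_k$ (on the right) via relativized $\NP$.

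First, I would just name each problem and invoke the corresponding prefix of the chain above. That is, if $\text{\sc A}$ is one of \text{\sc VerRLE}, \text{\sc QuadRLE}, \text{\sc CRLE}, \text{\sc FlatLE}$_0$, then the chain gives $\PH \subseteq \NP^{\<\text{\sc A}\>}$.

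Second, I would run the standard collapse step: suppose for contradiction that $\text{\sc A} \in \PH$. Then $\text{\sc A} \in \Sigmap_m$ for some $m$, and therefore
\[
	\PH \, \subseteq \, \NP^{\<\text{\sc A}\>} \, \subseteq \, \NP^{\Sigmap_m} \, \subseteq \, \Sigmap_{m+1}\ts,
\]
exactly as in \eqref{eq:together-collapse}. This is the desired collapse and completes the argument for each of the four problems.

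I do not anticipate any real obstacle: the entire content is already packaged in the lemmas of Section~\ref{s:roadmap}, and the corollary is essentially an observation that the same argument used to derive Theorem~\ref{t:main-Sta} only uses $\text{\sc EqualityStanley}_k$ through an $\NP$ oracle, so one may truncate the chain at any intermediate problem. The only care needed is to note that the Verification Lemma is what lets us start the chain from \text{\sc VerRLE} rather than from some counting problem outside $\PH$; without it, the case $\text{\sc A} = \text{\sc VerRLE}$ would not follow from the other reductions alone.
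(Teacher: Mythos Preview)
Your proposal is correct and is exactly what the paper does: the corollary is stated as an immediate byproduct of the proof of Theorem~\ref{t:main-Sta}, with no separate argument given. You have simply made explicit the intermediate inclusions \ts $\NP^{\<\text{\sc QuadRLE}\>} \subseteq \NP^{\<\text{\sc CRLE}\>} \subseteq \NP^{\<\text{\sc FlatLE}_0\>}$ \ts that the paper leaves implicit in the phrase ``Combining Lemmas~\ref{l:Sta-Sta-more}, \ref{l:Flat-Sta}, \ref{lem:CRLE-Flat} and~\ref{l:Quad-CRLE}''.
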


\smallskip

\begin{proof}[Proof of Theorem~\ref{t:main-AF}]
The result follows from Proposition~\ref{p:AF-Sta} and Theorem~\ref{t:main-Sta}.
\end{proof}


\begin{proof}[Proof of Corollary~\ref{c:main-AF-stab}]
By the ``Bonnesen type'' assumption, we have
$$\big\{\xi(\cdot) =^?0\big\} \ \ \Longleftrightarrow  \ \
\big\{\de(\cdot) =^?0\big\} \, = \, \text{\sc EqualityAF}\ts.
$$
Since computing \ts $\xi$ \ts is in~$\FP$, we have \ts {\sc EqualityAF} $\in \P$.
Then \eqref{eq:together-collapse} for $k=2$, and Proposition~\ref{p:AF-Sta} give:
\begin{equation}\label{eq:together-stab}
	\PH \. \subseteq \. \NP^{\<\text{\sc EqualityStanley}_2\>} \. \subseteq \.
\NP^{\<\text{\sc EqualityAF}\>} \. \subseteq \.  \NP^{\P} \. = \. \NP,
\end{equation}
as desired.
\end{proof}


\begin{proof}[Proof of Corollary~\ref{c:main-Stanley-not-SP}]
Suppose \ts $\phi_k \in \SP$.  By definition, we have: \ts
$$
\big\{\. \Phi_{\bz\bc}(P, x,a) \, \ne^? \, 0\.\big\} \. \in \. \NP.
$$
In other words, we have
 \ts {\sc EqualityStanley}$_k \in \coNP$.
%
Then \eqref{eq:together-collapse} gives:
$$
	\PH \. \subseteq \. \NP^{\<\text{\sc EqualityStanley}_k\>} \. \subseteq \.  \NP^{\ts \coNP} \. = \. \Sigmap_{2}\,,
$$
as desired.
\end{proof}

\medskip

\section{AF equality from Stanley equality} \label{s:AF}

\subsection{Slices of order polytopes} \label{ss:AF-slices}
Let \ts $P=(X,\prec)$ \ts be a poset on \ts $|X|=n$ \ts elements.
Recall the construction of order polytopes \ts $\cO_P\subseteq [0,1]^n$ \ts
given in~\eqref{eq:order-def}.  Fix \. $z_1\prec \ldots \prec z_k$
\. and \. $1 \le c_1 < \ldots < c_k\le n$\ts.
Denote \ts $Z:=\{z_1,\ldots,z_k\}$ \ts and let \ts $Y:=X\sm Z$.
For all \. $0\le i \le k$, consider \ts the following \defn{slices} \ts of the order
polytopes:
$$
\iS_i \, := \, \cO_P \. \cap \. \{\al_x=0 \, : \, x \preccurlyeq z_i, \. x \in X \}
\. \cap \. \{\al_x=1 \, : \, x \succcurlyeq z_{i+1}, \. x \in X \}. 
$$
Here the conditions \ts $x \preccurlyeq z_i$ \ts and \ts $x \succcurlyeq z_{i+1}$ \ts are
vacuous when \ts $i=0$ \ts and \ts $i=k$, respectively.  Note that \ts $\dim \iS_i \le n-k$ \ts
for all \ts $0\le i \le k$, since \ts $\al_x$ \ts is a constant on \ts $\iS_i$ \ts
for all \ts $x\in Z$.\footnote{In geometric language, slices \ts $\iS_i$ \ts are sections
of the order polytope \ts $\cO_P$ \ts with a $k$-dimensional affine subspace.}  The same
argument implies that these slices are themselves order polytopes of subposets of~$P$,
a fact we do not need.  Instead, we need the following simple result:

\smallskip

\begin{lemma}\label{l:slices-TU}
Slices \. $\iS_i$ \ts are \ts $\TU$-polytopes.
\end{lemma}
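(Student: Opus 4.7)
The plan is to write down the H-description of $\iS_i$ explicitly and verify that its constraint matrix has the well-known structure of a totally unimodular matrix. Writing $\bm{\alpha} = (\alpha_x)_{x\in X} \in \rr^n$, the polytope $\iS_i$ is cut out by the following families of linear constraints, all with right-hand sides in $\{0,1\}$:
\begin{itemize}
\item $\alpha_x \ge 0$ and $\alpha_x \le 1$ for every $x\in X$ (rows $-e_x,\, e_x$);
\item $\alpha_x - \alpha_y \le 0$ for every covering pair $x\prec y$ in $P$ (row $e_x - e_y$);
\item $\alpha_x \le 0$ and $-\alpha_x \le 0$ for every $x\preccurlyeq z_i$ (rows $e_x,\, -e_x$);
\item $\alpha_x \le 1$ and $-\alpha_x \le -1$ for every $x\succcurlyeq z_{i+1}$ (rows $e_x,\, -e_x$).
\end{itemize}
(It suffices to take only covering pairs in the third item, since the remaining order relations follow by transitivity.) Every row of the resulting matrix $A$ has entries in $\{0,\pm 1\}$ and contains at most one $+1$ and at most one $-1$, while the right-hand side vector $\bbi$ is integral.

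The next step is to invoke the classical total unimodularity theorem of Poincar\'e/Heller--Tompkins (see \cite[Thm~19.3]{Schr86}): any $\{0,\pm 1\}$-matrix having at most one $+1$ and at most one $-1$ in each row is totally unimodular. This is exactly the incidence structure of rows of the identity together with the edge-vertex incidence matrix of a directed graph (the Hasse diagram of $P$, oriented by $\prec$), and such matrices are precisely the canonical examples of TU matrices. Applying this theorem to $A$ yields that $A$ is totally unimodular. Combined with $\bbi\in\zz^{\text{(\# rows)}}$, this gives that $\iS_i$ is a TU-polytope by definition.

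The only point that needs a moment's care is that some of the constraints come in as equalities ($\alpha_x = 0$ for $x\preccurlyeq z_i$ and $\alpha_x = 1$ for $x\succcurlyeq z_{i+1}$); but an equality splits into two inequalities of the form $\pm e_x \cdot \bm{\alpha} \le c$ with $c\in\{0,\pm 1\}$, each contributing a row with a single nonzero $\pm 1$ entry, which is compatible with the row structure above. Thus there is no real obstacle here; the argument is essentially bookkeeping, and the substantive content is entirely the standard TU-matrix theorem applied to the Hasse-diagram incidence structure together with the identity block.
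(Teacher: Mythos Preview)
Your proof is correct and follows the same overall strategy as the paper: write out the H-description of $\iS_i$ and verify that the constraint matrix is totally unimodular with integral right-hand side. The only difference is in how total unimodularity is checked. The paper argues directly that every square submatrix can be brought to upper-triangular form by permuting columns (using that the Hasse diagram of $P$ is acyclic), whereas you observe that every row has at most one $+1$ and at most one $-1$ and invoke the standard Poincar\'e/Heller--Tompkins criterion (\cite[Thm~19.3]{Schr86}). Your route is the cleaner and more robust of the two, since the triangularization claim in the paper requires a bit more care to state precisely, while the structural criterion you use applies immediately. One small slip: your parenthetical remark ``covering pairs in the third item'' should refer to the second bullet (the order inequalities), not the third.
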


\begin{proof}
Write \ts $\iS_i$ \ts in the form \. $A \cdot (\al_y)_{y\in Y} \le \bbi$.
Observe that \ts $A$ \ts has \ts $\{-1,0,1\}$ \ts entries, and so does~$\bbi$.
Every square submatrix $B$ of~$A$ corresponds to taking a subposet with added
rows of $0$'s, or with rows of $0$'s and a single~$\pm 1$.  By definition of \ts $\cO_P$, we can
rearrange columns in~$B$ to make it upper triangular.  Thus, $\det(B) \in \{-1,0,1\}$,
as desired.
\end{proof}

\smallskip

\subsection{Proof of Proposition~\ref{p:AF-Sta}} \label{ss:AF-proofs}
Denote by \ts $\Ec_{\zb \bc}(P)$ \ts the set of all linear extensions \ts $f\in \Ec(P)$,
such that \ts $f(z_i)=c_i$ \ts for all~$i$, and let \ts $\aN_{\bz\bc}(P):=|\Ec_{\zb \bc}(P)|$.

Let \ts $\iS_0,\ldots,\iS_k\ssu \rr^n$ \ts be the slices defined above, and note that the dimension \ts
$\dim\<\iS_0,\ldots,\iS_k\> $ \ts  of the subspace spanned by vectors in \ts $\iS_0,\ldots, \iS_k$ \ts is equal to  $n-k$.
Stanley's original proof of~\eqref{eq:Sta} is based on the following key observation:

\smallskip

\begin{lemma}[{\rm \cite[Thm~3.2]{Sta-AF}}{}]  \label{l:Sta-AF}
Let \. $z_1\prec \ldots \prec z_k$
\. and \. $1 \le c_1 < \ldots < c_k\le n$. We have:
\begin{equation}\label{eq:Sta-pol}
\iV\big(\underbrace{\iS_0 \. ,\. \ldots \. , \. \iS_0}_{\text{$c_1-1$  times}}, \.
\underbrace{\iS_1 \. ,\. \ldots \. , \. \iS_1}_{\text{$c_2-c_1-1$  times}}, \. \ldots \. ,
\underbrace{\iS_k \. ,\. \ldots \. , \. \iS_k}_{\text{$n-c_k$  times}}\big) \  =  \ \frac{1}{(n-k)!} \, \aNr_{\bz\bc}(P).
\end{equation}
\end{lemma}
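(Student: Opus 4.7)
The plan is to view~\eqref{eq:Sta-pol} as the extraction of a single monomial coefficient from the Minkowski expansion of \ts $\Vol_{n-k}\bigl(\lambda_0\iS_0 + \ldots + \lambda_k\iS_k\bigr)$, and to match that expansion against a direct computation using linear extensions. Normalize \ts $\lambda_0,\ldots,\lambda_k > 0$ \ts with \ts $\sum_i \lambda_i = 1$, and set \ts $\mu_j := \lambda_0 + \cdots + \lambda_{j-1}$ \ts so that \ts $0 < \mu_1 < \cdots < \mu_k < 1$. The first task is the geometric identification
\begin{equation*}
T(\lambda) \ := \ \lambda_0 \iS_0 + \ldots + \lambda_k \iS_k \ = \ \cO_P \. \cap \. \bigl\{\alpha_{z_j} = \mu_j \. : \. 1 \le j \le k\bigr\}.
\end{equation*}
The inclusion ``$\subseteq$'' is immediate from \ts $\iS_i \subseteq \cO_P$ \ts and the fact that \ts $\alpha_{z_j}$ \ts equals $0$ on $\iS_i$ for $i \ge j$ and $1$ for $i < j$, so the convex combination with weights $\lambda_i$ yields $\mu_j$\ts. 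For the reverse inclusion, given any \ts $\alpha \in \cO_P$ \ts with \ts $\alpha_{z_j} = \mu_j$, I would construct \ts $\beta_i \in \iS_i$ \ts with \ts $\alpha = \sum_i \lambda_i \beta_i$ \ts by rescaling \ts $\alpha$ \ts within each coordinate ``block'' delimited by consecutive \ts $z_j$'s, using the order-polytope inequalities \ts $\alpha_x \le \alpha_y$ \ts for \ts $x \prec y$ \ts to certify feasibility.

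Given this identification, the second step computes \ts $\Vol_{n-k}(T(\lambda))$ \ts directly via the standard unit-simplex decomposition \ts $\cO_P = \bigsqcup_{f \in \Ec(P)} \Delta_f$\ts, where \ts $\Delta_f := \{0 \le \alpha_{f^{-1}(1)} \le \cdots \le \alpha_{f^{-1}(n)} \le 1\}$. For each \ts $f$, setting \ts $d_j := f(z_j)$ \ts with \ts $d_0 := 0$, $d_{k+1} := n+1$, the intersection \ts $\Delta_f \cap \{\alpha_{z_j}=\mu_j\}$ \ts factors as a product of \ts $(k{+}1)$ \ts standard simplices supported on the intervals \ts $[\mu_j, \mu_{j+1}]$\ts, of total $(n{-}k)$-volume \ts $\prod_{j=0}^{k} \lambda_j^{d_{j+1}-d_j-1} / (d_{j+1}-d_j-1)!$\ts. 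Summing over \ts $f \in \Ec(P)$ \ts and grouping by \ts $\db = (d_1,\ldots,d_k)$ \ts yields a polynomial in \ts $\lambda$ \ts whose \ts $\prod_j \lambda_j^{m_j}$ \ts coefficient equals \ts $\aN_{\zb\.\db}(P) / \prod_j m_j!$\ts, where \ts $m_j = d_{j+1}-d_j-1$. Matching this against the Minkowski expansion
\begin{equation*}
\Vol_{n-k}\bigl(T(\lambda)\bigr) \ = \ \sum_{m_0+\cdots+m_k=n-k} \binom{n-k}{m_0,\ldots,m_k} \, \iV\bigl(\iS_0^{m_0},\ldots,\iS_k^{m_k}\bigr) \, \prod_{j=0}^{k} \lambda_j^{m_j}
\end{equation*}
from Theorem~\ref{thm:Minkowski}, and specializing to \ts $m_j = c_{j+1}-c_j-1$ (i.e., \ts $\db = \cb$), the identity \ts $\binom{n-k}{m_0,\ldots,m_k} \prod_j m_j! = (n-k)!$ \ts reduces the coefficient comparison to precisely~\eqref{eq:Sta-pol}.

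The main obstacle is the reverse inclusion in the geometric identification: producing the explicit decomposition \ts $\alpha = \sum_i \lambda_i \beta_i$ \ts whose slice-contributions must simultaneously satisfy the slice-defining constraints \ts $\beta_i(x) = 0$ for $x \preccurlyeq z_i$, \ts $\beta_i(x) = 1$ for $x \succcurlyeq z_{i+1}$\ts, and the order-polytope inequalities for each \ts $\beta_i$ \ts individually. A pragmatic workaround is to establish the polynomial identity on a dense open set of \ts $\lambda$ \ts where all blocks are nondegenerate, and extend the coefficient comparison by continuity of mixed volumes. The remaining ingredients -- the simplex decomposition of \ts $\cO_P$, the product structure of \ts $\Delta_f \cap \{\alpha_{z_j} = \mu_j\}$, and the bookkeeping of factorials -- are routine.
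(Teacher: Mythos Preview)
The paper does not supply its own proof of this lemma; it is quoted as \cite[Thm~3.2]{Sta-AF} and used as a black box in~\S\ref{ss:AF-proofs}. Your outline is correct and is essentially Stanley's original argument.

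The step you flag as the ``main obstacle'' --- the reverse inclusion in the identification of $\sum_i \lambda_i \iS_i$ with the section $\cO_P \cap \{\alpha_{z_j} = \mu_j\}$ --- admits a direct resolution, so there is no need for a workaround. Given $\alpha \in \cO_P$ with $\alpha_{z_j} = \mu_j$ for all~$j$, define for each $x \in X$
\[
\beta_i(x) \ := \ \min\Bigl\{1,\ \max\bigl\{0,\ (\alpha_x - \mu_i)/\lambda_i\bigr\}\Bigr\},
\]
the piecewise-linear ramp that is $0$ on $[0,\mu_i]$, rises linearly to $1$ on $[\mu_i,\mu_{i+1}]$, and stays at~$1$ thereafter. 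A telescope gives $\sum_i \lambda_i \beta_i(x) = \alpha_x$. Each $\beta_i$ is nondecreasing in $\alpha_x$, so $x \prec y$ implies $\beta_i(x) \le \beta_i(y)$, whence $\beta_i \in \cO_P$. Finally, $x \preccurlyeq z_i$ forces $\alpha_x \le \alpha_{z_i}=\mu_i$ and hence $\beta_i(x)=0$, while $x \succcurlyeq z_{i+1}$ forces $\alpha_x \ge \mu_{i+1}$ and hence $\beta_i(x)=1$; thus $\beta_i \in \iS_i$. Your proposed density/continuity workaround is therefore unnecessary, and as stated it is also unclear: the inclusion is a pointwise geometric fact that does not become easier on a dense subset of~$\lambda$.

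One minor bookkeeping point worth making explicit: your two expressions for $\Vol_{n-k}\bigl(\sum_i \lambda_i \iS_i\bigr)$ are compared only on the simplex $\sum_i \lambda_i = 1$. This suffices because both are homogeneous of degree $n-k$ in $(\lambda_0,\ldots,\lambda_k)$, and a homogeneous polynomial is determined by its restriction to any affine hyperplane not passing through the origin.
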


\smallskip

Now let \ts $z_i \gets x$ \ts and \ts $c_i\gets a$ \ts for some~$i$, such that \.
$1 \le c_1 < \ldots < c_k\le n$.  By Lemma~\ref{l:Sta-AF}, the AF inequality \eqref{eq:AF}
becomes \eqref{eq:Sta}.  By Lemma~\ref{l:slices-TU}, slices \ts $\iS_i\ssu \rr^n$ \ts are
TU-polytopes defined by \ts $O(n^2)$ \ts inequalities.
This gives the desired reduction. \qed

\medskip



\section{Stanley equality from flatness} \label{s:Flat-Sta}

\subsection{Ma--Shenfeld poset notation} \label{ss:main-MS}
Recall the following terminology from \cite{MS22}.
Let  \. $s\in \{-1,0,1\}$\..
For any  \. $f \in \Ec_{\bz\bc}(P,x,a+s)$,
the \defnb{companions} \ts in~$\ts f$ \ts
are the elements in
\[ \Com(f) \, := \,
\big\{f^{-1}(a-1), \. f^{-1}(a), \. f^{-1}(a+1)  \big\} \. - \. x. \]
Note that \. $|\Com(f)| =2$ \. for all $s$ as above.
Let the \defn{lower companion} \. $\lc(f)\in \Com(f)$ \. be
the companion with smaller of the two values in~$f$.  Similarly,
let the \defnb{upper companion} \. \. $\uc(f)\in \Com(f)$ \. be
the companion with larger of the two values in~$f$.   Denote by \ts
$\cC(x)\ssu X$ \ts the set of elements \ts $y\in X$ comparable to~$x$, i.e.\
\.  $\cC(x) := \{y \in X \, : \, x \prec y \ \text{or} \ x \succ y\}$.

\smallskip

\subsection{Proof of Lemma~\ref{l:Flat-Sta}}\label{s:Flat-Sta-proof}
%
%
%
Let \. $P=(X,\prec)$, and let \ts $x$, \ts $a$, \ts $\zb=(z_1,\ldots,z_k)$
\ts and \ts $\cb=(c_1,\ldots,c_k)$ \. be an instance of \ts {\sc FlatLE$_k$}
\ts as in Lemma~\ref{l:Flat-Sta}.  To prove the reduction in the lemma, we construct a poset
\ts $Q=(Y,\prec)$ \ts for which \ts $P$ \ts
is a subposet, and \ts $x$, \ts $b$, \ts $\yb$ \ts and \ts $\bx$,
which give the desired instance \. {\sc EqualityStanley$_{k+2}$}\ts.

Without loss of generality, we can assume that \ts $\min(P)=\{z_0\}$ \ts
and  \ts $\max(P)=\{z_{k+1}\}$. In other words, assume that there are
elements \. $z_0,z_{k+1} \in X$ \. such that \. $z_0 \preccurlyeq y \preccurlyeq z_{k+1}$ \.
for all \ts $y \in X$.

Let  \. $\aM_1,\aM_2, \aM_3$ \. be given by
\begin{align*}
	\aM_1 \ &:= \  \big|\big\{ f \in \Ec_{\zb\bc}(P,x,a) \, : \,  f^{-1}(a+1) \succ x \big\} \big|\.,	\\
	\aM_2 \ &:= \  \big|\big\{ f \in \Ec_{\zb\bc}(P,x,a+1)  \, : \,  f^{-1}(a) \prec x \big\} \big|\.,\\
	\aM_3 \ &:= \  \big|\big\{ f \in  \Ec_{\zb\bc}(P,x,a)  \, : \,  f^{-1}(a+1) \. \| \. x \big\} \big| \\
	& \qquad = \  \big|\big\{ f \in  \Ec_{\zb\bc}(P,x,a+1)  \, : \,  f^{-1}(a) \. \| \. x \big\} \big|\..
\end{align*}
Note that the two sets in the definition of \ts $\aM_3$ \ts are in bijection with  each other via the map that swaps \. $f(a)$ \. with \. $f(a+1)$\..
It then follows from here that
\[
\aN_{\zb \bc}(P,x,a) \, = \,  \aM_1 \. + \. \aM_3 \qquad \text{and} \qquad
\aN_{\zb \bc}(P,x,a+1) \, = \,  \aM_2 \. + \. \aM_3\ts.
\]
This implies that
\begin{equation}\label{eq:cmkt-1}
	\aN_{\zb \bc}(P,x,a) \, = \, \aN_{\zb \bc}(P,x,a+1) \qquad \Longleftrightarrow \qquad \aM_1 \ = \ \aM_2\ts.
\end{equation}

Now, let \. $Q=(Y,\prec)$ \. be the poset \ts $P+C_3$\ts, i.e.
\. $Y:=X \cup \{u,v,w\}$ \. and with the additional relation \. $u \prec v \prec w$ \.
and \. $\{u,v,w\}$ \. is incomparable to all elements in~$X$.
Let \. $\ell :=\max \{i \. : \. c_i< a\}$ \. be the maximal index
such that the corresponding element in $\bz$ is less than~$a$.
Let \. $b := a+2$, and let
\begin{align*}
	&\yb \ := \ (z_1,\ldots, z_{\ell}, u, w, z_{\ell+1}, \ldots, z_{k}) \. \in \. Y^{k+2}, \\
	 & \bbb \ := \ (c_1,\ldots, c_\ell, a, a+4, c_{\ell+1}+3, \ldots, c_{k}+3) \. \in \. \nn^{k+2}.
\end{align*}

In the notation above, for \. $s\in \{-1,0,1\}$ \. and \. $f \in \Ec_{\yb\bbb}(Q,x,b+s)$,
the companions \ts in~$\ts f$ \ts
are the elements in
\[ \Com(f) \, := \,
\big\{f^{-1}(b-1), \. f^{-1}(b), \. f^{-1}(b+1)  \big\} \. - \. x\ts .
\]
Let\footnote{We warn the reader that from this point on our notation is substantially
different from that in~\cite{MS22}.}
\begin{align*}
	\cF(b,\comp,\inc) \ &:= \ \big\{f \in  \Ec_{\yb\bbb}(Q,x,b) \, : \, \lc(f)\in \cC(x), \. \uc(f)\not\in \cC(x) \big\},\\
		\cF(b,\inc,\comp) \ &:= \ \big\{f \in  \Ec_{\yb\bbb}(Q,x,b) \, : \, \lc(f)\notin \cC(x), \. \uc(f)\in \cC(x) \big\},\\
			\cF(b,\comp,\comp) \ &:= \ \big\{f \in  \Ec_{\yb\bbb}(Q,x,b) \, : \, \lc(f)\in \cC(x), \. \uc(f)\in \cC(x) \big\},\\
		\cF(b,\inc,\inc) \ &:= \ \big\{f \in  \Ec_{\yb\bbb}(Q,x,b) \, : \, \lc(f)\not\in \cC(x), \. \uc(f)\not\in \cC(x) \big\},	
\end{align*}
and we write \. $\aF(b,\cdot, \cdot) \. := \. |\cF(b,\cdot,\cdot)|$\ts.
Note that by construction it follows that, for all \ts $f \in \cF(b,\cdot,\cdot)$, we have
\[ b-2 \ = \ f(u)  \ < \  f(v) \ < \ f(w) \ = \ b+2, \]
so \ts $f(v) \in \{b-1,b,b+1\}$\ts, and thus $v$ will always be a companion in~$f$.
Sets \ts $\cF(b+1,\ast,\ast)$ \ts and \ts $\cF(b-1,\ast,\ast)$ \ts are defined
analogously.

\smallskip

\begin{claim}  We have: {\rm
\begin{alignat*}{2}\label{eq:yskn-1}
	& \aF(b,\comp,\inc) \ = \ \aM_2\,, \qquad && \aF(b,\inc,\comp) \ = \ \aM_1\ts,\\
	& \aF(b,\comp,\comp) \ = \ 0\,, \qquad && \aF(b,\inc,\inc) \ = \ 2\ts\aM_3\ts,\\
	& \aF(b+1,\comp,\inc) \ = \ \aM_2\,, \qquad && \aF(b+1,\inc,\comp) \ = \ \aM_2\ts,\\
& \aF(b+1,\comp,\comp) \ = \ 0\,, \qquad && \aF(b+1,\inc,\inc) \ = \ 2\ts\aM_3\ts,\\
	& \aF(b-1,\comp,\inc) \ = \ \aM_1\,, \qquad && \aF(b-1,\inc,\comp) \ = \ \aM_1\ts,\\
& \aF(b-1,\comp,\comp) \ = \ 0\,, \qquad && \aF(b-1,\inc,\inc) \ = \ 2\ts\aM_3\ts.
\end{alignat*}}
\end{claim}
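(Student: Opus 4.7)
The plan is to establish, for each of the twelve entries, a bijection between $\cF(b+s, \cdot, \cdot)$ and one of the subsets of $\Ec_{\zb\bc}(P, x, \cdot)$ used to define $\aM_1$, $\aM_2$, $\aM_3$.  The structural input is that $f(u) = b-2$, $f(w) = b+2$, and $u \prec v \prec w$ force $f(v) \in \{b-1, b, b+1\}$.  Since $f(x) = b+s$ also lies in this window, the three positions $\{b-1, b, b+1\}$ are filled by $v$, $x$, and exactly one further element $y \in X \setminus \{x, z_1, \ldots, z_k\}$; in particular the two companions of $x$ in $f$ are always $v$ and $y$.  Because $v$ is incomparable to every element of $X$ by construction of $Q$, we have $v \notin \cC(x)$, which immediately forces the three vanishing identities $\aF(b+s, \comp, \comp) = 0$.

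For the remaining nine identities I use the restriction map $f \mapsto g$, where $g : X \to [n]$ is the composition of $f|_X$ with the order-preserving bijection from $[n+3] \setminus \{f(u), f(v), f(w)\}$ onto $[n]$.  Since $u, v, w$ are incomparable to every element of $X$, the map $g$ is a linear extension of $P$ satisfying $g(z_i) = c_i$ for every $i$.  A short computation using
\[ g(x) \ = \ f(x) \. - \. \bigl|\{p \in \{f(u), f(v), f(w)\} \, : \, p < f(x)\}\bigr| \]
gives $g(x) = a+1$ whenever $s = +1$, $g(x) = a$ whenever $s = -1$, and for $s = 0$ gives $g(x) = a$ if $f(v) = b-1$ and $g(x) = a+1$ if $f(v) = b+1$.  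Combined with the comparability of $y$ with $x$ and the sign of $f(y) - f(x)$, this reads off exactly which of the sets defining $\aM_1$, $\aM_2$, $\aM_3$ the restriction $g$ belongs to.

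The factor of~$2$ in the $(\inc,\inc)$ identities arises because, when $y \| x$, both positions of $f(v)$ in the window $\{b-1, b, b+1\} \setminus \{f(x)\}$ produce valid linear extensions of $Q$, so each such $g$ has exactly two preimages; by contrast, in the $(\comp, \inc)$ and $(\inc, \comp)$ rows only one of the two positions for $f(v)$ is compatible with the prescribed comparability profile of the companions.  The main obstacle is the organized bookkeeping across the nine non-trivial cases; a useful simplification is that $\aM_1$ (with $g(x) = a$ and $g^{-1}(a+1) \succ x$) matches precisely the $s = -1$ row, while $\aM_2$ (with $g(x) = a+1$ and $g^{-1}(a) \prec x$) matches the $s = +1$ row, leaving only the $s = 0$ row to split between $\aM_1$ and $\aM_2$ according to the position of $f(v)$.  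Surjectivity onto each target set is immediate, since any $g$ in the target lifts uniquely to $f$ once $f(u) = b-2$, $f(w) = b+2$, and the admissible choice of $f(v)$ are prescribed.
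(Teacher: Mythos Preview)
Your proposal is correct and follows essentially the same approach as the paper: both define the restriction map that deletes $u,v,w$ and relabels (the paper calls it $\psi$), observe that $v$ is always one of the two companions and is incomparable to~$x$, and then check case by case which of the $\aM_i$-sets the image $g$ lands in. The paper only writes out the $s=0$ row explicitly and declares the other rows analogous, whereas you give a uniform computation of $g(x)$ across all three values of~$s$ and explain the $2$-to-$1$ versus $1$-to-$1$ behavior in the $(\inc,\inc)$ versus mixed cases; this is a cleaner organization of the same argument, not a different one.
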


\begin{proof}
We  only compute the values \ts $\aF(b,\ast,\ast)$, as proof of the other cases is analogous.
Denote by \ts $\Ec_{\zb \bc}(P)$ \ts the set of all linear extensions \ts $f\in \Ec(P)$,
such that \ts $f(z_i)=c_i$ \ts for all~$i$.

Let \. $\psi :\Ec_{\yb \bbb}(Q) \to \Ec_{\zb \bc}(P)$ \. be the map given by \ts $\psi(f) = g$, where
\begin{equation*}
	g(s) \ := \
\begin{cases}
	f(s) & \text{ if } \ \  f(s)<f(u),\\
	f(s)-1 & \text{ if } \ \ f(u) < f(s) < f(v),\\
	f(s)-2 & \text{ if } \ \ f(v) < f(s) < f(w),\\
	f(s)-3 & \text{ if } \ \ f(s)>f(w),
\end{cases}
\end{equation*}
for all \ts $s\in X$.
It follows from the definition of \ts $\lc(f)$ \ts and \ts $\uc(f)$,  that
\[ 	
\cF(b,\comp,\inc) \ = \ \big\{f \in  \Ec_{\yb\bbb}(Q,x,b) \, : \, f^{-1}(b-1) \prec x, \. f^{-1}(b+1)=v \big\}
\., \]
It then follows that \ts $\vp$ \ts restricted to \ts $\cF(b,\comp,\inc)$ \ts is a bijection onto
\[ \big\{g \in  \Ec_{\bz\bc}(P,x,a+1) \, : \, g^{-1}(a+1) \prec x \big\},
\]
which gives us
$\aF(b,\comp,\inc)=\aM_2$.
Similar arguments gives \. $\aF(b,\inc,\comp)=\aM_1$\ts.
Note that \. $\aF(b,\comp,\comp)=0$, because
$v$ is always a companion in $f$ but
 $v \. \| \. x$ by definition.
Note also that
\begin{equation*}
\aligned
\cF(b,\inc,\inc) \ & =  \ \big\{f \in  \Ec_{\yb\bbb}(Q,x,b) \ : \ f^{-1}(b-1)  \. \|  \. x, \. f^{-1}(b+1)=v  \big\} \\
& \hskip1.cm \cup \
\big\{f \in  \Ec_{\yb\bbb}(Q,x,b) \ : \ f^{-1}(b+1) \. \|  \. x, \. f^{-1}(b-1)=v \big\}.
\endaligned
\end{equation*}
It then follows that \ts $\psi$ \ts restricted to \ts $\cF(b,\comp,\inc)$ \ts is a bijection onto
\[
\big\{g \in  \Ec_{\bz\bc}(P,x,a+1) \, : \, g^{-1}(a) \. \| \. x \big\} \ \cup \
\big\{g \in  \Ec_{\bz\bc}(P,x,a) \, : \, g^{-1}(a+1) \. \| \. x \big\},
\]
which gives \. $\aF(b,\inc,\inc)=2\ts\aM_3$.
This finishes proof of the claim.
\end{proof}

\smallskip

By the claim, we have:
\begin{equation*}
\begin{split}
	\aN_{\yb \bbb}(Q,x,b) \ &= \ \aF(b,\comp,\inc) \ + \ \aF(b,\inc,\comp) \ + \ \aF(b,\comp,\comp) \ + \ \aF(b,\inc,\inc)\\
	\ &= \ \aM_2 \  + \  \aM_1 \ + \  2\aM_3.
\end{split}
\end{equation*}
Similarly, we have:
\begin{equation*}
	\begin{split}
		\aN_{\yb \bbb}(Q,x,b+1) \ &= \  2 \ts\aM_2  \ + \  2\ts\aM_3\ts, \\
				\aN_{\yb\bbb}(Q,x,b-1) \ &= \  2 \ts\aM_1  \ + \  2\ts\aM_3\ts.
	\end{split}
\end{equation*}
We conclude:
\begin{align*}
& \aN_{\yb\bbb}(Q,x,b)^2  \. - \. \aN_{\yb \bbb}(Q,x,b+1) \cdot \aN_{\yb\bbb}(Q,x,b-1)   \\
& \hskip1.cm = \. (\aM_1+\aM_2+2\aM_3)^2 \. - \. 4(\aM_1+\aM_3)(\aM_2+\aM_3) \, = \,  (\aM_1-\aM_2)^2.
\end{align*}
This implies that
\begin{equation}\label{eq:cmkt-2}
\aN_{\yb\bbb}(Q,x,b)^2  \, = \, \aN_{\yb\bbb}(Q,x,b+1) \cdot \aN_{\yb\bbb}(Q,x,b-1)
\quad \Longleftrightarrow \quad \aM_1=\aM_2\ts.
\end{equation}
Lemma~\ref{l:Flat-Sta} now follows by combining \eqref{eq:cmkt-1} and \eqref{eq:cmkt-2}. \qed



\medskip

\section{Flatness from the quadruple relative ratio}\label{s:Quad-Fla}
%
Recall several key definitions from Section~\ref{s:roadmap}.
Let \. $\aN(R,z,c)$ \. be the number of linear extensions \ts
$f\in \Ec(R)$ \ts for which \. $f(z)=c$.  Similarly, let
\begin{equation}\label{eq:Flat-RLE}
	\text{\sc FlatLE}_0 \ := \ \big\{\. \aN(R,z,c) \. =^? \. \aN(R,z,c+1)\.\big\},
\end{equation}

\nin
where \ts $R=(Z,\prec^\circ)$ is a finite poset on \ts $|Z|=\ell$ \ts elements, \ts $z\in Z$ \ts
and \. $1\le c \le \ell$.
Finally, let
 \begin{equation}\label{eq:CRLE} 
	\text{\sc CRLE} \ := \ \big\{ \.	
	\rho(P,x) \, =^? \, \rho(Q,y)
	\. \big\},
\end{equation}
where \ts $P=(X,\prec)$, \ts $Q=(Y,\prec')$ \ts are posets, and \ts $x\in \min(P)$,
\ts $y \in \min(Q)$.

\smallskip

\subsection{One poset from two} \label{ss:Flat-ratio-to-one}
The following result give a quantitative version\footnote{We do not actually
need the precise bounds below, other than the fact that they are at most polynomial.
However, these bounds help	to clarify the construction.} of Lemma~\ref{lem:CRLE-Flat}.

\smallskip

\begin{thm}
	\label{thm:CRLE-Flat} \. {\sc CRLE} \.
	reduces to \. {\sc FlatLE$_0$}\ts. More precisely, suppose we have
	a poset \ts $P=(X,\prec)$ \ts on \ts $n=|X|$ \ts elements,
	a poset \ts $Q=(Y,\prec')$ \ts on \ts $m=|Y|$ \ts elements,
	and \ts $x\in \min(P)$, \ts $y\in \min(Y)$.
	Then there exists a  polynomial time construction of a poset \.
	$R=(Z,\prec^\circ)$ \. on \. $\ell:=|Z| =m+n$ \. elements,  \ts $z\in Z$\ts,
and \ts $c\in[\ell]$\ts, such that \ \eqref{eq:Flat-RLE} \, $\Leftrightarrow$ \, \eqref{eq:CRLE}.
\end{thm}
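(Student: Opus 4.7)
The plan is to reduce $\text{\sc CRLE}$ to $\text{\sc FlatLE}_0$ by giving an explicit polynomial-time construction of $R$, $z$ and $c$.  The starting observation is the algebraic reformulation
\[
\rho(P,x) \,=\, \rho(Q,y) \ \Longleftrightarrow \ e(P)\,e(Q-y) \,=\, e(Q)\,e(P-x),
\]
so the reduction amounts to producing a single poset in which a flatness condition captures exactly this bilinear identity on linear extension counts.

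First, I would construct $R$ on a ground set of size $n+m$ by gluing $P$ and $Q$ along a small ``gate gadget.''  The natural scaffold is the disjoint sum $P \sqcup Q$, which already has the correct cardinality; one then adds a controlled set of cross-relations (involving only $x$, $y$ and a few elements adjacent to them, so that the construction stays local and polynomial-time) and designates a distinguished element $z$ whose position distribution becomes sensitive to the bilinear quantity above.  The index $c$ is chosen to sit at the interface where the two-sided counting separates: linear extensions with $f(z) = c$ should essentially complete the $P$-side first (contributing a term proportional to $e(P-x)\,e(Q)$), while those with $f(z) = c+1$ should complete the $Q$-side first (contributing a term proportional to $e(P)\,e(Q-y)$).

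Second, I would enumerate $\aN(R, z, c)$ and $\aN(R, z, c+1)$ by a decomposition in the spirit of the companion analysis used in the proof of Lemma~\ref{l:Flat-Sta}, classifying each linear extension according to the configuration of the elements occupying positions near~$c$.  Most configurations should come in swap-equivalent pairs that cancel in the difference $\aN(R, z, c) - \aN(R, z, c+1)$; the residual non-swappable configurations should aggregate into
\[
\aN(R, z, c) \, - \, \aN(R, z, c+1) \ = \ K \cdot \bigl[\, e(P-x)\,e(Q) \, - \, e(P)\,e(Q-y) \,\bigr],
\]
with $K > 0$ an explicit integer.  Flatness at $c$ then becomes equivalent to the bilinear identity, hence to $\rho(P,x) = \rho(Q,y)$, which is the desired reduction.

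The hard part will be choosing the cross-relations of the gate gadget (and the index~$c$) so that all extraneous contributions cancel cleanly, isolating exactly the two bilinear terms $e(P-x)\,e(Q)$ and $e(P)\,e(Q-y)$.  This is a delicate combinatorial balancing act, and the bulk of the proof will be the verification that the chosen gadget has the claimed cancellation property.  The tight size bound $|Z| = n+m$ further constrains the gadget to be minimal, ruling out the cleaner but larger constructions (for instance, size-equalizing padding by chains) that one would attempt first.  Polynomial-time computability of the construction itself is automatic once the gadget is described, since it is local and the index $c$ depends only on $n$, $m$ and the local structure near $x$~and~$y$.
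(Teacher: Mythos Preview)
Your proposal is not a proof but an outline of intentions: you explicitly defer the ``hard part'' (the choice of gadget and the cancellation verification) to future work.  What survives is the correct algebraic reformulation \. $\rho(P,x)=\rho(Q,y) \Leftrightarrow e(P)\ts e(Q-y)=e(Q)\ts e(P-x)$ \. and a hope that some disjoint-sum-plus-cross-relations gadget will isolate these two bilinear terms after cancellation.  That hope is unsubstantiated, and the disjoint-sum scaffold is a wrong turn: with $P$ and $Q$ sitting side by side and only local cross-relations near $x,y$, the position distribution of any designated element $z\in X\cup Y$ involves convolutions over all interleavings of the two halves, producing sums of many products rather than the two clean terms you need.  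There is no reason to expect the residual non-swappable configurations to aggregate to a single multiple of \. $e(P-x)\ts e(Q)-e(P)\ts e(Q-y)$.

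The paper's construction avoids cancellation entirely by a different architecture.  It \emph{removes} $x$ and $y$, \emph{dualizes} $P$, and takes the \emph{linear} sum \. $(P^\ast-x)\oplus\{z\}\oplus(Q-y)$; a new floating element $w$ (replacing the two removed elements, so $|Z|=n+m$ is met exactly) is made to emulate $x$ inside $P^\ast$ when $f(w)<f(z)$ and to emulate $y$ inside $Q$ when $f(w)>f(z)$.  With $c=n$ one obtains directly \. $\aN(R,z,n)=e(P-x)\ts e(Q)$ \. and \. $\aN(R,z,n+1)=e(P)\ts e(Q-y)$, so flatness at $c$ is immediately equivalent to the bilinear identity.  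The linear-sum structure (not disjoint sum) is what forces the two sides to be counted separately; the dualization of $P$ is what puts $x$ at the top of its block so that a single element $w$ can play both roles; and no companion-style analysis is needed at all.
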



\begin{proof}
	Let \ts $P^\ast=(X,\prec^\ast)$ \ts be the dual poset of~$P$.
	Define \ts $R=(Z,\prec^\circ)$ \ts to be a poset on
	\[  Z \ := \ (X-x) \cup (Y-y) \cup \{w,z\}\ts,\]
	where \ts $w,z$ \ts are two new elements.
	Let the partial order \. $\prec^\circ$ \. coincide with \ts $\prec^\ast$ \ts on \ts $(X-x)$,
	and with \ts $\prec'$ \ts on \ts $(Y-y)$, with  additional relations
	\begin{align}
		\label{eq:koko-1} & p \. \prec^\circ \.  z \. \prec^\circ \. q\., \quad \text{ for all } \ \ p \in X-x, \ q \in Y-y,\\
		\label{eq:koko-2} & p \. \prec^\circ w \ \ \text{ if and only if } \ \  x \prec p \., \quad \text{ for all } \ \ p \in X-x, \\
		\label{eq:koko-3} & w \. \prec^\circ q \ \  \text{ if and only if } \ \ y \prec' q\., \quad \text{ for all } \ \ q \in Y-y.
	\end{align}
That is, we are taking the series sum   \ts $(P^*-x) \. \oplus \. \{z\} \. \oplus \. (Q-y)$,
then adding an element \ts $w$ \ts to emulate \ts $x$ \ts in \ts $P$ \ts for $f(w) <f(z)$,
where $f \in \Ec(R)$, and emulate $y$ in $Q$ when $f(w) > f(z)$.
	It then follows from a direct calculation that
	\[ \aN(R,z,n+1) \, = \, e(P) \. \cdot \. e(Q-y). \]
	Indeed, by \eqref{eq:koko-1}, for every \. $f \in \Nc(R,z,n+1)$ \. we have:
	\[
	\big\{f^{-1}(1),\ldots, f^{-1}(n) \big\} \, = \, X-x +w, \qquad  \big\{f^{-1}(n+2),\ldots, f^{-1}(m+n) \big\} \, = \, Y-y\ts.
	\]
	These two labelings define a linear extension of \ts $P^*$ \ts after a substitution \. $w\gets x$ \. given by \eqref{eq:koko-2},
	and a linear extension  of \ts $Q-y$, and it is clear that this construction defines a bijection.  By an analogous argument, we have:
	\[ \aN(R,z,n) \, = \, e(P^*-x) \. \cdot \. e(Q). \]
Set \ts $c\gets n$. Combining these two observations, we get
	\[
	\frac{\aN(R,z,c+1)}{\aN(R,z,c)} \ = \ \frac{\aN(R,z,n+1)}{\aN(R,z,n)} \ = \ \frac{\agr(P,x)}{\agr(Q,y)}\,,
	\]
	which gives the desired reduction and proves the result.
\end{proof}

\medskip

\subsection{Two posets from four} \label{ss:Quad-Flat-ratio}
Now recall the decision problem
\begin{equation}\label{eq:Quart-RLE}
\text{\sc QuadRLE} \ := \ \big\{ \.	
	\ag(P_1,x_1) \cdot \ag(P_2,x_2) \, =^? \,
	\ag(P_3,x_3) \cdot \ag(P_4,x_4)
	\. \big\}.
\end{equation}
The following result give a quantitative version of Lemma~\ref{l:Quad-CRLE}.

\smallskip

\begin{thm}\label{thm:gCF}
	$\text{\sc QuadRLE}$ \ts reduces to \ts {\sc CRLE}.  More precisely,
	for every \. $P_i = (X_i,\prec_i)$ \.  posets on \. $n_i= |X_i|$ \. elements,
	and \ts $x_i\in \min(P_i)$, \ts $1\le i \le 4$,
	there exists a  polynomial time construction of a poset \. $P=(X,\prec)$ \.
	on \. $n:=|X| \le n_1+\max\{n_2,n_3\}+1$ \. elements, of a poset \.  $Q=(Y,\prec')$ \. on \.
	$m:=|Y|\le n_4+ \max\{n_2,n_3\}+1$ \. elements,
	such that \ \eqref{eq:CRLE} \, $\Leftrightarrow$ \,  \eqref{eq:Quart-RLE}.
\end{thm}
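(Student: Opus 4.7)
My plan is to construct posets $P$ and $Q$ whose $\rho$-values each depend on the inputs only through the products $\rho_1\rho_2$ and $\rho_3\rho_4$ respectively, via a single rational function; throughout I write $\rho_i := \rho(P_i, x_i)$. Then \textsc{CRLE} on $(P, x, Q, y)$ decides exactly \textsc{QuadRLE} on the $(P_i, x_i)$.

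\emph{Step 1 (size balancing).} I pad $P_2$ and $P_3$ to a common size $n := \max\{n_2,n_3\}$ by linear-summing a chain above: let $\widehat P_i := P_i \oplus C_{n - n_i}$ for $i\in\{2,3\}$, where $C_k$ denotes the $k$-element chain. Since $e(C_k) = 1$, the linear-sum identity yields $e(\widehat P_i) = e(P_i)$ and $e(\widehat P_i - x_i) = e(P_i - x_i)$, hence $\rho(\widehat P_i, x_i) = \rho_i$. Moreover $\min(\widehat P_i) = \min(P_i)$, so $x_i$ remains a minimal element. After padding, the $P_2$-side and $P_3$-side posets have the same size~$n$, so a single combining recipe can be applied symmetrically.

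\emph{Step 2 (the combining construction).} The technical heart is a polynomial-time procedure $\mathrm{Comb}$ that takes a pair $(P_a, x_a)$ of size $n_a$ and a padded pair $(\widehat P_b, x_b)$ of size $n$ and outputs a poset on $n_a + n + 1$ elements with a distinguished minimum $\xi$ satisfying
$$
\rho\bigl(\mathrm{Comb}(P_a,\widehat P_b),\,\xi\bigr) \;=\; F\bigl(\rho(P_a,x_a) \cdot \rho(\widehat P_b, x_b)\bigr),
$$
where $F = F_{n_a,n}$ is an explicit injective rational function depending only on the size parameters. I design $\mathrm{Comb}$ along the lines of the construction in Theorem~\ref{thm:CRLE-Flat}: dualize one of the two inputs and place it below a pivot, place the other above, and introduce a ``swinger'' auxiliary element whose two admissible positions in a linear extension couple the event $\{f(x_a)=1\}$ inside the $P_a$-block with the event $\{f(x_b)=1\}$ inside the $\widehat P_b$-block, so that $\Pr[f(\xi)=1]$ factors through the product of those two probabilities. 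The size-balancing from Step~1 makes the lower-order coefficients symmetric, so that contributions that would separately depend on $\rho(P_a,x_a)$ or $\rho(\widehat P_b,x_b)$ combine into a single function of their product. Setting $P := \mathrm{Comb}(P_1, \widehat P_2)$ and $Q := \mathrm{Comb}(P_4, \widehat P_3)$, injectivity of $F$ yields
$$
\rho(P,x) = \rho(Q,y) \ \Longleftrightarrow\ F(\rho_1\rho_2) = F(\rho_3\rho_4) \ \Longleftrightarrow\ \rho_1\rho_2 = \rho_3\rho_4,
$$
with $|P| = n_1 + n + 1$ and $|Q| = n_4 + n + 1$ matching the claimed size bounds.

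\emph{Main obstacle.} The crux is forcing $\rho\bigl(\mathrm{Comb}(\cdot,\cdot),\xi\bigr)$ to depend on its two inputs \emph{only} through the product. A naive construction placing $\xi$ as a new minimum parallel to both $x_a$ and $x_b$ and below everything else yields, by direct counting, an expression of the form
$$
\rho(\mathrm{naive},\xi) \;=\; 1 \,+\, \frac{c_a}{\rho_a} \,+\, \frac{c_b}{\rho_b} \,+\, \frac{c_{ab}}{\rho_a\rho_b}
$$
with explicit size-dependent constants $c_a,c_b,c_{ab}$; the linear terms in $1/\rho_a$ and $1/\rho_b$ obstruct pure product-dependence. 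Arranging the dualization together with the padding from Step~1 so that these linear terms either cancel or are absorbed into an invertible transformation of the product is the main technical calculation; once that cancellation is set up, the rest of the proof is direct counting in the style of the proof of Theorem~\ref{thm:CRLE-Flat} in~$\S$\ref{ss:Flat-ratio-to-one}.
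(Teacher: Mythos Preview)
Your proposal has a genuine gap at exactly the point you flag as the ``main obstacle.'' You set out to build \(\mathrm{Comb}(P_a,\widehat P_b)\) with \(\rho\) depending only on the \emph{product} \(\rho_a\rho_b\), acknowledge that the naive construction gives \(1 + c_a/\rho_a + c_b/\rho_b + c_{ab}/(\rho_a\rho_b)\), and then assert that ``arranging the dualization together with the padding\ldots so that these linear terms either cancel'' is the main calculation---but you never do that calculation, and it is not clear it can be done. Your Step~1 padding equalizes the sizes \(n_2,n_3\) while leaving \(\rho_2,\rho_3\) unchanged; at best this forces \(c_a=c_b\), giving a symmetric function of \(\rho_a,\rho_b\). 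But symmetry is far weaker than product-only dependence: \(1/\rho_a+1/\rho_b\) is symmetric and is not a function of \(\rho_a\rho_b\). Nothing in your outline explains how size-matching or dualization would kill these cross terms.

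The paper sidesteps this obstacle by a different pairing. Rather than grouping \((P_1,P_2)\) and \((P_3,P_4)\) and aiming for products, it groups \((P_1,P_3)\) and \((P_4,P_2)\) and aims for \emph{ratios}, using the trivial reformulation \(\rho_1\rho_2=\rho_3\rho_4 \Leftrightarrow \rho_1/\rho_3=\rho_4/\rho_2\). The key building block (Lemma~\ref{lem:Sta-ver-pre}) is a single construction yielding \(\rho(R,z)=m+\bigl(1+\rho(Q,y)/\rho(P,x)\bigr)^{-1}\), which depends on its two inputs only through their ratio---no cancellation needed, this falls straight out of the counting. One applies it once with \((P_3,P_1)\) and once with \((P_2,P_4)\); the additive constants \(m\) come out as \(n_3\) and \(n_2\), and the size-padding (iterating Lemma~\ref{lem:KS-2}, which shifts \(\rho\) by~\(1\)) is used only to align those constants, not to force any cancellation inside the combining step. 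So your instinct to balance sizes is right, but it enters at a different point and for a simpler reason.
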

\smallskip

We now build toward the proof of this theorem. 

\smallskip

\begin{lemma}\label{lem:Sta-ver-pre}
	Let \ts $P=(X,\prec)$ \ts and \ts $Q=(Y,\prec')$ \ts be posets with \ts $m=|X|$ \ts
	and \ts $n=|Y|$ \ts elements, respectively.  Let \ts $x \in \min(P)$ \ts and \ts $y\in \min(Q)$.
	Then there exists a poset \ts $R=(Z,\prec^\circ)$ \ts and \ts $z \in \min(P)$, such that \. $|Z| = m+n+1$ \. and
	\[ \rho(R,z) \ = \  m \ + \  \left(1  + \tfrac{\rho(Q,y)}{\rho(P,x)}\right)^{-1}\..  \]
\end{lemma}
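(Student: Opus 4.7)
The plan is to leverage the poset produced by Theorem~\ref{thm:CRLE-Flat} (applied to the input $P,Q,x,y$, with the roles of $m$ and $n$ matched to that theorem's convention). Call the resulting poset $R_0$ and its distinguished element $z_0$. The essential feature of $R_0$ is the sandwich $(X-x)\prec^\circ z_0\prec^\circ (Y-y)$ built into its definition, which forces $z_0$ to occupy exactly one of two positions in any linear extension of $R_0$, namely $m$ or $m+1$; the remaining freedom is whether the ``emulator'' element $w$ falls just below or just above $z_0$. The direct count already carried out in the proof of Theorem~\ref{thm:CRLE-Flat} gives
\begin{equation*}
\aN(R_0,z_0,m)\,=\,e(P-x)\cdot e(Q), \qquad \aN(R_0,z_0,m+1)\,=\,e(P)\cdot e(Q-y),
\end{equation*}
and $\aN(R_0,z_0,k)=0$ for every other $k$.

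Next I would define $R$ on the ground set $Z := R_0\cup\{z\}$ for a fresh element $z$, keeping all relations of $R_0$ and adding the single new relation $z\prec^\circ z_0$. Then $|Z|=m+n+1$, and $z$ is minimal in $R$ because its only outgoing relation is upward to $z_0$. Because $z$ is parallel to every element of $R_0$ that is not above $z_0$, each linear extension $\sigma\in\Ec(R_0)$ lifts to exactly $\mathrm{pos}(z_0,\sigma)$ linear extensions of $R$: one for each slot strictly before $z_0$ into which $z$ may be inserted.

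The final step is a short ratio computation. By the lifting observation,
\begin{equation*}
e(R) \,=\, \sum_{\sigma\in\Ec(R_0)} \mathrm{pos}(z_0,\sigma) \,=\, m\cdot \aN(R_0,z_0,m)\,+\,(m+1)\cdot \aN(R_0,z_0,m+1) \,=\, m\cdot e(P-x)e(Q)\,+\,(m+1)\cdot e(P)e(Q-y),
\end{equation*}
while $e(R-z)=e(R_0)=e(P-x)e(Q)+e(P)e(Q-y)$. Dividing the two expressions and substituting the definitions of $\rho(P,x)$ and $\rho(Q,y)$ (after factoring $e(P-x)e(Q-y)$ from both numerator and denominator) yields
\begin{equation*}
\rho(R,z)\,=\,\frac{m\,\rho(Q,y)+(m+1)\,\rho(P,x)}{\rho(P,x)+\rho(Q,y)}\,=\,m\,+\,\Bigl(1+\tfrac{\rho(Q,y)}{\rho(P,x)}\Bigr)^{-1},
\end{equation*}
which is exactly the stated target.

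The main obstacle I anticipate is purely notational: Theorem~\ref{thm:CRLE-Flat} is stated with $n=|P|$ and $m=|Q|$, whereas Lemma~\ref{lem:Sta-ver-pre} uses the opposite convention, so one must translate indices carefully to confirm that the flat support of $\aN(R_0,z_0,\cdot)$ really lands on $\{m,m+1\}$ in the convention of Lemma~\ref{lem:Sta-ver-pre}. One also needs to observe that adding $z$ below $z_0$ is automatically polynomial-time. Beyond this bookkeeping, the argument is entirely mechanical, requiring no new combinatorial input beyond the sandwich structure already exploited in Theorem~\ref{thm:CRLE-Flat}.
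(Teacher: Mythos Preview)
Your proposal is correct and essentially the same as the paper's proof: the poset $R$ you build (namely $R_0$ from Theorem~\ref{thm:CRLE-Flat} with one extra element $z$ placed below $z_0$) is literally the poset the paper constructs in one step, with your $z_0$ playing the role of the paper's~$v$. The only difference is cosmetic---you modularize by reusing the counts $\aN(R_0,z_0,m)$ and $\aN(R_0,z_0,m{+}1)$ already established in Theorem~\ref{thm:CRLE-Flat}, whereas the paper recomputes $e(R)$ and $e(R-z)$ directly; either way the same formulas \eqref{eq:Iching-1} and \eqref{eq:Iching-2} drop out and the ratio simplification is identical.
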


\smallskip

\begin{proof}
	Let \ts $P^\ast= (X,\prec^\ast)$ \ts denotes the dual poset to~$P$.
	Let  \. $R:=(Z,\prec^\circ)$ \. be given by
	\[ Z \ := \  (X-x) \. \cup  \. (Y-y)  \. \cup \. \{v,w,z\}.  \]
	where \ts $\prec^\circ$ \ts inherits the partial order \ts $\prec^*$ \ts on \ts $X-x$\ts,
	the partial order \ts $\prec'$ \ts on \ts $Y-y$\ts,
	and with additional relations:
	\begin{align*}
		& p \, \prec^\circ \, v \. \prec^\circ \. q \qquad \forall \, p \in X-x, \, \ q \in Y-y, \\
		& p \, \prec^\circ \, w   \ \ \, \Leftrightarrow  \ \ \,   p \prec^* \. x \quad \forall \, p \in X-x,\\
		& q \, \succ^\circ \, w    \ \ \,  \Leftrightarrow   \ \ \,   q \succ' \. y \quad \forall \, y \in Y-y,\\
		& z \,  \|_{\prec^\circ} \ p  \ \ \,  \forall \ p \in X-x,  \quad   z \,  \prec^\circ \ q \quad \forall \, q \in Y-y, \\
		& z \, \prec^\circ \, v,  \quad  z \, \|_{\prec^\circ} \ w.
	\end{align*}
That is, we are taking the series sum  \ts $(P^*-x) \. \oplus \. \{v\} \. \oplus \. (Q-y)$\ts,
then adding an element \ts $w$ \ts to emulate \ts $x$ \ts in $P$ \ts for all \ts $f(w) <f(v)$,
emulate \ts $y$ \ts in \ts $Q$ \ts for all \ts $f(w) > f(v)$,
and finally adding \ts $z$ \ts to track the value of \ts $f(v)$.
Here the linear extension \ts $f \in \Ec(R)$ \ts in each case.
By construction, we have either \. $f(v)=m+1$ \. or \. $f(v)=m+2$.
	
	\smallskip
	
	\nin
	{\bf Claim.} \. We have:	
	\begin{align}\label{eq:Iching-1}
		e(R) \ = \   m \. e(P-x)  \. e(Q) \. + \.   (m+1) \. e(P)  \. e(Q-y).
	\end{align}
	
	\begin{proof}[Proof of claim]
		Let us show that the first term \. $m \. e(P-x)  \. e(Q) $ \.
		is the number of  linear extensions \ts $f \in \Ec(R)$ \ts s.t.\ \ts $f(v)=m+1$.
		For such $f$ we have:
		\[ \aligned
		\big\{f^{-1}(1),\ldots, f^{-1}(m) \big\} \ & = \ X- x + z, \\
		\big\{f^{-1}(m+2),\ldots, f^{-1}(m+n+1) \big\} \ & = \ Y  -y + w.
		\endaligned
		\]
		Note that the restriction of  \ts $f$ \ts to \. $\big\{f^{-1}(1),\ldots, f^{-1}(m) \big\}$ \.
		defines a  linear extension of \ts $(P^*-x +z)$.
		Additionally, note that the restriction of  \ts $f$ \ts to \. $\big\{f^{-1}(m+2),\ldots, f^{-1}(m+n+1) \big\}$ \. defines
		a  linear extension of \ts $Q$.
		It is also clear that  this construction defines a bijection.
		 In total, we have \. $e(P^*-x +z)\. e(Q) = m \. e(P-x)\. e(Q)$ \. linear extensions~$\ts f$ \ts
		as above.
		
		Similarly, let us show that the second term \. $(m+1) \. e(P)  \. e(Q-y)$ \.
		is the number of linear extensions \ts $f \in \Ec(R)$ \ts s.t.\ \ts $f(v)=m+2$.
		For such $f$ we have:
		\[ \aligned
		\big\{f^{-1}(1),\ldots, f^{-1}(m+1) \} \ & = \ X-x + w +z, \\
		\big\{f^{-1}(m+3),\ldots, f^{-1}(m+n+1) \big\} \ & = \ Y  -y.
		\endaligned\]
		Note that the restriction of \ts $f$ \ts to \. $\big\{f^{-1}(1),\ldots, f^{-1}(m+1) \big\}$ \.
		defines a  linear extension of \ts $(P^* + z)$.  Additionally, note that
		the restriction of \ts $f$ \ts to \. $\big\{f^{-1}(m+3),\ldots, f^{-1}(m+n+1) \big\}$ \. defines a
		linear extension of \ts $(Q-y)$.
				It is also clear that  this construction defines a bijection.
		In total, we have \. $e(P^* + z)\. e(Q-y) = (m+1) \. e(P) \. e(Q-y)$
		linear extensions~$\ts f$ \ts as above.  This completes the proof. 
	\end{proof}
	
	\smallskip
	
	Following the argument in the claim we similarly have:
	\begin{align}\label{eq:Iching-2}
		e(R-z) \ = \   \. e(P-x)  \. e(Q) \. + \.   \. e(P)  \. e(Q-y).
	\end{align}
	Indeed, the term \. $e(P-x)  \. e(Q)$ \. is the number of linear extensions \ts $f \in \Ec(R)$
	for which \. $f(v)=m$, and the term \. $e(P)  \. e(Q-y)$ \. is the number of linear extensions \ts $f \in \Ec(R)$
	for which \. $f(v)=m+1$.  We omit the details.
	
	\smallskip
	
	Combing \eqref{eq:Iching-1} and \eqref{eq:Iching-2},
	we have:
	\begin{align*}
		\rho(R,z) \ = \  m \. + \. \frac{e(P) \. e(Q-y)}{e(P-x)  \. e(Q) \. + \.   \. e(P)  \. e(Q-y)} \ = \  m \ + \  \left(1  + \frac{\rho(Q,y)}{\rho(P,x)}\right)^{-1},
	\end{align*}
	as desired.
\end{proof}

\smallskip

\begin{lemma}\label{lem:KS-1}
	Let \ts $P=(X,\prec)$ \ts be a  poset on \ts $n=|X|$ elements, and let \ts $x\in \min(P)$.
	Then there exists a  poset \ts $Q=(Y,\prec')$ \ts  and an element \ts $y \in \min(Q)$,
	such that \ts $|Y|=n+1$ \ts and
	\[  \agr(Q,y) \ = \  1 \, + \, \frac{1}{\agr(P,x)}\,.\]
\end{lemma}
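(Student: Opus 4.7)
The plan is to construct $Q$ by adjoining a single new element $y$ to $X$ that is forced into one of the first two positions of any linear extension, with the second possibility occurring precisely when $f(x)=1$. Concretely, let $Y := X \cup \{y\}$ with $y \notin X$, and let $\prec'$ extend $\prec$ on $X$ by declaring
$$y \, \prec' \, p \quad \text{for all} \ \, p \in X - x,$$
while $y$ remains incomparable to $x$. Transitivity is automatic: if $y \prec' p$ and $p \prec q$, then $q \neq x$ (otherwise $p \prec x$ would contradict $x \in \min(P)$), so $y \prec' q$ by definition. By construction nothing lies below $y$, so $y \in \min(Q)$, and $|Y| = n+1$ as required.

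Next I would count $e(Q)$ by conditioning on $f(y)$ for $f \in \Ec(Q)$. Since $y$ is below every element of $X - x$, we must have $f(y) \in \{1, 2\}$, with $f(y) = 2$ forcing $f(x) = 1$ (the only element that can occupy position $1$). This splits $\Ec(Q)$ into two disjoint classes:
\begin{itemize}
\item If $f(y) = 1$, then the restriction $f|_X$ ranges over all linear extensions of $P$; this contributes $e(P)$.
\item If $f(y) = 2$, then $f(x) = 1$ and the restriction $f|_{X-x}$ shifted down by $2$ ranges over all linear extensions of $P - x$; this contributes $e(P-x)$.
\end{itemize}
Hence $e(Q) = e(P) + e(P-x)$. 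On the other hand $Q - y = P$, so $e(Q - y) = e(P)$, and dividing yields
$$\rho(Q, y) \, = \, \frac{e(P) + e(P-x)}{e(P)} \, = \, 1 \. + \. \frac{1}{\rho(P, x)}\ts,$$
as desired. The construction is clearly polynomial-time.

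There is no substantive obstacle here; this is a very short bookkeeping argument once the right construction is guessed. The only two points requiring a moment of care are (i) checking transitivity of $\prec'$, which hinges on the minimality of $x$ in $P$, and (ii) verifying that $f(y) = 2$ genuinely forces $f(x) = 1$, which also uses $x \in \min(P)$ together with the fact that $y$ dominates nothing except via the added relations. Both are immediate from the construction.
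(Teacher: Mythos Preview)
Your proof is correct and follows essentially the same approach as the paper: the construction (adjoin a new element below every element of $X-x$, incomparable to $x$) and the case analysis on $f(y)\in\{1,2\}$ are identical to the paper's argument. The only differences are cosmetic (naming the new element $y$ versus $z$, and your explicit verification of transitivity, which the paper leaves implicit).
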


\begin{proof}
	Let \ts $Y:=X+z$, and let \ts $\prec'$ \ts coincide with \ts $\prec$ \ts on~$P$,
	with added relations
	\begin{align*}
		z \ts \prec' \ts u \quad \text{for all} \ \ u \in X-x \ts, \quad  \text{and} \quad z \ts \| \ts x\ts.
	\end{align*}
	Note that \ts $z\in \min(Q)$\ts.
	Note also that
	\[ e(Q-z) \, = \, e(P) \quad \text{and} \quad e(Q) \, = \,  e(P) \. + \. e(P-x), \]
	since for every \ts $f\in \Ec(Q)$ \ts we either have \ts $f(z)=1$,
	or \ts $f(z)=2$ and thus \ts $f(x)=1$.
	We now take \. $y \gets z$, and observe that
	\[  \ag(Q,y) \ = \  \frac{e(Q)}{e(Q-z)} \ = \ \frac{e(P) + e(P-x)}{e(P)} \ = \ 1 \, + \, \frac{1}{\agr(P,x)} \,,   \]
	as desired.
\end{proof}

\smallskip

\smallskip
\begin{lemma}\label{lem:KS-2}
	Let \ts $P=(X,\prec)$ \ts be a  poset on \ts $n=|X|$ elements, and let \ts $x\in \min(P)$.
	Then there exists a  poset \ts $Q=(Y,\prec')$\ts, and \ts $y \in \min(Q)$,
	such that \ts $|Y|=n+1$ \ts and
	\[  \agr(Q,y) \ = \  1 + \agr(P,x).\]
\end{lemma}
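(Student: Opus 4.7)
The plan is to reuse the very same construction as in Lemma~\ref{lem:KS-1}, but to designate the distinguished minimal element of $Q$ to be the original element $x$ rather than the newly adjoined element. Concretely, I would set $Y := X + z$, let $\prec'$ coincide with $\prec$ on $X$, and add the relations
\[
z \, \prec' \, u \ \ \text{for all} \ \ u \in X - x\ts, \qquad z \, \| \, x\ts.
\]
Since nothing is placed below $x$ in the added relations, $x$ remains minimal in $Q$, so I would take $y := x$.

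The verification is essentially the same case analysis that appears in the proof of Lemma~\ref{lem:KS-1}, but with the roles of the two minimal elements $\{x,z\}$ of $Q$ swapped. First, deleting $y=x$ leaves a poset on $(X-x) + z$ in which $z$ lies below every remaining element, forcing it to occupy position~$1$; hence $e(Q-y) = e(P-x)$. Second, for $e(Q)$ I would examine $f(z)$ for $f \in \Ec(Q)$: the only element of $Y$ incomparable to $z$ is $x$, and no element lies below $z$, so the prefix of positions $1,\ldots,f(z)-1$ can contain only $x$. This forces $f(z)\in\{1,2\}$. The case $f(z)=1$ contributes $e(P)$ linear extensions (the remaining positions encode an arbitrary linear extension of~$P$), and the case $f(z)=2$ forces $f^{-1}(1)=x$ and contributes $e(P-x)$. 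Altogether, $e(Q) = e(P) + e(P-x)$.

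Combining these two computations gives
\[
\rho(Q,y) \ = \ \frac{e(P) + e(P-x)}{e(P-x)} \ = \ 1 \. + \. \frac{e(P)}{e(P-x)} \ = \ 1 \. + \. \rho(P,x),
\]
as required. There is no real obstacle here: the construction already appears in Lemma~\ref{lem:KS-1}, and the natural symmetry between its two minimal elements $\{x,z\}$ automatically interchanges the transformations \ts $\rho \mapsto 1 + 1/\rho$ \ts and \ts $\rho \mapsto 1 + \rho$ \ts according to which of the pair is selected as the distinguished element~$y$. The only point worth double-checking is that $x$ is indeed the \emph{unique} element of $X$ incomparable to $z$ in the new construction, which is immediate from the definition of $\prec'$.
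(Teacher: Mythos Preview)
Your proof is correct and matches the paper's approach essentially line for line: the paper also reuses the poset $Q$ from Lemma~\ref{lem:KS-1}, takes $y\gets x$, notes that $z$ is the unique minimum of $Q-x$ so that $e(Q-x)=e(P-x)$, and combines this with the already-computed $e(Q)=e(P)+e(P-x)$ to obtain $\rho(Q,y)=1+\rho(P,x)$.
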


\smallskip

\begin{proof}
	Let $Q$ be as in the proof of Lemma~\ref{lem:KS-1}.
	Note that \ts $x\in \min(Q)$, and that
	\[ e(Q-x) \, = \, e(P-x), \]
	since \ts $z$ \ts is the unique minimal element in \ts $Q-x$.
	We now take \. $y \gets x$,    and observe that
	\[  \ag(Q,y) \ = \  \frac{e(Q)}{e(Q-x)} \ = \ \frac{e(P)+e(P-x)}{e(P-x)} \ = \ 1 \. + \. \ag(P,x),   \]
	as desired.
\end{proof}

\smallskip

\begin{proof}[Proof of Theorem~\ref{thm:gCF}]
	By symmetry, we will without loss of generality assume that $n_2 \geq n_3$.
	By applying Lemma~\ref{lem:Sta-ver-pre} followed by applying Lemma~\ref{lem:KS-2}  for \ts  $n_2-n_3$ \ts  many times,
	we get a poset \ts $P=(X,\prec)$ \ts and $x \in \min(P)$
	such that
	\[ \rho(P,x) \ = \   (n_2- n_3) \ + \  \Big( n_3 \ + \  \left(1  + \tfrac{\rho(P_1,x_1)}{\rho(P_3,x_3)}\right)^{-1} \Big) \ = \ n_2 \. + \. \left(1  + \tfrac{\rho(P_1,x_1)}{\rho(P_3,x_3)}\right)^{-1}. \]
	Additionally, poset~$P$ has
	\[ |X| \ = \ n \ = \ (n_1 + n_3+1) + n_2-n_3 \  = \ n_1+ \max\{n_2,n_3\}+1 \quad \text{elements}.  \]
	On the other hand, by Lemma~\ref{lem:Sta-ver-pre} we get a poset $Q$ and $y \in \min(Q)$, s.t.\
	such that
	\[ \rho(Q,y) \ = \    n_2 \ + \  \left(1  + \tfrac{\rho(P_4,x_4)}{\rho(P_2,x_2)}\right)^{-1}, \]
	and with
	\[ m \ = \ n_2 + n_4+1 \  = \ n_4+ \max\{n_2,n_3\}+1.   \]
	
	It now follows that
	\[ \rho(P,x) \. = \.  \rho(Q,y) \qquad \Longleftrightarrow \qquad \frac{\rho(P_1,x_1)}{\rho(P_3,x_3)} \. = \. \frac{\rho(P_4,x_4)}{\rho(P_2,x_2)}\,, \]
	as desired.
\end{proof}

\medskip

%
%
%
%



\section{Verification lemma} \label{s:verify}

The proof of the Verification Lemma~\ref{l:Verify-Quart} is different
from other reductions which are given by parsimonious bijections.
Before proceeding to the proof, we need several technical and
seemingly unrelated results.

\smallskip

\subsection{Continued fractions}\label{ss:verify-CF}
Given \. $a_0\geq 0$\., \. $a_1, \ldots, a_s \in \nz $\., where  \ts $s \geq 0$,
the corresponding \defnb{continued fraction} \ts is defined as follows:
\[ [a_0\ts ; \ts a_1,\ldots, a_s] \ := \ a_0  \. +  \.  \cfrac{1}{a_1  \. +  \. \cfrac{1}{\ddots  \ +  \.  \frac{1}{a_s}}}
\]
Numbers \ts $a_i$ \ts are called \ts \defn{quotients},
see e.g.\ \cite[$\S$10.1]{HardyWright}.  We refer to \cite[$\S$4.5.3]{Knuth98}
for a detailed asymptotic analysis of the quotients in connection with
the Euclidean algorithm, and further references.  The following technical
result is key in the proof of the Verification Lemma.

\smallskip

\begin{prop}[{\rm cf.\ \cite[$\S$3]{KS21}}{}]\label{prop:genheight-cf}
Let \. $a_0,  \ldots, a_s \in \nz $\..
Then there exists a poset \ts $P=(X,\prec)$ \ts of width two
on \. $|X|=a_0+\ldots +a_s$ \. elements, and element \ts $x \in \min(P)$,
such that
	\[ \agr(P,x) \ = \  [a_0 \ts ; \ts a_1,\ldots,a_s].
    \]
\end{prop}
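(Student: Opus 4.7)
The plan is to induct on $s$, exploiting the recursive identity
\[
[a_0 ; a_1, \ldots, a_s] \, = \, a_0 \. + \. \frac{1}{[a_1; a_2, \ldots, a_s]}
\]
together with Lemmas~\ref{lem:KS-1} and~\ref{lem:KS-2}, which are exactly the moves $\rho \mapsto 1+ 1/\rho$ and $\rho \mapsto 1+\rho$ respectively, each adding a single element to the poset. So the continued fraction is realized one quotient at a time, from the inside out.

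For the base case $s=0$, I would take $P$ to be the ``L-shape'' consisting of a chain $y_1 \prec \cdots \prec y_{a_0-1}$ together with one extra element $x$ that is incomparable to every $y_i$. A direct count, inserting $x$ in any of the $a_0$ positions of the chain, gives $e(P)=a_0$ and $e(P-x)=1$, so $\rho(P,x)=a_0=[a_0]$. The poset has $a_0$ elements and width at most two, since any antichain contains at most one $y_i$.

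For the inductive step, suppose inductively that I have a width-two poset $P'$ on $a_1+\cdots+a_s$ elements with distinguished $x' \in \min(P')$ and $\rho(P',x') = [a_1; a_2,\ldots,a_s]$. I would first apply Lemma~\ref{lem:KS-1} once, obtaining a poset whose distinguished minimal element (the newly added $z$) has $\rho$-value equal to $1 + 1/[a_1; a_2,\ldots, a_s]$. Then I would apply Lemma~\ref{lem:KS-2} exactly $a_0 - 1$ times, each application leaving the distinguished element unchanged and incrementing $\rho$ by $1$. After the $a_0$ total applications, $\rho$ equals $a_0 + 1/[a_1; a_2,\ldots, a_s] = [a_0; a_1, \ldots, a_s]$ and the number of elements is $a_1+\cdots+a_s + a_0$, as required.

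The only substantive thing to verify is that width at most two is preserved under both lemmas. In each case the newly added element $z$ is placed below every element of $X-x$ and parallel only to $x$, so any antichain containing $z$ has size at most $|\{x,z\}|=2$, while antichains avoiding $z$ are inherited from the input poset. Hence width stays at most two throughout the construction. I do not expect a serious obstacle here; the only bookkeeping point is keeping track of which element plays the role of the distinguished minimum after each step (the freshly added $z$ after a Lemma~\ref{lem:KS-1} step, the previous distinguished element after each Lemma~\ref{lem:KS-2} step), and checking that it remains minimal in the enlarged poset, which follows directly from the construction.
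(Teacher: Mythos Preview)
Your proof is correct and follows essentially the same route as the paper: induct on~$s$, handle the base case with $C_{a_0-1}+\{x\}$, and for the step apply Lemma~\ref{lem:KS-1} once followed by Lemma~\ref{lem:KS-2} exactly $a_0-1$ times. The paper packages this last combination as Corollary~\ref{cor:KS}, but the content is identical, and your width-preservation argument is the same as the one given there.
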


 \smallskip

%
%

\begin{cor}
\label{cor:genheight-cf}
Let \. $a_1,\ldots a_s \in \nz$.
Then there exists a width two poset \ts $P=(X,\prec)$ \ts
on \. $|X|=a_1+\ldots +a_s$ \. elements, and element \ts $x \in \min(P)$,
such that
\[ \frac{1}{\agr(P,x)} \ = \  [0\ts ; \ts a_1,\ldots,a_s].\]
\end{cor}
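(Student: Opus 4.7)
The plan is to derive this corollary directly from Proposition~\ref{prop:genheight-cf} by a simple index shift, after observing that the reciprocal of a continued fraction is itself a continued fraction with leading quotient~$0$.

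First I would record the elementary identity
\[
[0\ts;\ts a_1,\ldots,a_s] \ = \ 0 \. + \. \cfrac{1}{a_1 + \cfrac{1}{\ddots + \frac{1}{a_s}}} \ = \ \frac{1}{[a_1\ts;\ts a_2,\ldots,a_s]}\ts,
\]
valid whenever $a_1,\ldots,a_s\in\nz$, so that $[a_1;a_2,\ldots,a_s]>0$ and the reciprocal is defined.

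Next I would apply Proposition~\ref{prop:genheight-cf} to the shifted sequence of quotients $(a_1,a_2,\ldots,a_s)$, playing the roles of $(a_0,a_1,\ldots,a_{s-1})$ in the proposition. Since each $a_i\in\nz$, the hypotheses of the proposition are satisfied. This yields a poset $P=(X,\prec)$ of width two on
\[
|X| \ = \ a_1 + a_2 + \ldots + a_s
\]
elements, together with an element $x\in\min(P)$, such that
\[
\agr(P,x) \ = \ [a_1\ts;\ts a_2,\ldots,a_s]\ts.
\]

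Taking reciprocals and combining with the identity above gives
\[
\frac{1}{\agr(P,x)} \ = \ \frac{1}{[a_1\ts;\ts a_2,\ldots,a_s]} \ = \ [0\ts;\ts a_1,\ldots,a_s]\ts,
\]
which is the assertion of the corollary; the element count $|X|=a_1+\ldots+a_s$ is precisely what is required. There is no real obstacle here since all the work is hidden inside Proposition~\ref{prop:genheight-cf}; the only step that needs any care is confirming the input conditions of the proposition are met by the shifted sequence, which is immediate from $a_1,\ldots,a_s\in\nz$.
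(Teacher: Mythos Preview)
Your proof is correct and follows exactly the same approach as the paper: the paper's proof is the single line ``This follows from \. $[a_1; \ts a_2,\ldots, a_s] \ts = \ts  [0\ts ; \ts a_1,\ldots, a_s]^{-1}$,'' and you have simply spelled out this identity and the application of Proposition~\ref{prop:genheight-cf} to the shifted sequence in more detail.
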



\begin{proof}  This follows from \. $[a_1; \ts a_2,\ldots, a_s] \ts = \ts  [0\ts ; \ts a_1,\ldots, a_s]^{-1}$.
\end{proof}

\smallskip

\begin{rem}
Proposition~\ref{prop:genheight-cf} was proved implicitly in \cite[$\S$3]{KS21}.
Unfortunately, the notation and applications in that paper are very different
from ours, so we chose to include a self-contained proof for completeness.
\end{rem}

\smallskip


We now present the proof of Proposition~\ref{prop:genheight-cf}, which uses the following corollary of Lemma~\ref{lem:KS-1} and Lemma~\ref{lem:KS-2}.

\smallskip

\begin{cor}\label{cor:KS}
	Let \ts $P=(X,\prec)$ \ts be a width two poset on \ts $n=|X|$ elements, let \ts $x\in \min(P)$,
and let \ts $a \in \nz$.
	Then there exists a width two poset \ts $Q=(Y,\prec')$ \ts and \ts $y \in \min(Q)$,
such that \ts $|Y|=n+a$ \ts and
	\[  \agr(Q,y) \ = \  a \. + \. \frac{1}{\agr(P,x)}.\]
\end{cor}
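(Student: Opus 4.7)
The plan is to derive Corollary~\ref{cor:KS} by composing the two lemmas in the obvious way: first apply Lemma~\ref{lem:KS-1} once, then iterate Lemma~\ref{lem:KS-2} exactly $a-1$ times. Concretely, starting from $(P,x)$ with $\agr(P,x)$, Lemma~\ref{lem:KS-1} produces $(Q_1,y_1)$ on $n+1$ elements with
\[ \agr(Q_1,y_1) \, = \, 1 \. + \. \tfrac{1}{\agr(P,x)}\.. \]
Repeated application of Lemma~\ref{lem:KS-2}, each time with the most recently chosen minimal element in the role of the distinguished minimum, adds one element and adds~$1$ to the relative ratio. After $a-1$ further steps we obtain a poset on $n+1+(a-1)=n+a$ elements whose distinguished minimal element $y$ satisfies
\[ \agr(Q,y) \, = \, (a-1) \. + \. \Big(1 \. + \. \tfrac{1}{\agr(P,x)}\Big) \, = \, a \. + \. \tfrac{1}{\agr(P,x)}\ts, \]
which is exactly the identity claimed.

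The one nonroutine point is the \emph{width preservation}, which must be checked since the statement constrains $Q$ to have width two. Inspecting the constructions in the proofs of Lemmas~\ref{lem:KS-1} and~\ref{lem:KS-2}: in both cases one appends a single new element $z$ that is forced to lie below every element of $X-x$, while being left parallel only to~$x$. Thus in the new poset, any antichain avoiding $z$ is already an antichain of $P$ (so has size $\leq 2$), and any antichain containing $z$ can only contain elements parallel to~$z$, i.e.\ at most $z$ together with~$x$. Hence the width stays~$\leq 2$. Since each step of our iteration applies one of these two constructions, every intermediate poset, and in particular the final~$Q$, has width at most two.

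This is the main (and only) obstacle: once width preservation is verified, the corollary is an immediate telescoping of the two lemmas. The element count is immediate since each application increases the size by exactly one, and both lemmas guarantee that the chosen distinguished element lies in $\min(\cdot)$ of the new poset, so the hypotheses for the next step are automatically in force.
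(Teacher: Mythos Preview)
Your proof is correct and follows the same approach as the paper: apply Lemma~\ref{lem:KS-1} once and Lemma~\ref{lem:KS-2} exactly $a-1$ times, checking that width is preserved at each step. Your width-preservation argument is more explicit than the paper's one-line remark, but the content is the same.
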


\begin{proof}
Use Lemma~\ref{lem:KS-1} once, and Lemma~\ref{lem:KS-2} \. $(a-1)$ \. times.
Also note that the operations used in Lemma~\ref{lem:KS-1} and Lemma~\ref{lem:KS-2} do not increase the width of the poset $Q$ if the input poset $P$ is not a chain.
\end{proof}

\smallskip

\begin{proof}[Proof of Proposition~\ref{prop:genheight-cf}]
	We use induction on \ts $s$.
	For \ts $s=0$, let \. $P := C_{a_0-1} + \{x\}$ \. be a disjoint sum of two chains, and
observe that  \. $\ag(P,x) = a_0$.
	
	Suppose the claim holds for \ts $s-1$, i.e.\ there exists a poset \ts $P_1$ \ts
on \. $n=a_1+\ldots+a_s$ \. elements and \ts
$x_1 \in \min(P_1)$,  such that \. $\ag(P_1,x_1) \ts = \ts  [a_1 \ts ; \ts a_2,\ldots, a_s]$,
	and with \. $|P_1|=a_1+\ldots+a_s$.
	By Corollary~\ref{cor:KS}, there exists a poset \ts $Q$ \ts on \ts $a_0+n$ \ts elements,
and \ts $x \in \min(P)$, such that
	\begin{align*}
		\ag(P,x) \, = \,  a_0 \, + \, \frac{1}{\ag(P_1,x_1)} \, = \,
a_0 \, + \, \frac{1}{[a_1 \ts ; \ts a_2,\ldots, a_s]} \, = \, [a_0 \ts ; \ts a_1,\ldots,a_s].
	\end{align*}
This completes the proof.
\end{proof}

\smallskip

\subsection{Number theoretic estimates} \label{ss:verify-NT}
For \ts $A\in \zz_{\geq 1}$ \ts and \ts $m \in [A]$,
consider the quotients in the continued fraction of \ts $m/A$ \ts and their sum:
\[\frac{m}{A} \, = \, [0 \ts ; \ts a_1(m),\ldots, a_s(m)] \quad \text{and} \quad S_A(m) \, := \, \sum_{i=1}^s \. a_i(m)\..\]
Note that every rational number can be represented by continued fractions in two ways (depending if the last quotient is strictly greater than $1$, or is equal to $1$), and   \. $S_{A}(m)$ \. are equal for both representations.
Also note that
\begin{equation}\label{eq:cf-irr}
	S_{A}(m) \, = \,  S_{A'}(m'), \qquad \text{where} \quad A' \. := \. \frac{A}{\gcd(A,m)}
\quad \text{and} \quad m'\. := \. \frac{m}{\gcd(A,m)}
\end{equation}
are normalized to be coprime integers.
The following technical result will also be used in the  proof of the
Verification Lemma~\ref{l:Verify-Quart}.

\smallskip

\begin{prop}\label{p:NTD}
	There exists a constant \ts $C>0$, such that for all coprime integers \ts $A,B$ \ts which satisfy \. $C < B < A < 2B$,
	there exists an integer \. $m:=m(A,B)$ \. such that \. $m<B$,
	\[ S_A(m) \ \leq \  2 \. (\log A)^2 \quad \text{ and } \quad   S_B(m) \ \leq \  2 \. (\log B)^2.  \]
\end{prop}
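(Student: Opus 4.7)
My plan is to prove Proposition~\ref{p:NTD} by an averaging argument combined with Markov's inequality. The key input is the classical estimate for the average sum of partial quotients (see Knuth, \emph{The Art of Computer Programming}, Vol.~2, \S4.5.3, or Heilbronn and Yao--Knuth): there is an absolute constant $c_0 < \tfrac{2}{3}$ such that for every sufficiently large $N$,
\begin{equation*}
\frac{1}{N}\sum_{m=1}^{N-1} S_N(m) \, \le \, c_0\cdot(\log N)^2.
\end{equation*}
With $\log = \log_2$ the sharp asymptotic gives $c_0 = 6\ln 2 / \pi^2 \approx 0.42$, comfortably below $\tfrac{2}{3}$. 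I will fix the constant $C$ in the proposition to be just large enough that this estimate holds for all $N \ge C$.

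Markov's inequality applied to the above estimate says that for each $N \ge C$, the set of \emph{bad} residues
\[
\cB_N \, := \, \big\{\ts m \in [1, N-1] \. : \. S_N(m) > 2\ts(\log N)^2 \ts\big\}
\]
has cardinality at most $\tfrac{c_0}{2}\ts N$. Since $[1, B-1] \subset [1, A-1]$, the number of $m \in [1, B-1]$ that are bad for either $A$ or $B$ is at most
\[
|\cB_A| \. + \. |\cB_B| \, \le \, \tfrac{c_0}{2}\,(A + B) \, < \, \tfrac{3 c_0}{2}\, B \, < \, B,
\]
using $A < 2B$ in the second inequality and $c_0 < \tfrac{2}{3}$ in the last. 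Hence some $m \in [1, B-1]$ avoids both $\cB_A$ and $\cB_B$; this $m$ is automatically strictly less than~$B$ and satisfies the two quotient-sum bounds, so it serves as the desired $m(A,B)$.

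The only step with real substance is pinning down the averaging inequality with an explicit constant $c_0 < \tfrac{2}{3}$, and this is precisely what forces the specific constant $2$ in the statement together with the hypothesis $A < 2B$. The sharp Yao--Knuth asymptotic is overkill here; even a direct derivation via Stern--Brocot or Farey estimates is elementary, and I do not anticipate any combinatorial or number-theoretic obstacle beyond fixing this constant.
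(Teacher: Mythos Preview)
Your proposal is correct and follows essentially the same argument as the paper: both invoke the Yao--Knuth average $\frac{1}{N}\sum_m S_N(m)\sim \frac{6}{\pi^2}(\log N)^2$, apply Markov's inequality to bound the number of bad $m$ for each of $A$ and $B$, and use $A<2B$ so that the union of bad sets has size $<B$. The only cosmetic difference is that the paper phrases the final count via the complement $\vartheta(A,B)\ge B(1-\tfrac{9}{\pi^2})(1-o(1))>0$, whereas you bound the bad set directly; your stated constant $6\ln 2/\pi^2$ (for $\log_2$) should read $6(\ln 2)^2/\pi^2$, but since both are below $\tfrac{2}{3}$ this does not affect the argument.
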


\smallskip


We now build toward the proof of this result.  
We need the following technical result.

\smallskip

\begin{lemma}[{\rm Yao--Knuth \cite{YK75}}{}]\label{lem:Knuth}
We have:
	\[  \frac{1}{n} \. \sum_{m \ts \in \ts [n]}  S_n(m) \ = \
\frac{6}{\pi^2} \. (\log n)^2 \. + \. O\big((\log n) (\log \log n)^2\big) \quad \text{as \ \  $n\to \infty$}.\]
\end{lemma}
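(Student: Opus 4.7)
The plan is to prove this as a classical estimate in the metric theory of continued fractions, following the route opened by Heilbronn's work on Euclidean algorithm complexity and refined by Yao and Knuth. The starting observation is that $S_n(m)$ counts, with multiplicity, the partial quotients produced when the Euclidean algorithm is run on the pair $(m,n)$, so the double sum
\[
T(n) \, := \, \sum_{m \in [n]} S_n(m)
\]
equals the total weight of all quotients appearing across all $n$ trajectories of the Euclidean algorithm on inputs $(m,n)$, $m \in [n]$.

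The key step is to swap the order of summation: instead of walking along trajectories, I would count, for each integer $k \geq 1$, the number of pairs $(m,n)$ for which $k$ appears as a partial quotient at some stage of the reduction, weighted by $k$ itself. Using the correspondence between partial quotients and the Stern--Brocot/Farey tree, each occurrence of a quotient $k$ at depth $j$ corresponds to a Farey interval of prescribed shape, so the count reduces to a weighted lattice-point estimate. This is exactly where the connection to Dedekind sums enters, and it allows one to compare $T(n)$ with the natural continuous model where the quotients are distributed according to the Gauss--Kuzmin measure.

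The constant $6/\pi^2$ then appears in a transparent way. The average length of the continued fraction expansion of $m/n$ is $\frac{12 \log 2}{\pi^2}\log n + O(1)$ by Heilbronn's theorem, while the truncated mean of a Gauss--Kuzmin-distributed quotient up to size $n$ is $\frac{1}{\log 2}\log n + O(1)$; their product, adjusted by the factor $\tfrac{1}{2}$ coming from summation over $m$ rather than over positions along a trajectory, yields $\frac{6}{\pi^2}(\log n)^2$. (Equivalently, one recognizes $\zeta(2) = \pi^2/6$ as the normalizing constant of the relevant transfer operator.)

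The main obstacle will be pinning down the error term $O((\log n)(\log \log n)^2)$ rather than a cruder $O((\log n)^{3/2})$ or similar. This requires splitting the quotients into "small" ones (bounded by $(\log n)^{O(1)}$), which can be handled by smooth equidistribution of the Gauss map and explicit spectral gap estimates for the Gauss--Kuzmin--Wirsing operator, and "large" ones, which must be controlled by a union bound over the rare events where a large quotient occurs. The $(\log \log n)^2$ factor arises precisely from optimizing the truncation threshold between these two regimes. Since the full quantitative argument is carried out in \cite{YK75}, I would cite that reference for the detailed bookkeeping and use only the stated estimate in the sequel.
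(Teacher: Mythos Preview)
The paper does not prove this lemma; it is quoted as a known result of Yao and Knuth \cite{YK75} and used as a black box. Your proposal ultimately does the same thing --- you close by deferring the ``detailed bookkeeping'' to \cite{YK75} --- so at the level of what is actually being claimed and used, you and the paper are aligned.

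That said, the heuristic you offer for the constant is not the route taken in \cite{YK75}, and parts of it are shaky as written. The Yao--Knuth argument is elementary and combinatorial: it exploits the interpretation of $S_n(m)$ as the number of steps of the \emph{subtractive} Euclidean algorithm on $(m,n)$, rewrites $\sum_m S_n(m)$ as a lattice-point/divisor-type double sum, and estimates it directly --- no Gauss--Kuzmin--Wirsing operator, no spectral gap, no ergodic theory. Your ``average length times truncated mean quotient, adjusted by a factor $\tfrac12$'' explanation lands on the right constant, but the justification for that factor (``summation over $m$ rather than over positions'') is not a real argument: the truncation level for the quotients is not uniformly $n$ along a trajectory, and the correlations between length and quotient sizes are exactly what a rigorous proof must control. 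Likewise, the $(\log\log n)^2$ in \cite{YK75} does not come from a small/large quotient dichotomy optimized against a spectral bound; it emerges from the error terms in the elementary divisor-sum estimates. None of this matters for the paper's purposes, since only the stated asymptotic is used, but if you intend the sketch as an outline of a proof rather than as intuition, it would need substantial revision.
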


\smallskip

By the Markov inequality, it follows from Lemma~\ref{lem:Knuth} that
\begin{equation}\label{eq:Knuth}
\begin{split}
  \big|\big\{ m \in [n] \, : \,  S_n(m) \. > \. 2 \. (\log n)^2 \.  \big\}\big|
& \  \leq  \  \frac{3}{\pi^2} \. n \. (1+o(1)).
\end{split}
\end{equation}

\smallskip

\begin{proof}[Proof of Proposition~\ref{p:NTD}]
	Denote
	\begin{align*}
		\vt(A,B) \, := \, \big|\{ m \in [B] \, : \,  S_A(m) \leq
2 \. (\log A)^2, \ S_B(m) \leq 2 \. (\log B)^2 \  \} \big| \ts.
	\end{align*}
To prove the result, it sufficed to show that
$$\vt(A,B) \, = \, \Omega\big(B\big) \quad \text{as \ \ $C\. \to \. \infty$\ts.}
$$

Now, it follows from the inclusion-exclusion principle, that
\[
\vt(A,B) \ \ge \ B \. - \.   \big|\big\{ m \in [B] \, : \,  S_A(m) >
2 \. (\log A)^2 \  \big\} \big| \. - \.  \big|\big\{ \. m \in [B]  \, : \,   \ S_B(m) > 2 \. (\log B)^2 \.  \big\} \big|\ts.
\]
On the other hand, we have:
\begin{align*}
\big|\{ m \in [B]  \, : \,  S_A(m) >
2 \. (\log A)^2 \  \} \big| \ \leq \  \big|\{ m \in [A]  \, : \,  S_A(m) >
2 \. (\log A)^2 \  \} \big| \ \leq_{\eqref{eq:Knuth}} \ \tfrac{3}{\pi^2} \. A \. (1+o(1)),
\end{align*}
and
\begin{align*}
	\big|\{ m \in [B]  \, : \,   S_B(m) >
	2 \. (\log B)^2 \  \} \big| \ \leq_{\eqref{eq:Knuth}} \ \tfrac{3}{\pi^2} \. B \. (1+o(1)).
\end{align*}
Combining these inequalities, we get
\begin{align*}
\vt(A,B) \ \geq \ B  \.  - \. \tfrac{3}{\pi^2} \. (A +B) \big(1+o(1)\big) \  \geq  \ B \big(1-\tfrac{9}{\pi^2} \big) \big(1-o(1)\big),
\end{align*}
and the result follows since \. $\big(1-\frac{9}{\pi^2}\big) > 0$.
\end{proof}


\begin{rem}
The proof of Proposition~\ref{p:NTD} does not give a (deterministic)
polynomial time algorithm to find the desired~$m$, i.e.\ in \ts poly$\ts (\log A)$ \ts time.
There is, however, a relatively simple \emph{probabilistic} \ts polynomial
time algorithm, cf.\ \cite[Rem.~5.31]{CP23}.  Most recently, we were able to improve
upon the estimate in Proposition~\ref{p:NTD} using Larcher's bound, see \cite[$\S$1.5]i{CP-CF}. 
\end{rem}

\smallskip

\subsection{Bounds on relative numbers of linear extensions}\label{ss:verify-relative-bounds}
The following simple bound is the final ingredient we need for the proof of the Verification Lemma.

\smallskip

\begin{prop}[{\rm see \cite{CPP-Quant-CPC,EHS}}{}]\label{p:g-bound}
Let \ts $P=(X,\prec)$ \ts be a poset on \ts $|X|=n$ \ts elements, and let \ts $x\in \min(X)$.
Then \. $1 \leq \ag(P,x) \leq n$.  Moreover, \ts $\ag(P,x)=1$ \ts if an only if \ts $\min(P)=\{x\}$,
i.e.\ \ts $x$ \ts is the
unique minimal element.
\end{prop}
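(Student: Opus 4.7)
The strategy is to interpret $\rho(P,x) = e(P)/e(P-x)$ via two natural maps between $\Ec(P)$ and $\Ec(P-x)$: a ``prepend $x$'' map in one direction (yielding the lower bound) and a ``delete $x$'' map in the other direction (yielding the upper bound). Fix a poset $P=(X,\prec)$ on $|X|=n$ elements and an element $x\in\min(P)$.

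For the lower bound $\rho(P,x)\ge 1$, I would define a map $\iota:\Ec(P-x)\to\Ec(P)$ by sending $g\in\Ec(P-x)$ to the linear extension that assigns $x$ to position $1$ and shifts all other labels up by one. Since $x\in\min(P)$, the resulting map $f$ satisfies $f(x)=1<f(y)$ for every $y\succ x$, so $\iota(g)\in\Ec(P)$. The map $\iota$ is clearly injective, hence $e(P-x)\le e(P)$, i.e.\ $\rho(P,x)\ge 1$. For the equality characterization, note that $\iota$ is surjective if and only if every $f\in\Ec(P)$ satisfies $f(x)=1$, which in turn holds if and only if $x$ is the unique minimal element of $P$. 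Thus $\rho(P,x)=1$ iff $\min(P)=\{x\}$.

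For the upper bound $\rho(P,x)\le n$, I would consider the ``delete $x$'' map $\pi:\Ec(P)\to\Ec(P-x)$ obtained by removing $x$ and compressing the remaining labels to $[n-1]$. For a fixed $g\in\Ec(P-x)$, a preimage $f\in\pi^{-1}(g)$ is determined by the position $k\in[n]$ at which $x$ is inserted, and $f$ is a valid linear extension of $P$ iff $k\le g(y)$ for every $y\succ x$ in $P$ (so that $y$ still receives a larger label than $x$ after insertion). Therefore
\[
\bigl|\pi^{-1}(g)\bigr| \ = \ \min\bigl(\{n\}\cup\{g(y) : y\succ x\}\bigr) \ \le \ n\ts,
\]
which gives $e(P)\le n\cdot e(P-x)$, i.e.\ $\rho(P,x)\le n$.

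No step is a serious obstacle; the only point that requires a line of care is the counting of fibers of $\pi$, where one must check that the insertion condition is exactly $k\le\min_{y\succ x}g(y)$ (with the convention that the minimum is $n$ if $x$ has no successors). Putting the two bounds together with the equality analysis of $\iota$ proves the proposition.
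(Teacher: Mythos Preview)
Your proof is correct and takes essentially the same approach as the paper: the same ``prepend $x$'' injection for the lower bound and equality case, and the same ``delete $x$'' map for the upper bound (the paper phrases the latter as an injection $\Ec(P)\hookrightarrow\Ec(P-x)\times[n]$, which is just your $\pi$ together with the recorded position $f(x)$). Your fiber-size computation is a slight refinement, but the argument is the same.
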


\smallskip

The lower bound holds for all \ts $x\in X$, see e.g.\ \cite{EHS}.
The upper bound is a special case of \ts \cite[Lem.~5.1]{CPP-Quant-CPC}.
We include a short proof for completeness.

\smallskip

\begin{proof}
The lower bound \. $e(P-x) \le e(P)$ \. follows from the injection \. $\Ec(P-x) \to \Ec(P)$ \.
that maps \ts $f\in \Ec(P-x)$ \ts into \ts $g\in \Ec(P)$ \ts by letting \ts $g(x)\gets 1$, \. $g(y)\gets f(x)+1$ \.
for all \ts $y\ne x$.  For the second part, note that \. $e(P)-e(P-x)$ \. is the number of \ts $f\in \Ec(P)$ \ts
such that \ts $f(x) >1$, so \. $e(P)-e(P-x)=0$ \. implies \.  $\min(P)=\{x\}$.

The upper bound \. $e(P) \le n \ts e(P-x)$ \.
follows from the injection \. $\Ec(P) \to  \Ec(P-x) \times [n] $ \.
that maps \ts $g\in \Ec(P)$ \ts into a pair \ts $\big(f, g(x)\big)$ \ts where \ts $f\in \Ec(P-x)$ \ts
is defined as \ts $f(y)\gets g(y)$ \ts if \ts $g(y)<g(x)$, \. $f(y)\gets g(y)-1$ \ts if \ts $g(y)>g(x)$.
\end{proof}

\smallskip

\subsection{Proof of Verification Lemma~\ref{l:Verify-Quart}}\label{ss:verify-proof}
Recall the decision problem
\begin{equation*}
	\text{\sc VerRLE} \ := \ \big\{ \.	
	\ag(P,x) \.  =^? \,
	\tfrac{A}{B}
	\. \big\},
\end{equation*}
where \ts $P=(X,\prec)$ \ts is a poset on \ts $n=|X|$ \ts elements, \ts $x\in \min(P)$,
and \ts $A,B$ are coprime integers with \ts $B< A\leq n!$\..
We simulate \. {\sc VerRLE} \. with an oracle for \. {\sc QuadRLE} \. as follows.

By Proposition~\ref{p:g-bound},  we need only to consider the cases \. $1 < \frac{A}{B} \leq n$.
Indeed, when \ts $\ag(P,x)<1$ \ts or \ts $\ag(P,x)>n!$, \. {\sc VerRLE} \ts does not hold.
Additionally, when \ts $\ag(P,x)=1$, \. {\sc VerRLE} \ts holds if and only if \ts $P$ \ts is a chain.
Let \ts $k :=  \left\lfloor \tfrac{A}{B} \right \rfloor$.
As in the $s=0$ part of the proof of Proposition~\ref{prop:genheight-cf},
there exists a poset \ts $P_3=(X_3,\prec_3)$ \ts with $|X_3| = k \le n$,
and an element \ts $x_3 \in \min(P_3)$, such that \. $\ag(P_3,x_3) = k$.


Let $A',B'$ be coprime integers such that
$$\frac{A}{B} \, =  \, k \. \frac{A'}{B'}\,.
$$
Then we have \. $B\le B'<A'<2B'$, \. $A'\le A$ \. and thus \. $\log A' = O(n \log n)$.
By Proposition~\ref{p:NTD},
there is a positive integer \. $m \in [B']$, such that
\[ S_{A'}(m) \ \leq \  2 \. (\log A')^2 \. \quad \text{ and } \quad   S_{B'}(m) \ \leq \  2 \. (\log B')^2 .  \]
At this point we \emph{guess} \ts such~$m$.  Since computing the quotients of \ts $m/A'$ \ts
can be done in polynomial time, we can verify in polynomial time that \ts $m$ \ts satisfies
the inequalities above.

By Corollary~\ref{cor:genheight-cf}, we can construct posets \ts $P_2=(X_2,\prec_2)$,
\ts $P_4=(X_4,\prec_4)$ \ts with \ts
$x_2\in \min(P_2)$, \ts $x_4\in \min(P_2)$,
such that
 \[
 \ag(P_2,x_2) \, = \,  \frac{B'}{m} \qquad \text{and} \qquad \ag(P_4,x_4) \, = \, \frac{A'}{m} \,.
 \]
The corollary also gives us
 \[ |X_2|  \, \leq \, S_{B'}(m) \, \leq \, 2 (\log B')^2 \.  \, = \, O\big(n^2 (\log n)^2\big),
 \]
and we similarly have \. $|X_4| = O\big(n^2 (\log n)^2\big)$.
Since posets \ts $P_2,P_3$ \ts and \ts $P_4$ \ts have polynomial size, we can call \ts {\sc QuadRLE} \ts
to check
\begin{equation*}
	\big\{ \.	
	\ag(P,x) \cdot  \ag(P_2,x_2) \, =^? \,
	\ag(P_3,x_3) \cdot  \ag(P_4,x_4)
	\. \big\}.
\end{equation*}
Observe that
$$  \frac{\ag(P_3,x_3) \cdot \ag(P_4,x_4)}{\ag(P_2,x_2)} \ = \  \frac{m}{B'} \. \cdot \.  k \. \cdot \. \frac{A'}{m} \ = \  \frac{A}{B}\,.
$$
Thus, in this case \ts {\sc QuadRLE} \ts is equivalent to \ts {\sc VerRLE}, as desired. \qed

\smallskip

\begin{rem} \label{r:CF}
In our recent paper \cite{CP24-CF}, we use ideas from the proof above
to obtain further results for relative numbers of linear extensions.
We also use stronger number theoretic estimates than those given by
Lemma~\ref{lem:Knuth}.
\end{rem}



\medskip

\section{Fixing one element}\label{s:Sta1}

In this section we prove Theorem~\ref{t:ESta-1}.  The proof relies heavily
on~\cite{MS22}.  We also need the definition and basic properties of the
\defng{promotion} \ts and \defng{demotion} \ts operations on
linear extensions, see e.g.\ \cite{Sta-promo} and \cite[$\S$3.20]{Sta-EC}.

\subsection{Explicit equality conditions} \label{ss:Sta1-equality}
For \ts $k=1$, the equality cases of Stanley's
inequality~\eqref{eq:Sta} are tuples \. $(P,x,z,a,c)$, \ts where
\ts $P=(X,\prec)$ \ts is a poset on \ts $n=|X|$ \ts elements,
\ts $x,z\in X$,  \ts $a,c \in [n]$, and  the following holds:
\begin{equation}\label{eq:Sta1}
\aN_{z\ts c}(P, x,a)^2 \, = \, \aN_{z\ts c}(P, x,a+1)\cdot  \aN_{z\ts c}(P, x,a-1).
\end{equation}
The subscripts here and throughout
this section are no longer bold, to emphasize that \ts $k=1$.
Recall also both the notation in~$\S$\ref{ss:intro-Stanley}, and the Ma--Shenfeld
poset notation in~$\S$\ref{ss:main-MS}.

\smallskip

\begin{lemma}\label{lem:k=1}
Let \ts $P=(X,\prec)$ \ts be a poset on \ts $n=|X|$ \ts elements, let \ts $x,z\in X$, \ts $a,c\in [n]$.
Then the equality \ts \eqref{eq:Sta1} \ts is equivalent to:

\smallskip

\qquad
$(\divideontimes)$ \quad for every \ts $f \in \Ec_{z\ts c}(P,x,a+s)$, \ts  $s \in \{0,\pm 1\}$,
we have \. $x \. \| \, \lc(f)$  \. and \.  $x \. \| \, \uc(f)$.	
\end{lemma}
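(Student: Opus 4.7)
The plan is to combine the Ma--Shenfeld characterization \eqref{eq:MS}---which says Stanley equality is equivalent to the flatness identity $\aN_{z\ts c}(P,x,a-1) = \aN_{z\ts c}(P,x,a) = \aN_{z\ts c}(P,x,a+1)$---with single-step promotion/demotion operations on linear extensions.

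For the forward direction $(\divideontimes) \Rightarrow$ Stanley equality, I would establish flatness directly. Given $f$ in any of the three sets $\Ec_{z\ts c}(P,x,a+s)$ for $s \in \{-1,0,1\}$, swapping the values $f(x)$ and $f(y)$, where $y$ is the element at position $f(x)\pm 1$, produces an element of the neighboring set provided $y\|x$; this is precisely the incomparability hypothesis of $(\divideontimes)$ for the relevant companion $y\in\{\lc(f),\uc(f)\}$. Composing these swaps yields bijections $\Ec_{z\ts c}(P,x,a-1) \leftrightarrow \Ec_{z\ts c}(P,x,a) \leftrightarrow \Ec_{z\ts c}(P,x,a+1)$, from which flatness follows and \eqref{eq:MS} closes the argument. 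Degenerate subcases need separate checks: when a companion coincides with $z$ (i.e., $c\in\{a-1,a+1\}$), either the relevant set is vacuously empty or $(\divideontimes)$ itself forces $x\|z$ and the swap remains legal; when $a\in\{1,n\}$ is at the boundary of $[n]$, one neighboring set is empty and the equivalence is vacuous.

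For the converse Stanley equality $\Rightarrow (\divideontimes)$, I would argue by contrapositive. Assuming $(\divideontimes)$ fails, the duality $P\leftrightarrow P^\ast$ lets me reduce to the case where some $f^\ast \in \Ec_{z\ts c}(P,x,a)$ satisfies $u := \uc(f^\ast) \succ x$. The partial single-step bijections from the forward direction yield counting identities $|B^L_a| = |B^L_{a-1}|$ and $|B^U_a| = |B^U_{a+1}|$, where $B^L_\bullet$, $B^U_\bullet$ denote the subsets on which the corresponding single swap is blocked; a three-cycle bijection on the triple of positions $\{a-1,a,a+1\}$ provides a further identity on the fully incomparable subsets. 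The decisive step is then to condition on the position of $z$: the single constraint $f(z)=c$ partitions the remaining elements into a lower block $\{y : f(y) < c\}$ and an upper block $\{y : f(y) > c\}$, the obstructing element $u$ sits in one specific block, and the Shenfeld--van Handel $k=0$ equality characterization from \cite{SvH-acta} applied to that block yields a strict log-concavity inequality incompatible with flatness.

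The main obstacle is precisely this converse direction: the partial-bijection bookkeeping alone leaves open the scenario where the bad sets $B^L_\bullet, B^U_\bullet$ are nonempty but match in cardinality, which is a priori consistent with flatness. Closing this gap is the technical heart of the proof, and relies on the $k=1$-specific decoupling of $P$ around $z$ into two blocks on which the $k=0$ theorem applies, thereby promoting the local defect at $f^\ast$ into a global asymmetry that breaks flatness.
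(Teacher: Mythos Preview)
Your forward direction $(\divideontimes)\Rightarrow$ equality is fine: the swap bijections give flatness, and flatness trivially gives $\aN(a)^2=\aN(a-1)\aN(a+1)$. The converse is where the real content is, and your plan there has a genuine gap. The ``decoupling of $P$ around $z$ into two blocks'' is not well-defined: the set $\{y:f(y)<c\}$ depends on the linear extension~$f$, not just on~$P$, so there is no fixed sub-poset to which the $k=0$ theorem can be applied. Even if you condition on the lower set $S=f^{-1}\{1,\dots,c-1\}$ and write $\aN_{zc}(P,x,a+s)=\sum_S w_S\cdot \aN^{(S)}(a-c+s)$ with each summand log-concave by the $k=0$ result, flatness of the \emph{sum} does not force flatness of each term (sums of log-concave sequences need not be log-concave), so a bad companion in one block does not by itself contradict global flatness. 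This is exactly the ``nonempty but matching bad sets'' scenario you flag, and your proposed fix does not close it.

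The paper's argument is structurally different and leans heavily on the Ma--Shenfeld theory~\cite{MS22}. One first reduces to split-indecomposable posets (Lemma~\ref{lem:subcrit-prop}); these are then critical, and by \cite[Thm~1.5]{MS22} Stanley equality already forces $\aF(a{+}s,\comp,\comp)=0$ and equality of all six mixed counts $\aF(a{+}s,\comp,\inc),\aF(a{+}s,\inc,\comp)$. The only thing left is to show these six numbers vanish when $k=1$. If not, $P$ is \emph{sharp critical}, and the $k=1$ structure (Lemma~\ref{lem:crit-prop}) pins down a unique splitting pair and the count~\eqref{eq:split-3}. From a witness $f\in\cF(a,\comp,\inc)$ and $h\in\cF(a-1,\comp,\inc)$ one then runs a targeted promotion/demotion argument (three cases on the location of a maximal element $w$ incomparable to the obstruction~$y_2$) to manufacture an $f'\in\cF(a,\comp,\comp)$, contradicting $\aF(a,\comp,\comp)=0$. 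The crux is thus not a reduction to $k=0$ at all, but the Ma--Shenfeld critical classification together with a bespoke promotion argument exploiting~\eqref{eq:split-3}.
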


\smallskip

We prove Lemma~\ref{lem:k=1} later in this section.

\smallskip

\begin{rem}\label{r:Sta1-MS}
For the case~$k=0$, the analogue of $(\divideontimes)$ 
that companions of $f$ are incomparable to $x$, was proved
in~\cite[Thm~15.3(c)]{SvH-acta}.  However, $(\divideontimes)$ fails for \ts $k= 2$,
as shown in the ``hope shattered'' Example~1.4 in~\cite{MS22}.
Thus, Lemma~\ref{lem:k=1} closes the gap between these two results.
See~$\S$\ref{ss:finrem-LE} for potential complexity implications of this observation.

Note also that condition $(\divideontimes)$ is in~$\ts \P$ \ts since can be
equivalently described in terms of explicit conditions on the partial order
(rather than in terms of linear extensions of the poset). This is proved
in \cite[Thm~15.3{}\ts{}(d)]{SvH-acta} for \ts $k=0$, and in \cite[Eq.~(1.6)]{MS22}
for \ts $k=1$.
\end{rem}

\smallskip

\begin{proof}[{Proof of Theorem~\ref{t:ESta-1}}]
As before, let \ts $P=(X,\prec)$ \ts be a poset on \ts $n=|X|$ \ts elements,
let \ts $x,y,z\in X$ \ts and \ts $a,b,c\in [n]$.  Denote by \.
$\aN_{z c}(P,x,a,y,b)$ \. the number of linear extensions \.
$f \in \Ec_{z \ts c}(P,x,a)$ \ts that additionally satisfy \ts $f(y)=b$.

Now, condition~$(\divideontimes)$ in  Lemma~\ref{lem:k=1}, can be rewritten as follows:
		\begin{equation}\label{eq:sapporo-1}
		 \aN_{z \ts c}(P,x,a',y,b') \. = \. 0  \quad \text{for all} \quad y \in \cC(x) \ \ \ \text{and} \ \ \  a',b' \in \{a-1,a,a+1\}.
		\end{equation}
Indeed, each vanishing condition in \eqref{eq:sapporo-1} is checking whether there exists a companion of~$x$
in a linear extension that is comparable to~$x$.

Recall that each vanishing condition in \eqref{eq:sapporo-1} is in~$\P$, see references in~$\S$\ref{ss:hist-Stanley}.
There are at most \ts $6n$ \ts instances to check, since for all \ts $y\in X$ \ts there are at most $6$ choices of distinct
\ts $a',b'$ \ts in \ts $\{a-1,a,a+1\}$.  Therefore, \ts {\sc EqualityStanley}$_{1} \in \P$.
\end{proof}

\smallskip

\subsection{Ma--Shenfeld theory}\label{ss:Sta1-critical}
We now  present several ingredients needed to prove Lemma~\ref{lem:k=1}.
We follow closely the Ma--Shenfeld paper~\cite{MS22}, presenting several
results from that paper.

\smallskip

In \cite{MS22}, Ma--Shenfeld defined the notions of \defn{subcritical},
\defn{critical}, and \defn{supercritical} \ts posets, which are directly
analogous to the corresponding notions for polytopes given in \cite{SvH-acta},
cf.~$\S$\ref{ss:hist-AF}.
As the precise definitions are rather technical,  we will not state
them here while still including key properties of those families that
are needed to prove Lemma~\ref{lem:k=1}.

We start with the following hierarchical relationship between the three families:
\[ \{\text{subcritical posets}\} \quad \supseteq \quad  \{\text{critical posets}\}  \quad \supseteq \quad \{\text{supercritical posets}\}.\]
A poset that is subcritical but not critical is called \defnb{sharp subcritical}, and a poset that is critical but not super critical is called \defnb{sharp critical}.

The equality conditions for \eqref{eq:Sta1} are directly determined by the classes to which the poset $P$ belongs, as we explain below.
We note that these families depend on the choices of \ts $P,x,a,z,c$, which we omit from the notation to improve readability.
Furthermore, without loss of generality we can assume that \. $z \notin \{a-1,a,a+1\}$,
as otherwise one of the numbers in \eqref{eq:Sta1} are equal to $0$, making
the problem in~$\P$ (see above).

\smallskip

We now state two other properties of these families, which  require the following definitions.
Following \cite{MS22}, we  add two
elements \. $z_0,z_{k+1}$ \. into the poset such that \. $z_0 \preccurlyeq y \preccurlyeq z_{k+1}$ \.
for all \ts $y \in X$, and we define \. $c_0:=0$ \. and \. $c_{k+1}:=n+1$\..
A \defnb{splitting pair} \ts is a pair of integers \ts $(r,s)$ \ts in \ts  $\{0,\ldots, k+1\}$,
such that \. $(r,s)\neq (0,k+1)$\..\footnote{In~\cite[Def~5.2]{MS22},
this pair is instead written as $(r+1,s)$.}

 \smallskip

\begin{lemma}[{\cite[Lemma~5.10]{MS22}}]\label{lem:subcrit-prop}
	Let \ts $P=(X,\prec)$ \ts be a  sharp subcritical poset.
	Then there exists a splitting pair \ts $(r,s)$ \ts such that
	\begin{equation}\label{eq:split}
		 \big|\big\{ \ts u \in X \, : \, z_r \ts \prec \ts u \ts \prec \ts  z_s \ts \big\}\big| \, = \,  c_s\. - \. c_r \. - \. 1.
	\end{equation}\end{lemma}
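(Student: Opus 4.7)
The plan is to follow the Ma--Shenfeld argument for their Lemma~5.10, adapted to the notation of this excerpt. Since the statement depends on the definitions of \emph{subcritical} and \emph{critical} posets introduced in \cite{MS22} (which the excerpt deliberately suppresses as too technical), the first thing I would do is import those definitions. Roughly, they encode respectively a weak and a strong form of decomposition compatibility among the three families $\Ec_{\bz\bc}(P,x,a+s)$ for $s \in \{-1,0,1\}$, and sharp subcriticality is the boundary case in which the weak form holds but the strong form fails.

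Next I would establish a universal upper bound valid for every splitting pair $(r,s)$, namely
\[
\bigl|\{u \in X : z_r \prec u \prec z_s\}\bigr| \, \le \, c_s \. - \. c_r \. - \. 1,
\]
which holds because every such $u$ must satisfy $c_r < f(u) < c_s$ in any $f \in \Ec_{\bz\bc}(P,x,a)$, and distinct elements receive distinct $f$-values. Thus \eqref{eq:split} is really the assertion that the open interval $(z_r, z_s)$ in the poset is packed as tightly as the integer interval $(c_r, c_s)$ permits.

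To locate the pair achieving equality, I would argue by contradiction: assume strict inequality holds for every splitting pair $(r,s)$. Then for each consecutive pair $(r,r+1)$ there is at least one ``floating'' slot in $(c_r, c_{r+1})$ that can be filled, in some $f \in \Ec_{\bz\bc}(P,x,a)$, by an element lying outside the order interval $(z_r, z_{r+1})$. Using promotion and demotion operations (together with the Daykin--Daykin vanishing conditions recalled in $\S$\ref{ss:hist-Stanley}), I would assemble these local freedoms into a global transposition structure on $\Ec_{\bz\bc}(P,x,a \pm 1)$ that upgrades the subcriticality hypothesis to full criticality, contradicting sharpness. The splitting pair is then read off from the first index where this upgrade breaks down.

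The main obstacle, and the technical heart of the argument, is executing this last step: showing that strict inequality at every pair really does force criticality. This requires a careful bookkeeping of how companions $\lc(f), \uc(f)$ propagate under promotion across the fixed marks $z_1,\ldots,z_k$, and it is precisely the content of the combinatorial machinery developed in \cite[$\S$5]{MS22}. Rather than reconstruct that machinery from scratch here --- which would essentially duplicate a substantial portion of their paper --- I would invoke their proof of Lemma~5.10 directly once the setup above is in place.
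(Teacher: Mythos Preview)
The paper does not prove this lemma at all: it is quoted verbatim from \cite[Lemma~5.10]{MS22} with that citation in the lemma heading, and no proof is supplied. Your proposal, which sketches a plausible outline but ultimately defers to \cite{MS22} for the technical core, is therefore consistent with the paper's treatment --- indeed, more than the paper itself offers, since the paper simply imports the result as a black box needed for Lemma~\ref{lem:k=1}.
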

\smallskip

We say that poset \ts $P$ \ts is \defnb{split indecomposable} \ts if, for every splitting pair $(r,s)$,
\[   		 \big|\big\{ \ts u \in X \, : \, z_r \ts \prec \ts u \ts \prec \ts  z_s \ts \big\}\big| \, \leq \,  c_s\. - \. c_r \. - \. 2. \]
In particular, by Lemma~\ref{lem:subcrit-prop}
every sharp subcritical poset is not split indecomposable.

It was shown in \cite{MS22}, that we can without loss of generality assume
that poset \ts $P$ \ts is split indecomposable.  Indeed, otherwise checking
\eqref{eq:Sta1} can be reduced to checking the same problem for a
smaller poset:  either restricting to the set in \eqref{eq:split},
or removing this set from the poset, see \cite[$\S$6]{MS22} for details.
Thus we can without loss of generality assume that \ts $P$ \ts is a critical poset.
\smallskip

\begin{lemma}
[{\cite[Lemma~5.11]{MS22}}]\label{lem:crit-prop}
Let \ts $P$ \ts be a split indecomposable sharp critical poset.
Then there exists a splitting pair \ts $(r,s)$ \ts such that \. $c_r<a<c_s$ \. and
\begin{equation}\label{eq:split-2}
		 \big|\big\{ \ts u \in X  \, : \, z_r \ts \prec \ts u \ts \prec \ts
z_s \ts \big\}\big| \, = \,  c_s\. - \. c_r \. - \. 2.
\end{equation}
\end{lemma}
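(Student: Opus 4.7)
The plan is to follow the iterative Cauchy--Schwarz degeneration framework of Ma and Shenfeld, mirroring the proof of Lemma~\ref{lem:subcrit-prop} but one level deeper in the criticality hierarchy. The guiding principle is that subcriticality records the first-order equality case of the mixed volume iteration (cardinality $c_s - c_r - 1$ in \eqref{eq:split}), while sharp criticality records the second-order equality case (cardinality $c_s - c_r - 2$ in \eqref{eq:split-2}), with the extra free slot corresponding precisely to $x$ at the position $a$ that drives the Stanley equality.

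First, I would unpack the definition of sharp critical (i.e.\ critical but not supercritical) to exhibit a Cauchy--Schwarz equality at the second level of the mixed volume iteration used to define these families. Following the blueprint of Lemma~\ref{lem:subcrit-prop}, this extraction produces some splitting pair $(r,s)$ together with a cardinality upper bound on $\{u \in X : z_r \prec u \prec z_s\}$. Since $P$ is assumed split indecomposable, the first-order constraint \eqref{eq:split} is strictly violated for every splitting pair, so the next admissible cardinality value $c_s - c_r - 2$ must be attained. This gives \eqref{eq:split-2}.

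Next, I would establish $c_r < a < c_s$ by locating the extra position in the open window $[c_r+1, c_s-1]$ which is freed up by passing from \eqref{eq:split} to \eqref{eq:split-2}. Criticality is by construction a statement about the three consecutive Stanley numbers $\aN_{z\ts c}(P,x,a-1)$, $\aN_{z\ts c}(P,x,a)$, and $\aN_{z\ts c}(P,x,a+1)$, so the element occupying that free position must be $x$ at position $a$; otherwise one could peel off a subposet and reduce to a sharp subcritical situation, contradicting split indecomposability.

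The main obstacle will be justifying that the free position is exactly $a$, rather than some other index in $(c_r, c_s)$. This demands a careful accounting of which companion relations generate the second-order degeneration in the combinatorial atlas, and is where one invokes the geometric equality characterization of~\cite{SvH-acta} transferred to the order polytope setting through the dictionary of~\cite{MS22}. Since the statement is cited verbatim as~\cite[Lemma~5.11]{MS22}, the full argument lives there; in the present paper it enters as a black box, used in the next section together with Lemma~\ref{lem:subcrit-prop} to reduce Lemma~\ref{lem:k=1} to the incomparability condition $(\divideontimes)$.
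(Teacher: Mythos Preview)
The paper does not prove this lemma at all: it is quoted verbatim from \cite[Lemma~5.11]{MS22} and used as a black box in Section~\ref{s:Sta1}, exactly as you note in your final paragraph. So there is nothing to compare against, and your recognition that ``in the present paper it enters as a black box'' is the correct assessment.

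That said, the first two paragraphs of your proposal are not a proof but a heuristic narrative. Phrases like ``the next admissible cardinality value $c_s - c_r - 2$ must be attained'' and ``the element occupying that free position must be $x$ at position $a$'' are plausible-sounding but unjustified: you have not said what the actual definitions of subcritical, critical, and supercritical are (the paper explicitly declines to state them), nor have you exhibited the Cauchy--Schwarz iteration, nor explained why split indecomposability forces equality at exactly $c_s - c_r - 2$ rather than something smaller. If you intend this as a genuine proof, it has real gaps; if you intend it only as orientation before deferring to \cite{MS22}, then it is fine, but you should make that intent clear from the start rather than only in the last paragraph.
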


\smallskip

\begin{rem}\label{r:MS-critical}
Lemmas~\ref{lem:subcrit-prop} and~\ref{lem:crit-prop} can be modified to imply
that deciding whether poset \ts $P$ \ts is subcritical, critical, or supercritical
is in~$\P$.  We do not need this result for the proof of Lemma~\ref{lem:k=1},
so we omit these changes to stay close to the presentation in~\cite{MS22}.
More generally, one can ask similar questions for H-polytopes 
(i.e., deciding if a given collection of polytopes is subcritical/critical/supercritical).  
While we believe that for TU-polytopes these decision problems are still likely
to be in \ts $\P$, proving that would already be an interesting challenge
beyond the scope of this paper.
\end{rem}

\smallskip

%
Recall from  \S\ref{s:Flat-Sta-proof}  that \. $\cF(a,\comp, \comp)$ \.
is the set of linear extensions in \. $\Ec_{z \ts c}(P,x,a)$, such that
both the lower and upper companions of $x$ are incomparable to~$x$.
Next,
 \. $\cF(a,\comp, \inc)$ \. is the set of linear extensions in \. $\Ec_{z \ts c}(P,x,a)$,
 such that  the lower companion is comparable to~$x$, but the upper companion is incomparable to~$x$.
Similarly,
\. $\cF(a,\inc, \comp)$ \. is the set of linear extensions in \. $\Ec_{z \ts c}(P,x,a)$,
such that  the lower companion is incomparable to~$x$, but the upper companion is comparable to~$x$.
Let \. $\cF(a-1,\cdot, \cdot)$ \. and \. $\cF(a+1,\cdot, \cdot)$ \. be defined analogously.
Finally, let \. $\aF(a+s,\cdot,\cdot) \. := \. |\cF(a+s,\cdot,\cdot)|$ \. where \. $s\in \{0,\pm 1\}$,
be the numbers of these linear extensions.

\smallskip

\begin{lemma}[{\cite[Thm~1.5]{MS22}}]\label{lem:crit-cond}
	Let \ts $P$ \ts be a  critical poset.
	 Then \ts \eqref{eq:Sta1} \ts holds \.
	\underline{if and only if}
	\begin{align}
& \aFr(a-1,\textnormal{com},\textnormal{com})\ = \ \aFr(a,\textnormal{com},\textnormal{com}) \ = \ \aFr(a+1,\textnormal{com},\textnormal{com}) \, = \, 0 \quad \ \text{ and} \label{eq:crit-1}\\
& \aligned & \aFr(a-1,\textnormal{com},\textnormal{inc}) \ = \ \aFr(a-1,\textnormal{inc},\textnormal{com})
\ = \ \aFr(a,\textnormal{com},\textnormal{inc})  \\
& \hskip1.cm = \ \aFr(a,\textnormal{inc},\textnormal{com})
\ = \ \aFr(a+1,\textnormal{com},\textnormal{inc}) \ = \ \aFr(a+1,\textnormal{inc},\textnormal{com}).
\endaligned \label{eq:crit-2}
	\end{align}
\end{lemma}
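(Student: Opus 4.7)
The plan is to establish both directions of the equivalence using bijections between linear extensions induced by adjacent transpositions, combined with the structural rigidity that criticality provides. Throughout I abbreviate $A_s := \aFr(a+s,\comp,\comp)$, $B_s := \aFr(a+s,\comp,\inc)$, $C_s := \aFr(a+s,\inc,\comp)$, $D_s := \aFr(a+s,\inc,\inc)$, so that $\aNr_{z\ts c}(P,x,a+s) = A_s + B_s + C_s + D_s$.

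\emph{Swap identities and the easy direction.} First I would set up two promotion/demotion-style bijections. For $f \in \cF(a, \inc, \comp) \sqcup \cF(a, \inc, \inc)$, the element $\lc(f)$ at position $a-1$ is incomparable to $x$, so swapping $x$ with $\lc(f)$ produces $\tau(f) \in \Ec_{z\ts c}(P,x,a-1)$ whose upper companion is the old $\lc(f)$, hence incomparable to $x$. The inverse swap shows this is a bijection
$$\cF(a, \inc, \inc) \sqcup \cF(a, \inc, \comp) \ \longleftrightarrow\ \cF(a-1, \inc, \inc) \sqcup \cF(a-1, \comp, \inc),$$
yielding $D_0 + C_0 = D_{-1} + B_{-1}$. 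The symmetric swap at positions $a, a+1$ gives $D_0 + B_0 = D_1 + C_1$. These identities hold for arbitrary posets, with no use of criticality. The reverse direction of the lemma now follows immediately: if \eqref{eq:crit-1}--\eqref{eq:crit-2} hold, then $A_s = 0$ and $B_s = C_s = M$ for a common $M$, so the sum identities force $D_{-1} = D_0 = D_1$; hence $\aNr_{z\ts c}(P,x,a+s) = 2M + D_0$ is constant in $s \in \{-1,0,1\}$ and \eqref{eq:Sta1} holds trivially.

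\emph{Forward direction via criticality.} Assume \eqref{eq:Sta1} and that $P$ is critical. By Lemma~\ref{lem:crit-prop}, pick a splitting pair $(r,s)$ with $c_r < a < c_s$ and $|U| = c_s - c_r - 2$, where $U := \{u \in X : z_r \prec u \prec z_s\}$. In any $f \in \Ec_{z\ts c}(P, x, a+t)$ with $t \in \{-1,0,1\}$, the $c_s - c_r - 1$ positions strictly between those of $z_r$ and $z_s$ must be filled by $U \sqcup \{w(f)\}$ for a unique element $w(f) \notin U \cup \{z_r, z_s\}$. I would split linear extensions according to the identity and window-position of $w$, reducing each $\aFr(a+t, \cdot, \cdot)$ to a sum over choices of $w$ of local counts in the window weighted by global counts of linear extensions on the complementary subposet. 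The plan is to show that on each individual summand, the swap bijections of Step~1 refine to individual equalities, and that the $(\comp,\comp)$ contributions are forced to vanish.

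\emph{Main obstacle.} The hard step is upgrading the two scalar sum identities into the nine equalities required by \eqref{eq:crit-1}--\eqref{eq:crit-2}. The sum identities leave three degrees of freedom per $s$, and Stanley equality provides only a single quadratic relation among the $\aNr_{z\ts c}(P,x,a+s)$; so criticality must be leveraged at the finer level of window-contributions. Concretely, I expect $A_s = 0$ to come from the observation that a linear extension contributing to $\cF(a+t,\comp,\comp)$ would place two elements comparable to $x$ into the window adjacent to $x$, and the tight count $|U| = c_s - c_r - 2$ leaves no room for such a configuration once the chain constraints imposed by $z_r, z_s$ are enforced. The equalities $B_s = C_s$ and their cross-$s$ coincidences should then follow by pairing window-configurations via the $x \leftrightarrow w$ swap and combining with the Stanley-equality constraint: a Cauchy--Schwarz-style argument shows that equality in $\aNr_0^2 \geq \aNr_{-1}\aNr_1$ for counts of the special form $2M_s + D_s$ forces $M_s$ to be independent of $s$. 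This case analysis, essentially a poset-theoretic translation of the Alexandrov--Fenchel equality extraction of \cite{SvH-acta}, is where the heavy lifting lies, and I expect it to proceed via a careful bookkeeping of how the unique free element $w$'s comparabilities to $x$ distribute across positions $\{a-1, a, a+1\}$.
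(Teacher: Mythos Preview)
The paper does not prove this lemma at all: it is quoted verbatim as \cite[Thm~1.5]{MS22} and used as a black box. So there is no ``paper's proof'' to compare against beyond the citation, and the actual argument in \cite{MS22} is geometric --- it passes through the Alexandrov--Fenchel equality characterization of \cite{SvH-acta} rather than working purely combinatorially.

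Your reverse direction is correct and clean: the swap bijections \ $\cF(a,\inc,\ast)\leftrightarrow\cF(a-1,\ast,\inc)$ \ and \ $\cF(a,\ast,\inc)\leftrightarrow\cF(a+1,\inc,\ast)$ \ are valid for any poset, and together with \eqref{eq:crit-1}--\eqref{eq:crit-2} they force all three $\aNr$-values to coincide.

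Your forward direction, however, has a genuine gap. You invoke Lemma~\ref{lem:crit-prop} to obtain a splitting pair, but that lemma applies only to \emph{split indecomposable sharp critical} posets, whereas the present statement is for arbitrary critical posets (which includes supercritical ones, and need not be split indecomposable). More substantively, the rest of the argument is a plan, not a proof: you write ``I expect $A_s=0$ to come from\ldots'' and ``should then follow'', but never actually carry out the case analysis. In particular, the claim that the tight count $|U|=c_s-c_r-2$ forbids two $x$-comparable companions is not established, and the reduction of Stanley equality plus two linear swap identities to the nine required equalities is asserted rather than shown --- a single quadratic relation and two linear ones among twelve unknowns is far from enough without the detailed window-by-window bookkeeping you allude to. The proof in \cite{MS22} is long precisely because this step is where the geometry of \cite{SvH-acta} is translated into poset language, and your sketch does not supply a substitute for that.
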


\smallskip

Now note that \. $\aF(a-1,\comp,\inc) \ \leq \ \aF(a-1,\inc,\comp)$,
with the equality \. \underline{if and only if} \. every upper companion of~$x$
is always incomparable to the lower companion of~$x$.  By an analogous arguments
applied to \. $\aF(a,\cdot, \cdot)$ \. and \. $\aF(a+1,\cdot, \cdot)$, we get
the following corollary.

\smallskip

\begin{cor}\label{cor:crit-cond}
	Let \ts $P$ \ts be a  critical poset.  Suppose
	\[ \aNr_{z \ts c}(P, x,a)^2 \, = \, \aNr_{z \ts c}(P, x,a+1) \.\cdot \.  \aNr_{z \ts c}(P, x,a-1) \ \neq  \ 0, \]
	Then, for every linear extension \ts $f\in \Ec(P)$ \ts counted by \eqref{eq:crit-2},
	the upper companion is incomparable to the lower companion: \. $\uc(f) \. \| \, {} \lc(f)$.
\end{cor}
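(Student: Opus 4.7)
The plan is to exploit the six-way equality in \eqref{eq:crit-2} (provided by Lemma~\ref{lem:crit-cond}) through the swap involution $\sigma_i$ which exchanges the values $i$ and $i+1$ in a linear extension of $P$. Recall that $\sigma_i(f)$ is a valid linear extension if and only if $f^{-1}(i) \| f^{-1}(i+1)$, and it is then its own inverse.

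First, I would observe that $\lc(f) \| \uc(f)$ holds automatically for $f \in \cF(a-1,\comp,\inc)$ and for $f \in \cF(a+1,\inc,\comp)$ by a short transitivity argument. For instance, if $f \in \cF(a-1,\comp,\inc)$, then $\lc(f) \in \cC(x)$ together with $f(\lc(f)) > f(x)$ forces $x \prec \lc(f)$; so $\lc(f) \prec \uc(f)$ (the only possible comparability consistent with $f(\lc(f)) < f(\uc(f))$) would yield $x \prec \uc(f)$ by transitivity, contradicting $\uc(f) \| x$. The case of $\cF(a+1,\inc,\comp)$ is entirely symmetric.

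Next, the restriction of $\sigma_a$ to $\cF(a-1,\comp,\inc)$ is well defined by the incomparability just established, and a direct check shows its image lies in $\cF(a-1,\inc,\comp)$ with the companion roles interchanged. This gives an injection $\cF(a-1,\comp,\inc) \hookrightarrow \cF(a-1,\inc,\comp)$. The equality $\aF(a-1,\comp,\inc) = \aF(a-1,\inc,\comp)$ from \eqref{eq:crit-2} forces this injection to be a bijection, which is possible only if every $g \in \cF(a-1,\inc,\comp)$ already satisfies $\lc(g) \| \uc(g)$: otherwise $\sigma_a(g)$ would be undefined and $g$ could not lie in the image. The parallel argument using $\sigma_{a-1}$ and $\aF(a+1,\inc,\comp) = \aF(a+1,\comp,\inc)$ yields the same conclusion for every $g \in \cF(a+1,\comp,\inc)$.

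Finally, for the remaining two sets $\cF(a,\comp,\inc)$ and $\cF(a,\inc,\comp)$, I would apply $\sigma_a$ and $\sigma_{a-1}$ respectively. In each case the swap is automatically legal---it exchanges $x$ with a companion that is incomparable to $x$---and because the companion positions $\{a-1,a,a+1\}$ are fixed by definition (independently of $f(x)$), the resulting maps are \emph{companion-preserving} bijections $\cF(a,\comp,\inc) \leftrightarrow \cF(a+1,\comp,\inc)$ and $\cF(a,\inc,\comp) \leftrightarrow \cF(a-1,\inc,\comp)$. Thus the incomparability $\lc \| \uc$ already established on the right-hand sides transfers directly to the left. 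The main effort throughout is bookkeeping: tracking which positions each swap touches, classifying the image correctly, and noting in each of the six cases whether the lower/upper labels are preserved or exchanged---rather than any substantive obstacle.
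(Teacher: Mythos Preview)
Your proof is correct and follows essentially the same approach as the paper: the paper's argument (the paragraph immediately preceding Corollary~\ref{cor:crit-cond}) is a terse one-sentence sketch of exactly the swap-involution mechanism you spell out, noting that $\aF(a-1,\comp,\inc)\le \aF(a-1,\inc,\comp)$ with equality iff $\lc\parallel\uc$ throughout, and then deferring the $a$ and $a+1$ cases to ``analogous arguments''. Your write-up supplies those analogous arguments explicitly, including the observation that the $\cF(a,\cdot,\cdot)$ cases are most cleanly handled by the companion-preserving bijections $\sigma_a:\cF(a,\comp,\inc)\to\cF(a+1,\comp,\inc)$ and $\sigma_{a-1}:\cF(a,\inc,\comp)\to\cF(a-1,\inc,\comp)$ rather than by an internal inequality between $\aF(a,\comp,\inc)$ and $\aF(a,\inc,\comp)$.
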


\smallskip

Finally, we have equality conditions for supercritical posets.

\smallskip

\begin{lemma}
	[{\cite[Thm~1.3]{MS22}}]\label{lem:supercrit-cond}
	Let $P$ be a supercritical poset.
	Then \ts \eqref{eq:Sta1} \ts holds \.
	\underline{if and only if} \. equalities \ts \eqref{eq:crit-1} \ts and \ts \eqref{eq:crit-2} \ts hold, and additionally
	\begin{equation}\label{eq:crit-3}
\text{all numbers in \eqref{eq:crit-2} are equal to~$\ts 0$.}
	\end{equation}
\end{lemma}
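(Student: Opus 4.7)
The proof strategy rests on the observation that supercritical implies critical, so Lemma~\ref{lem:crit-cond} already provides the equivalence \eqref{eq:Sta1} $\Leftrightarrow$ (\eqref{eq:crit-1} and \eqref{eq:crit-2}) for any supercritical poset. Consequently, the only new content of Lemma~\ref{lem:supercrit-cond} compared to Lemma~\ref{lem:crit-cond} is the claim that in the supercritical setting, the conjunction of \eqref{eq:crit-1} and \eqref{eq:crit-2} automatically forces the common value in \eqref{eq:crit-2} to equal zero. Thus my plan is to prove the implication
\[ P \text{ supercritical}, \ \eqref{eq:crit-1}, \eqref{eq:crit-2} \text{ hold} \quad \Longrightarrow \quad \eqref{eq:crit-3} \text{ holds}, \]
and then stitch this together with Lemma~\ref{lem:crit-cond} for the ``if and only if'' statement.

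To prove this implication, I argue by contradiction. Suppose the common value $N$ in \eqref{eq:crit-2} satisfies $N>0$. Then there exists some linear extension $f \in \cF(a,\comp,\inc)$; concretely, $f(x)=a$, $f(z)=c$, the lower companion $\lc(f)$ is strictly below $x$, and the upper companion $\uc(f)$ is incomparable to $x$. The idea is to exploit the extra structural slack provided by supercriticality (beyond the splitting-pair equality \eqref{eq:split-2} guaranteed by criticality) to construct a \emph{strict} injection from $\cF(a,\comp,\inc)$ into one of the other companion-classes appearing in \eqref{eq:crit-2}. Such a strict injection would contradict the chain of equalities in \eqref{eq:crit-2}, forcing $N=0$.

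The construction proceeds via a demotion/promotion argument on linear extensions (cf.\ \cite{Sta-promo}, \cite[$\S$3.20]{Sta-EC}), which is the standard technical tool in the Ma--Shenfeld framework. Given $f \in \cF(a,\comp,\inc)$, I identify an element $u$ whose existence is guaranteed by supercriticality (the refinement of the splitting-pair count \eqref{eq:split-2} that distinguishes supercritical from merely sharp critical posets provides a ``fresh'' element comparable or incomparable to $x$ in a prescribed way), and use it as a pivot to permute $f$ into a new linear extension $f' \in \cF(a+1,\comp,\inc)$ not already in the image. Running this procedure together with its inverse shows $\aF(a,\comp,\inc) < \aF(a+1,\comp,\inc)$, in direct conflict with \eqref{eq:crit-2}. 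Hence $N=0$, establishing \eqref{eq:crit-3}.

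The main obstacle is making the promotion/demotion injection argument precise: it requires unpacking the definition of supercriticality (only black-boxed in the excerpt) to extract the additional structural element that distinguishes it from criticality, and verifying that the resulting map on linear extensions is well-defined and has image strictly disjoint from the domain up to the bijective core. The vanishing conditions \eqref{eq:crit-1} are needed to rule out pathological cases in the injection (specifically, that the constructed $f'$ does not accidentally land in one of the $\cF(\cdot,\comp,\comp)$ classes). Once these technical details are verified, the ``only if'' direction follows, and the ``if'' direction is immediate: \eqref{eq:crit-1} together with \eqref{eq:crit-3} (which subsumes \eqref{eq:crit-2} once both sides vanish) makes all terms in the expansion of $\aN_{z\ts c}(P,x,a+s)$ either zero or symmetric, yielding \eqref{eq:Sta1} by direct computation as in the proof of Lemma~\ref{l:Flat-Sta}.
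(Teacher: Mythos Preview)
The paper does not prove this lemma: it is quoted verbatim as \cite[Thm~1.3]{MS22} and used as a black box, so there is no ``paper's own proof'' to compare against. Your task description may have been misleading on this point.

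That said, your proposal is not a proof but a proof plan, and the plan has a genuine gap at its center. You correctly reduce the statement, via Lemma~\ref{lem:crit-cond}, to showing that supercriticality together with \eqref{eq:crit-1} and \eqref{eq:crit-2} forces the common value in \eqref{eq:crit-2} to vanish. But your proposed mechanism --- a strict injection from $\cF(a,\comp,\inc)$ into $\cF(a+1,\comp,\inc)$ built by promotion/demotion around an element ``whose existence is guaranteed by supercriticality'' --- is entirely schematic. You explicitly acknowledge that you do not have the definition of supercriticality, so you cannot name the element $u$, cannot specify the map, and cannot verify injectivity or strictness. The sentence ``Running this procedure together with its inverse shows $\aF(a,\comp,\inc) < \aF(a+1,\comp,\inc)$'' asserts exactly the conclusion you need without any supporting construction. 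This is the entire content of the lemma beyond Lemma~\ref{lem:crit-cond}, and it is left as a promissory note.

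For context: the actual argument in \cite{MS22} does use promotion/demotion on linear extensions, so your instinct about the toolkit is right. But the substance lies in the precise combinatorial characterization of supercriticality (a strengthening of the splitting-pair count beyond \eqref{eq:split-2}) and in a careful case analysis tracking how companions move under the maps --- none of which your proposal supplies. Without the definition in hand, what you have written cannot be completed into a proof.
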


\smallskip

\subsection{Proof of Lemma~\ref{lem:k=1}}\label{ss:Sta1-proofi}
%
Note that \ts \eqref{eq:crit-1}, \ts \eqref{eq:crit-2} \ts and \ts \eqref{eq:crit-3}
are equivalent to requiring that \ts $x$ \ts is incomparable to both \. $\lc(f)$  \. and \.  $\uc(f)$.
Thus it suffices to show that, if \ts $P$ \ts is a critical poset, then \eqref{eq:crit-3} holds.

Suppose to the contrary, that \ts $P=(X,\prec)$ \ts is a counterexample,
and let \ts $n:=|X|$.  Then \ts $P$ \ts is a sharp critical poset.
By taking the dual poset if necessary, we can assume, without loss of generality,
that \ts $c<a$.  It then follows that the splitting pair \ts $(r,s)$ \ts in
Lemma~\ref{lem:crit-prop} is \ts $(1,2)$.
This means that \ts $c_r=c$ \ts and \ts $c_s=n+1$, so we have from \eqref{eq:split-2} that
	\begin{equation}\label{eq:split-3}
			\left|\{ u \in P \, :  \, z \. \prec \. u  \}\right| \ = \  n -c-1.
	\end{equation}
	
	Since \eqref{eq:crit-3} does not hold, there exist \. $f \in  \cF(a,\comp, \inc)$ \. and \. $h \in \cF(a-1,\comp,\inc)$\..
	Let \. $y_1:=f^{-1}(a-1)$ \. (i.e., the lower companion  in $f$)
	and \. $y_2:=h^{-1}(a)$ \. (i.e., the lower companion  in $h$).
	Note that we have \. $y_1 \. \prec \. x \. \prec \. y_2$\..
	Let \. $m=f(y_2)$\., and note that \. $m\geq a+2$ \. by definition.

\smallskip

\nin
{\bf We claim:} \. There exists a new linear extension \. $g\in \Ec(P)$ \. such that $g(y_2)=m-1$, and
	such that \. $g \in \cF(a,\comp,\inc)$ \. if \ts $m>a+2$,
	and \. $g \in \cF(a,\comp,\comp)$  \. if \ts $m=a+2$.
	Note that this suffices to prove the lemma, as by replacing $f$ with $g$ and decreasing $m$ repeatedly,
	we get that
\. $\aF(a,\comp,\comp)>0$, which contradicts \eqref{eq:crit-1}.

\smallskip
	
\nin
\textbf{\em We now prove the claim.}
	Since $h(y_2)=a <m=f(y_2)$, there exists \. $w \in X$ \. such that  \. $f(w) < m$ \.
	and \. $w \. \| \. y_2$\ts.
	Suppose \ts $w$ \ts is such an element that maximizes \ts $f(w)$.
	There are  \emph{three cases}:
	
\smallskip

	\underline{First}\ts, \ts suppose that \. $f(w)> a$.  By the maximality assumption, every element ordered
    between $w$ and $y_2$ according to~$f$, is incomparable to~$w$.
	Then we can promote \ts $w$ \ts to be larger than~$\ts y_2$.
	Note that the resulting linear extension \ts $g\in \Ec(P)$ \ts satisfies \. $g(y_2)=m-1$,
    \. $g(y_1)=a-1$ \. and  \. $g(x)=a$, as desired.

\smallskip
	
	\underline{Second}\ts, \ts suppose that \. $c<f(w)<a$.
	By the maximality assumption, every element ordered between \ts $w$ \ts and \ts $y_2$
    according to~$f$, is incomparable to~$\ts w$.
	Then we can promote \ts $w$ \ts to be larger than~$\ts y_2$.
	The resulting linear extension \ts $g'\in \Ec(P)$ \ts satisfies \. $g'(y_2)=m-1$.
	Note, however, that we have \. $g'(y_1)=a-2$ \.  and \. $g'(x)=a-1$.
	In order to fix this, let \. $v:=f^{-1}(a+1)$.
	It follows from Corollary~\ref{cor:crit-cond}, that $v$ is incomparable to~$y_1$ and~$x$.
    Thus we can demote $v$ to be the smaller than~$y_1$.  We obtain a new linear extension
    \ts $g\in \Ec(P)$ \ts
    that satisfies \. $g(y_1)=a-1$ \. and  \. $g(x)=a$, as desired.

\smallskip
	
	\underline{Third}\ts, \ts suppose that \. $f(w)<c$.
	Then, every element ordered between $z$ and $y_2$ according to~$\ts f$,
is less than~$\ts y_2 \ts.$  Note that there are   \. $m-c-1$ \. many such elements.
	On the other hand, it follows from \eqref{eq:split-3}, that there is exactly  one element in
\. $\{f^{-1}(c+1), f^{-1}(c+2), \ldots, f^{-1}(n) \}$ \.
	that is incomparable to~$\ts z$.
	It then follows that there are at least \. $m-c-2$ \.
	elements that are greater than \ts $z$ \ts and less than~$\ts y_2$\ts, i.e.\
		\begin{equation}
		\big|\big\{ u \in X \, : \, z \. \prec \. u \. \prec \.  y_2 \big\}\big| \, \geq  \,  m-c-2.
\end{equation}
On the other hand, the existence of \ts $h$ \ts implies that
		\begin{equation}
	\big|\big\{ u \in X \, : \, z \. \prec \. u \. \prec \.  y_2 \big\}\big| \,
\leq   \,  h(y_2)-c-1 \, = \, a-c-1 \, \leq \, m-c-3,
\end{equation}
a contradiction. This finishes the proof of the claim. \qed



\medskip

\section{Final remarks}\label{s:finrem}

{\small

\subsection{The basis of our work}\label{ss:finrem-hist}
Due to the multidisciplinary nature of this paper, we make a special effort to
simplify the presentation.  Namely, the proofs of our main results
(Theorems~\ref{t:main-AF} and~\ref{t:main-Sta}), are largely
self-contained in a sense that we only use standard results
in combinatorics (Stanley's theorem in~$\S$\ref{ss:AF-proofs} and the
Brightwell--Winkler's theorem in~$\S$\ref{ss:hist-CC}),
computational complexity (Toda's theorem in~$\S$\ref{ss:roadmap-proofs}),
and number theory (Yao--Knuth's theorem in~$\S$\ref{ss:verify-NT}).  In reality,
the paper freely uses tools and ideas from several recent
results worth acknowledging.

First, we heavily build on the recent paper by Shenfeld and van Handel
\cite{SvH-acta}, and the followup by Ma and Shenfeld~\cite{MS22}.
Without these results we would not know where to look for ``bad posets''
and ``bad polytopes''.  Additionally, the proof in~$\S$\ref{s:Flat-Sta-proof}
is a reworking and simplification of many technical results and ideas
in~\cite{MS22}.

Second, in~$\S$\ref{ss:verify-CF} we use and largely rework
the continued fraction approach
by Kravitz and Sah \cite{KS21}.  There, the authors employ the
\defng{Stern--Brocot} \ts and \defng{Calkin--Wilf tree} \ts notions, which
we avoid in our presentation as we aim for different applications.

Third, in the heart of our proof of Theorem~\ref{t:main-Sta}
in~$\S$\ref{ss:roadmap-proofs}, we follow the complexity roadmap
championed by Ikenmeyer, Panova and the second author in \cite{IP22,IPP22}.
Same for the heart of the proof of the Verification Lemma~\ref{l:Verify-Quart}
in~$\S$\ref{ss:verify-proof},
which follows the approach in our companion  paper~\cite{CP23}.

\smallskip

On the other hand, the proof of Theorem~\ref{t:ESta-1} given in Section~\ref{s:Sta1},
is the opposite of self-contained, as we rely heavily on both results and ideas
in~\cite{MS22}.  We also use properties the \defng{promotion} \ts and \defng{demotion} \ts
operations on linear extensions, that were introduced by Sch\"utzenberger in the
context of algebraic combinatorics, see \cite{Schu}\footnote{These operations
were rediscovered in \cite{DD,Day84}, where they are called \defng{push up} \ts
and \defng{push down}, respectively.}.   Panova and the authors employed
this approach in a closely related setting in \cite{CPP-KS,CPP-effective,CPP-Quant-CPC}.
We emphasize once again that our proof of Theorem~\ref{t:ESta-1} is independent
of the rest of the paper and is the only part that uses results in~\cite{MS22}.

\subsection{Equality cases}\label{ss:finrem-equality}
The reader unfamiliar with the subject may wonder whether equality
conditions of known inequalities are worth an extensive investigation.
Here is how Gardner addresses this question:

\smallskip

\begin{center}\begin{minipage}{13.cm}%
{{\em ``If inequalities are silver currency in mathematics, those that come along
with precise equality conditions are gold. Equality conditions are treasure boxes
containing valuable information.''} \cite[p.~360]{Gar02}.
}
\end{minipage}\end{center}

\smallskip

\nin
Closer to the subject of this paper, Shenfeld and van Handel explain the difficulty
of finding equality conditions for \eqref{eq:MQI} and~\eqref{eq:AF}:

\smallskip

\begin{center}\begin{minipage}{13.cm}%
{{\em ``In first instance, it may be expected that the characterization of the extremals of the
Minkowski and Alexandrov--Fenchel inequalities should follow from a careful analysis of
the proofs of these inequalities. It turns out, however, that none of the classical
proofs provides information on the cases of equality: the proofs rely on strong
regularity assumptions (such as smooth bodies or polytopes with restricted face directions)
under which only trivial equality cases arise, and deduce the general result by
approximation. The study of the nontrivial extremals requires one to work directly
with general convex bodies, whose analysis gives rise to basic open questions in the
foundation of convex geometry.''} \cite[p.~962]{SvH-duke}.
}
\end{minipage}\end{center}

\smallskip

\subsection{Polytopes}\label{ss:finrem-polytopes}
The family of TU-polytopes that we chose is very special in that these H-polytopes
have integral vertices (but not a description in~$\P$, as V-polytopes are defined
to have).  In \cite{CP23+}, we consider a family of \defng{axis-parallel boxes} \ts
which have similar properties.
Clearly, for general convex bodies there is no natural way to set up a
computational problem that would not be immediately intractable
(unless one moves to a more powerful computational model, see e.g.~\cite{BCSS98}).

\subsection{Discrete isoperimetric inequality} \label{ss:finrem-discrete-isop}
For a discrete version of the isoperimetric inequality in the plane, one can consider
convex polygons with given normals to edges.  In this case, L'Huilier (1775)
proved that the isoperimetric ratio is minimized for circumscribed polygons,
see e.g.\ \cite[$\S$I.4]{Fej}.
%
In the~1860s, Steiner and Lindel\"of studied a natural generalization of
this problem in~$\rr^3$, but were unable to solve it in full generality.

At the turn of 20th century, Minkowski developed the
\defng{theory of mixed volumes}, motivated in part to
resolve the Steiner--Lindel\"of problem.  He showed that among all polytopes with
given normals, the isoperimetric ratio is minimized on circumscribed polytopes,
see e.g.\ \cite[$\S$V.7]{Fej}.

There are several Bonnesen type and stability versions of the
discrete isoperimetric inequality, see e.g.\ \cite{FRS,IN15,Zhang98}.
Let us single out a hexagon version used by Hales in his famous proof
of the \defng{honeycomb conjecture} \cite[Thm~4]{Hales}.
%


\subsection{Brunn--Minkowski inequality} \label{ss:finrem-BM}
There are several proofs of the Brunn--Minkowski inequality~\eqref{eq:BM},
but some of them do not imply the equality conditions, such as, e.g., the ``brick-by-brick''
inductive argument in \cite[$\S$12.2]{Mat}.  Note also that Alexandrov's proof of the
\defng{Minkowski uniqueness theorem} \ts (of polytopes with given facet volumes and normals)
relies on the equality conditions for the Brunn--Minkowski inequality, see \cite{Ale-book}.
This is essential for Alexandrov's ``topological method'', and is the basis for the
\defng{variational principle} \ts approach, see e.g.\ \cite{Pak-book}.

\subsection{Van der Waerden conjecture} \label{ss:finrem-vdW}
The Alexandrov--Fenchel inequality~\eqref{eq:AF} came to prominence
in combinatorics after Egorychev~\cite{Egor} used it to prove the
\defn{van der Waerden conjecture}, that was proved earlier
by Falikman~\cite{Fal}.\footnote{According to
Vladimir Gurvich's \href{https://tinyurl.com/47c6s9et}{essay},
Egorychev was the referee of Falikman's article which
was submitted prior to Egorychev's preprint.
}
See~\cite{Knuth81,vL} for friendly expositions.
This development set the stage for Stanley's paper~\cite{Sta-AF}.
The conjecture states that for every bistochastic $n\times n$
matrix~$A$, we have
\begin{equation}\label{eq:vdW}\tag{vdW}
\per(A) \, \ge \, \frac{n!}{n^n}\.,
\end{equation}
and the equality holds only if \ts $A=(a_{ij})$  \ts has uniform entries: \ts $a_{ij}=\frac{1}{n}$ \ts for all $1\le i,j\le n$.

Note that Egorychev's proof of the equality conditions for~\eqref{eq:vdW}
used Alexandrov's equality conditions~\eqref{eq:AF} for
nondegenerate boxes, see~$\S$\ref{ss:hist-AF} (cf.~\cite[p.~735]{Knuth81} and \cite[$\S$7]{vL}).
In a followup paper~\cite{CP23+}, we analyze the complexity of the
Alexandrov--Fenchel equality condition for degenerate boxes.
Note also that Knuth's exposition in \cite{Knuth81} is essentially
self-contained, while Gurvits's proof of~\eqref{eq:vdW}
completely avoids~\eqref{eq:AF}, see~\cite{Gur,LS10}.

\subsection{Matroid inequalities}\label{ss:finrem-matroid}
Of the several log-concavity applications of the AF inequality given
by Stanley in~\cite{Sta-AF} (see also~\cite[$\S$6]{Sta-two}), one stands out
as a special case of a Mason's conjecture (Thm~2.9 in~\cite{Sta-AF}).
The strongest of the three Mason's conjectures states that the numbers
\. $\rI(M,k)/\binom{n}{k}$ \. are log-concave,
where \ts $\rI(M,k)$ \ts is the number of independent sets of size~$k$
in a matroid~$M$ on~$n$ elements.
These Mason's conjectures were recently proved in a long series
of spectacular papers culminating with \cite{AHK,ALOV,BH20},
see also an overview in~\cite{Huh,Kalai}.

Curiously, the equality cases for these inequalities are
rather trivial and can be verified in polynomial time \cite{MNY}
(see also~\cite[$\S$1.6]{CP}).  Here we assume that the matroid is
given in a concise presentation (such presentations include graphical,
bicircular and representable matroids).  Curiously, for the weighted extension
of Mason's third conjecture given in \cite[Thm~1.6]{CP}, the equality cases
are more involved. It follows from \cite[Thm~1.9]{CP}, however, that this
problem is in~$\coNP$.  In other words, Theorem~\ref{t:main-Sta}
shows that \ts $\ESta_2$ \ts is likely much more powerful.

Note that the defect \. $\psi(M,k):=\rI(M,k)^2-\rI(M,k+1)\cdot\rI(M,k-1)$ \.
is conjectured to be not in \ts $\SP$, see \cite[Conj.~5.3]{Pak-OPAC}.
Clearly, the argument in the proof of Corollary~\ref{c:main-Stanley-not-SP}
does not apply in this case.  Thus, another approach is needed to prove this conjecture,
just as another approach is need to prove that \ts $\phi_0 \notin \SP$ \ts (see~$\S$\ref{ss:intro-Stanley}).

\subsection{Complexity of equality cases}\label{ss:finrem-LE}
Recall that
Theorem~\ref{t:main-AF} does not imply that \ts $\EAF$ \ts
is \ts $\NP$-hard or \ts $\coNP$-hard, more traditional
measures of computational hardness. This remains out of reach.
Note, however, that \ts $\ESta_k$ \ts is naturally in the class
\ts $\CEP$, see~$\S$\ref{ss:def-CS}.


\begin{conj}\label{conj:Sta-CEP}
\. $\ESta_k$ \ts is \ts $\CEP$-complete for large enough~$k$.
\end{conj}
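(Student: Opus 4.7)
The plan is to establish the conjecture in two parts: membership $\ESta_k \in \CEP$, which is straightforward, and $\CEP$-hardness for large enough $k$, which is the main content. Membership is immediate for every $k \geq 0$: the equality $\aN_{\bz\bc}(P, x, a)^2 =^? \aN_{\bz\bc}(P, x, a+1) \cdot \aN_{\bz\bc}(P, x, a-1)$ has both sides given by products of $\SP$ functions (since $\aN_{\bz\bc}(P, x, a) \in \SP$, see~$\S$\ref{ss:hist-CC}), so each side again lies in $\SP$.

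For $\CEP$-hardness the natural target is to reduce a canonical $\CEP$-complete problem such as $\text{\sc C}_{\text{\sc \#LE}} = \{e(P_1) =^? e(P_2)\}$ to $\ESta_k$. The already-established reduction chain $\text{\sc QuadRLE} \to \text{\sc CRLE} \to \text{\sc FlatLE}_0 \to \ESta_2$ from Lemmas~\ref{l:Quad-CRLE}, \ref{lem:CRLE-Flat} and~\ref{l:Flat-Sta}, combined with Lemma~\ref{l:Sta-Sta-more}, shows that $\CEP$-hardness of $\text{\sc QuadRLE}$ would imply $\CEP$-hardness of $\ESta_k$ for every $k \geq 2$. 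The starting point is the Verification Lemma~\ref{l:Verify-Quart}, which already shows that any rational $A/B \leq n!$ can be verified as $\rho(P, x)$ with a single $\NP$ query to a $\text{\sc QuadRLE}$ oracle. Coupled with the canonical $\NP$ reduction from $\text{\sc C}_{\text{\sc \#LE}}$ (guess the common value $N$ and verify both $\rho$-equalities), this gives $\text{\sc C}_{\text{\sc \#LE}} \in \NP^{\langle \text{\sc QuadRLE} \rangle}$.

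The main obstacle, and the reason the statement is only a conjecture, is removing the outer $\NP$ quantifier from this reduction. In the proof of the Verification Lemma, the guessed witness integer $m$ comes from the probabilistic number-theoretic bound in Proposition~\ref{p:NTD}, itself deduced from the Yao--Knuth average bound of Lemma~\ref{lem:Knuth} via Markov's inequality. No deterministic polynomial-time construction of such $m$ from the input $(A, B)$ is currently known, and this appears to be a genuine open problem in effective Diophantine approximation. A direct resolution of this step would already yield the conjecture for $k = 2$.

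An alternative route, better matched to the ``large enough $k$'' formulation, would be to sidestep the continued-fraction machinery entirely and exploit the much richer expressivity of $\aN_{\bz\bc}(P, x, a)$ once $k$ is allowed to grow. As $k$ increases, the fixed positions $(\bz, \bc)$ act as programmable templates that can encode essentially arbitrary $\SP$-complete counting problems. One would then try to engineer gadget posets so that an arbitrary $\SP$-coincidence $f(u) = g(v)$ is equivalent to the specific quadratic equality $N^2 = N' \cdot N''$ defining $\ESta_k$. We expect the hardest step of this approach to be purely combinatorial: forcing the algebraic shape of the Stanley equality out of an arbitrary $\CEP$-instance, rather than using the abundant but unshaped $\SP$-values to certify inequality.
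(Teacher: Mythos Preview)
The statement you are writing about is a \emph{conjecture} in the paper, not a theorem: the paper gives no proof of it whatsoever. The surrounding text in~$\S$\ref{ss:finrem-LE} simply notes that $\ESta_k \in \CEP$ (which you correctly establish), motivates the conjecture by recalling that $\text{\sc C}_{\text{\#3SAT}}$ is $\CEP$-complete and $\coNP$-hard, and moves on. There is therefore no ``paper's own proof'' to compare your proposal against.

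Your write-up is not a proof either, and to your credit you are explicit about this: you correctly isolate the obstruction as the nondeterministic guess of the witness~$m$ in the proof of the Verification Lemma~\ref{l:Verify-Quart}, and you correctly observe that derandomizing Proposition~\ref{p:NTD} would push the reduction chain through to give $\CEP$-hardness of $\ESta_2$. (The paper itself flags exactly this derandomization issue in the Remark following Proposition~\ref{p:NTD}.) Your membership argument is fine and matches the paper's one-line observation. But the hardness half remains entirely programmatic: neither the continued-fraction derandomization route nor the ``large~$k$ gadget'' route you sketch is carried out, and the paper does not claim either can be. So what you have written is a reasonable commentary on why the conjecture is open and where one might attack it, but it should not be labeled a proof proposal.
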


\smallskip

If this holds for all \ts $k\ge 2$, this would imply a
remarkable dichotomy with \ts $k \le 1$ (see Theorem~\ref{t:ESta-1}).
To motivate the conjecture, recall from~$\S$\ref{ss:hist-CC},
that \ts $\CEP$-complete problem \ts $\text{\sc C}_\text{\#3SAT}$ \ts
is \ts $\coNP$-hard.  See \cite{CP23} for more on the complexity
of combinatorial coincidence problems.

Note that the proof of \ts $\ESta_2\notin\PH$ \ts implies
that \ts $\EAF \notin \PH$ \ts even when at most four polytopes are
allowed to be distinct.  It would be interesting to decide if this
number can be reduced down to three.  It is known that two distinct
TU-polytopes are not enough. This follows from a combination
of our argument that for supercritical cases (in the sense of \cite{SvH-acta}),
we have \ts $\EAF\in \coNP$, and an argument that for two polytopes the
equality cases are supercritical.\footnote{Ramon van Handel, personal
communication, April 2023.}

\subsection{Injective proofs}\label{ss:finrem-injective}
In enumerative combinatorics, whenever one has an equality
between the numbers counting certain combinatorial objects, one is tempted to
find a \defng{direct bijection} \ts between the sides, see e.g.\ \cite{Loe11,Pak-part,Sta-EC}.
Similarly, when presented an inequality \ts $f \geqslant g$, one is tempted to
find a \defng{direct injection}, see e.g.\ \cite{Pak,Sta-log-concave}.
In the context of linear extensions, such injections appear throughout
the literature, see e.g.\ \cite{Bre,BT02,CPP-KS,DD,GG22,LP07}.

Typically, a direct injection and its inverse are given by simple
polynomial time algorithms, thus giving a combinatorial interpretation
for the defect \ts $(f-g)$.  Therefore, if a combinatorial inequality
is not in~$\ts\SP$, it is very unlikely that there is a proof by a direct injection.
In particular, Corollary~\ref{c:main-Stanley-not-SP} implies that the Stanley
inequality~\eqref{eq:Sta} most likely cannot be proved by a direct injection.
This confirm an old speculation:


\begin{center}\begin{minipage}{12.cm}%
{{\em ``It appears unlikely that Stanley's Theorem for linear extensions quoted earlier
can be proved using the kind of injection presented here.''} \cite[$\S$4]{DDP}.
}
\end{minipage}\end{center}


\nin
Similarly, Corollary~\ref{c:main-Stanley-not-SP} suggests that the strategy
in \cite[$\S$9.12]{CPP-effective} is unlikely to succeed, at least for \ts
$k\ge 2$.\footnote{In~\cite[p.~129]{Gra}, Graham asked if Stanley's inequality
can be proved using the AD and FKG inequalities.  This seems unlikely,
even though we do not know how to formalize this question.}

To fully appreciate how delicate is Corollary~\ref{c:main-Stanley-not-SP},
compare it with a closely related problem.
It is known that for all \ts $k\ge 0$, the analogue of the Stanley
inequality~\eqref{eq:Sta} holds for the number \ts $\Om(P,t)$ \ts
of \defng{order preserving maps}
\. $X\to [t]$, for all \ts $t \in \nn$.  This was conjectured by Graham
in \cite[p.~129]{Gra82} (see also \cite[p.~233]{Gra}), motivated by the
proof of the XYZ inequality~\cite{She-XYZ} (cf.~$\S$\ref{ss:hist-LE}).
The result was proved in
\cite[Thm~4]{DDP} by a direct injection (see also \cite[$\S$4.2]{Day84}
for additional details of the proof).  In other words, in contrast with
$\ts \phi_k\ts$, the defect of the analogue of \eqref{eq:Sta} for order
preserving maps has a combinatorial interpretation.  Note that it is not
known whether the defect of the XYZ inequality is in \ts $\SP$,
see \cite[Conj.~6.4]{Pak-OPAC}.

\subsection{Stability proofs}\label{ss:finrem-mass-transport}
By analogy with the injective proofs, Corollary~\ref{c:main-AF-stab} suggests
that certain proofs of the Alexandrov--Fenchel inequality are likely not possible.
Here we are thinking of the mass transportation proof of characterization
of the isoperimetric sets given in \cite[App.]{FMP10}, following Gromov's
approach in~\cite{Gromov}.  It would be interesting to make this idea precise.

\subsection{Dichotomy of the equality cases}\label{ss:finrem-eq}
As we discuss in~$\S$\ref{ss:Sta1-critical}, it follows from the results
in~\cite{MS22}, that the equality verification of the Stanley inequality
\eqref{eq:Sta} can be decided in polynomial time for supercritical
posets.  In contrast, by Theorem~\ref{t:main-Sta}, the problem is not in
\ts $\PH$ \ts for critical posets.\footnote{We further clarify this in
our survey \cite[$\S$10]{CP23-survey}, written after this paper. }
We believe that this dichotomy also holds for the equality cases of the
Alexandrov--Fenchel inequality \eqref{eq:AF} for classes
of H-polytopes for which the scaled mixed volume is in~$\ts\SP$.

\smallskip

\subsection{The meaning of it all}\label{ss:finrem-meaning}
Finding the equality conditions of an inequality may seem like a
straightforward unambiguous problem, but the case of the Alexandrov--Fenchel
inequality shows that it is nothing of the kind.
Even the words ``equality conditions'' are much too vague for our
taste.  What the problem asks is a \defng{description of the equality cases}.
But since many geometric and combinatorial inequalities have large families
of equalities cases, the word ``description'' becomes open-ended
(cf.~$\S$\ref{ss:def-term}).  How do you know when you are done?
At what point are you satisfied with the solution and do not need
further details?

These are difficult questions which took many decades to settle, and the
answers depend heavily on the area.  In the context of geometric inequalities
discussed in~$\S$\ref{ss:hist-geom}, the meaning of ``description'' starts
out simple enough.  There is nothing ambiguous about discs as equality
cases of the isoperimetric inequality in the plane~\eqref{eq:Isop},
or pairs of homothetic convex bodies for the Brunn--Minkowski inequality~\eqref{eq:BM},
or circumscribed polygons with given normals for the discrete isoperimetric inequality
(see~$\S$\ref{ss:finrem-discrete-isop}).  Arguably, Bol's equality
cases of~\eqref{eq:Mink-mean} are also unambiguous --- in~$\rr^3$,
you literally know the cap bodies when you see them.

However, when it comes to Minkowski's quadratic inequality \eqref{eq:MQI},
the exact meaning of ``description'' is no longer obvious.  Shenfeld and
van~Handel write ``The main results of this paper will provide a complete
solution to this problem'' \cite{SvH-duke}.  Indeed, their description
of $3$-dimensional triples of convex bodies cannot be easily improved
upon, at least not in the case of convex polytopes (see~$\S$\ref{ss:hist-geom}).
Some questions may still linger, but they are on the structure of the equality
cases rather than on their recognition.\footnote{For example,
one can ask to characterize all possible triples of polytope graphs
that arise as equality cases.}

What Shenfeld and van~Handel did, is finished off the geometric approach
going back to Brunn, Minkowski, Favard, Fenchel, Alexandrov and others,
further formalized by Schneider.  ``Maybe a published conjecture will
stimulate further study of this question'', Schneider wrote in \cite{Schn85}.
This was prophetic, but that conjecture was not the whole story, as it turned out.

In \cite{SvH-acta}, the authors write again:  ``We completely settle
the extremals of the Alexandrov--Fenchel inequality for convex polytopes.''
Unfortunately, their description is extraordinary complicated in higher
dimensions, so the problem of \defng{recognizing} \ts the equality cases
is no longer easy (see~$\S$\ref{ss:hist-AF}).  And what good is a
description if it cannot be used to recognize the equality cases?

In combinatorics, the issue of ``description'' has also been a major problem
for decades, until it was fully resolved with the advent of computational complexity.
For example, consider the following misleadingly simple description: ``Let $G$ be a planar
cubic Hamiltonian graph.''  Is that good enough?  How can you tell if a given
graph~$G$ is as you describe?  We now know that the problem whether $G$ is planar,
cubic and Hamiltonian is \ts $\NP$-complete \cite{GJT}.  But if you only need
the ``planar'' condition, the problem is computationally easy, while the ``cubic'' condition
is trivial.  Consequently, ``planar cubic Hamiltonian'' should not be viewed as a
``good'' description, but if one must consider the whole class of such graphs,
this description is (most likely) the best one can do.

Going over equality cases for various inequalities on the numbers
of linear extensions, already gives an interesting picture.
For the Bj\"orner--Wachs inequality (see~$\S$\ref{ss:hist-LE}),
the recognition problem of forests is in~$\P$, of course.
On the other hand, as we explain in~$\S$\ref{ss:hist-LE}, for the Sidorenko
inequality~\eqref{eq:Sid1}, the recognition problem of series-parallel posets
is in~$\P$ for a more involved reason.  On the opposite end of the spectrum, for the
(rather artificial) inequality
\ts $(e(P)-e(Q))^2\ge 0$,  the equality verification is not
in \ts $\PH$, unless $\PH$ collapses, see~$\S$\ref{ss:hist-combin-int}
and \cite[Thm~1.4]{CP23}.

In this language, for the \ts $k=0$ \ts case of the Stanley inequality~\eqref{eq:Sta},
the description of equality cases given in \cite{SvH-acta} is trivially in~$\P$.
Similarly, for the \ts $k=1$ \ts case, the description of equality cases is
also in~$\P$  by Theorem~\ref{t:ESta-1}.  On the other hand, Theorem~\ref{t:main-Sta}
shows that for \ts $k\ge 2$,  the description in \cite{MS22} is (very likely)
not in~$\P$.  Under standard complexity assumptions, there is no description of the
equality cases in \ts $\P$ \ts at all, or even in \ts $\PH$ \ts for that matter.

Now, the problem of \emph{counting} \ts the equality cases brings a host of
new computational difficulties, making seemingly easy problems appear hard
when formalized, see~\cite{Pak-OPAC}.  Even for counting
non-isomorphic forest posets on $n$ elements, to show that this function
in $\SP$ one needs to define a \defng{canonical labeling} \ts to be able
to distinguish the forests, to make sure each is counted exactly once,
see e.g.~\cite{SW19}.

In this language, Corollary~\ref{c:main-Stanley-not-SP}
states that \emph{there are no combinatorial objects} \ts that can be counted
to give the number of non-equality cases of the Stanley inequality,
neither the non-equality cases themselves nor anything else.  The same applies
to the equality cases. Fundamentally, this
is because you should not be able to efficiently tell if the instances you are
observing are the ones you should be counting.

Back to the Alexandrov--Fenchel inequality~\eqref{eq:AF}, the description of
equality cases by Shenfeld and van Handel is a breakthrough in convex geometry,
and gives a complete solution for a large family of ($n$-tuples of)
convex polytopes (see~$\S$\ref{ss:finrem-eq}).
However, our Theorem~\ref{t:main-AF} says that from the computational point
of view, the equality cases are intractable in full generality.  Colloquially,
this says that \defna{there is no good description} \ts of the equality cases
of the Alexandrov--Fenchel inequality,
unless the world of computational complexity is not what we think it is.
As negative as this may seem, this is what we call a complete solution indeed.


\vskip.6cm

\subsection*{Acknowledgements}
We are grateful to Karim Adiprasito, Sasha Barvinok, K\'aroly B\"or\"oczky,
Christian Ikenmeyer, Jeff Kahn, Joe O'Rourke, Aldo Pratelli,
Matvey Soloviev and Richard Stanley for useful remarks on the subject.
Special thanks to Yair Shenfeld and Ramon van Handel for many
very helpful comments on the first draft of the paper, and to
Greta Panova for the numerous helpful discussions.

An extended abstract
of this paper is to appear in \emph{Proc.\ STOC} (2024); we thank
the Program Committee and the reviewers for helpful comments.
These results were obtained when both authors were visiting the American
Institute of Mathematics at their new location in Pasadena, CA.
We are grateful to AIM for the hospitality.  The first
author was partially supported by the Simons Foundation.
Both authors were partially supported by the~NSF.

}


\vskip1.1cm


{\footnotesize

}

%
%
%

\end{document}